\definecolor{blue}{RGB}{57, 57, 235}
\definecolor{red}{RGB}{165, 0, 33}
\definecolor{green}{RGB}{57, 235, 57}
\newcommand{\refcheckize}[1]{%
  \expandafter\let\csname @@\string#1\endcsname#1%
  \expandafter\DeclareRobustCommand\csname relax\string#1\endcsname[1]{%
    \csname @@\string#1\endcsname{##1}\wrtusdrf{##1}}%
  \expandafter\let\expandafter#1\csname relax\string#1\endcsname
}
\newtheorem{example}{Example}
\newtheorem{theorem}{Theorem}
\newtheorem{proposition}{Proposition}
\newtheorem{remark}{Remark}
\numberwithin{lemma}{section}
\numberwithin{example}{section}
\numberwithin{theorem}{section}
\numberwithin{proposition}{section}
\numberwithin{remark}{section}
\definecolor{refblue}{RGB}{26,13,171}
\definecolor{newblue}{RGB}{13,13,217}
\definecolor{newred}{RGB}{221,13,13}
\let\cedilla\c
\def\b{\bm b}
\def\c{\bm c}
\def\e{\bm e}
\def\u{\bm u}
\def\v{\bm v}
\def\d{\bm d}
\def\q{\bm q}
\def\i{\bm i}
\def\p{\bm p}
\def\x{\bm x}
\def\y{\bm y}
\def\A{\mathcal A}
\def\B{\mathcal B}
\def\L{\mathcal L}
\def\I{\mathcal I}
\def\R{\mathds R}
\def\X{\mathcal X}
\def\P{\mathcal P}
\def\T{\mathcal T}
\def\z{\bm z}
\newcommand{\set}[2]{\left\{ #1 \mid  #2 \right\}}
\newcommand{\eqdef}{:=}
\newcommand{\G}{\mathcal{G}}
\newcommand{\F}{\mathcal{F}}
\DeclareMathOperator{\argmin}{arg \, min}
\newcommand{\dd}{\mathop{}\!{\rm d}}
\title{\Large \bf An efficient second-order cone programming approach for dynamic optimal transport on staggered grid discretization\thanks{This work was supported by
the National Key R \& D Program of China (No. 2021YFA001300),
the National Natural Science Foundation of China (No. 12271150),
the Hunan Provincial Natural Science Foundation of China (No. 2023JJ10001), and 
the Science and Technology Innovation Program of Hunan Province (No. 2022RC1190).}
\thanks{The code (MATLAB/C++) of the software is available at GitHub (\url{https://github.com/chlhnu/DOT-SOCP}).}
}
\author{
Liang Chen\thanks{School of Mathematics, Hunan University, Changsha, 410082, China
(\url{chl@hnu.edu.cn}).} 
\quad  
Youyicun Lin\thanks{School of Mathematics, Hunan University, Changsha, 410082, China
    (\url{linyouyicun@hnu.edu.cn}).}
\quad 
Yuxuan Zhou\thanks{Department of Mathematics, Southern University of Science and Technology, Shenzhen, 518055, China            (\url{zhouyx8@mail.sustech.edu.cn}).}}
\date{May 8, 2025}
\begin{document}
\maketitle

\begin{abstract}
This paper proposes an efficient numerical optimization method based on second-order cone programming (SOCP) to solve dynamic optimal transport (DOT) problems with quadratic costs on staggered grid discretizations.
By properly reformulating the discretized DOT problem into an equivalent linear SOCP, we develop a highly efficient implementation of an inexact decomposition-based proximal augmented Lagrangian method to solve it. 
The proposed approach is provided as an open-source software package to facilitate reproducibility and further research. 
Numerical experiments on a diverse range of DOT problems demonstrate that our software significantly outperforms several state-of-the-art solvers in terms of both accuracy and computational efficiency. Furthermore, it exhibits robust performance when handling measures that are not strictly positive or are in irregular domains with obstacles.

\bigskip
\noindent
{\bf Keywords:}
Dynamic optimal transport,
Second-order cone programming,
Staggered grid discretization,
Proximal augmented Lagrangian method,
MATLAB software package

\medskip
\noindent 
{\bf MSCcodes:}
90C25, 90C06, 90-04, 65K05, 49Q22
\end{abstract}

	\section{Introduction} 
Initiated by Monge \cite{monge} in 1781, optimal transport (OT) has evolved into a well-established mathematical area, and significantly advanced through Kantorovich's relaxation \cite{Kantorovich42}. 
The (Monge-Kantorovich) OT problem seeks the minimum cost to transport the mass between two probability measures \cite{villani}.
In recent years, it has found extensive applications across various fields, including signal processing \cite{flamary16,tart16}, economics \cite{galichon16,wrd25}, and machine learning \cite{peyre}.

In the seminal work of Benamou and Brenier \cite{bb00}, OT was studied within a continuum mechanics framework.
According to \cite[Proposition 1.1]{bb00}, given two probability measures $\rho_0$ and $\rho_1$ over a spatial domain $X\subseteq{\mathds R}^d$, the $L^2$ Kantorovich-Wasserstein distance, i.e., the optimal value of the OT problem under a quadratic distance cost, is the same as the optimal value of the following \emph{dynamic optimal transport} (DOT) problem 
\begin{equation}
	\label{eq:opt-ene}
	\min_{\rho,\bm{v}}
	\left\{
	\int_0^1\int_{X} \frac{1}{2}\|\bm{v}(t,\cdot)\|^2 \dd\rho(t,\cdot) \dd t
	\ \Big\vert\ 
	\begin{array}{ll}
		\partial_t\rho+\mathrm{div}_{\x}(\rho\bm{v})=0,  \\
		\rho(0,\cdot)= \rho_0,  \rho(1,\cdot)= \rho_1 
	\end{array}
	\right\},
\end{equation}
where $\rho(t,\cdot)$ denotes a time-varying probability measure on $X$, and $\bm{v}(t,\cdot)$ is a Borel velocity field on $X$ such that $\int_0^1\int_{X} \frac{1}{2}\|\bm{v}(t,\cdot)\| \dd\rho(t,\cdot) \dd t<+\infty$.
The operator $\mathrm{div}_{\x}$ denotes the divergence with respect to the spatial variable $\x$. 
Here, the continuity equation $\partial_t\rho+\mathrm{div}_{\x}(\rho\bm{v})=0$ in \eqref{eq:opt-ene} holds in the weak sense.
The DOT problem \eqref{eq:opt-ene} aims to minimize the time-space integral of the kinetic energy, subject to the constraint of the continuity equation (including no-flux boundary conditions $\rho(t,\cdot)\bm{v}(t,\cdot)\cdot \bm{n}_{x} = 0$ on $\partial X$) and prescribed temporal boundary conditions. 
From a geometric perspective, solving \eqref{eq:opt-ene} is equivalent to computing a geodesic in the Wasserstein space with the optimal solution $\rho(t,\cdot)$ being the shortest curve from $\rho_0$ to $\rho_1$.
Moreover, DOT also facilitates the extension of OT to related research areas such as mean-field games \cite{bb15,bonnans23}.

As highlighted in \cite{jose22}, a key advantage of \eqref{eq:opt-ene} over the classical Monge-Kantorovich OT problem is that it leads to a discretized optimization problem with $N_t\times N^D_{\x}$ variables, where $N_{\x}$ and $N_t$ represent the number of spatial and temporal grid points, respectively, and $D$ is the spatial dimension.
In contrast, the classical (entropy-regularized) linear programming approach results in an optimization problem with $N_{\x}^{2D}$ variables. 
However, the DOT problem \eqref{eq:opt-ene} is nonlinear and nonconvex, posing significant challenges for numerical computation. 
Fortunately, this issue was alleviated in \cite{bb00} by introducing the vector measure $\bm{m}$ to take the place of $\rho\bm{v}$, transforming the continuity equation into the linear constraint $\partial_t\rho+\mathrm{div}_{\x}(\bm{m})=0$. 
Consequently, the DOT problem \eqref{eq:opt-ene} can be reformulated as the convex optimization problem
\begin{equation}
	\label{eq:opt-ene-bb}
	\min_{\rho,\bm{m}}\left\{\mathfrak{B}(\rho,\bm{m})  \mid \partial_t\rho+\mathrm{div}_{\x}(\bm{m})=0,\
	\rho(0,\cdot)= \rho_0, \ \rho(1,\cdot)= \rho_1
	\right\},
\end{equation}
where $\mathfrak{B}(\rho,\bm{m})$ is the Benamou-Brenier functional, which is a convex function (see \Cref{convexdot} for the detailed definition and derivation).  

To solve \eqref{eq:opt-ene-bb}, several numerical methods have been developed. 
The original work of Benamou and Brenier \cite{bb00} 
focused on the dual problem of \eqref{eq:opt-ene-bb} and applied the alternating direction method of multipliers (ADMM), which was commonly known as ALG2 \cite{glowinski75,gabay76}. 
Their approach used a centered finite-difference grid in the spatial domain.
Later, by treating the algorithm in \cite{bb00} as the Douglas-Rachford splitting method, Papadakis et al. \cite{papa14} proposed solving \eqref{eq:opt-ene-bb} via other proximal splitting schemes and further took advantage of the staggered grid discretization. 
Compared to centered grids, staggered grids help avoid the odd-even decoupling problem that occurs with centered grids, enhancing the stability of discretized models \cite[Section 8]{cfd}.
More recently, Yu et al. \cite{yjj24} applied the accelerated proximal gradient method \cite{fista}, also known as the fast iterative soft threshold algorithm (FISTA), combined with staggered grids in the time-space domain and the multilevel initialization technique of \cite{ljl21}. 
The corresponding numerical results suggest that the multilevel-based FISTA is several times faster than the other approaches with which it was compared. 
Although ADMM and proximal splitting schemes are efficient for discrete DOT problems of moderate size, their numerical performance struggles with large discrete grids, e.g., a $64\times 256\times 256$ staggered grid in a $2$-dimensional spatial space. 
The multilevel-based FISTA provides more scalability for large-scale discrete problems but exhibits instability when one of the probability densities $\rho_0$ and $\rho_1$ is not strictly positive.
Consequently, it remains a challenging problem to solve large-scale discrete DOT problems efficiently.

Generally, the computational efficiency of first-order algorithms for discretized DOT problems on large-scale grids is primarily hindered by two challenges: the significant time consumption of each iteration and the excessive number of iterations required to achieve a desired level of accuracy.
Discretizing the DOT problem on staggered grids yields a convex optimization problem involving millions, or even tens of millions, of variables. Consequently, each iteration becomes computationally prohibitive unless specialized techniques are employed to mitigate the cost. 
It was observed in \cite[Section 4]{papa14} that proximal splitting algorithms typically necessitate solving three distinct subproblems: a Poisson equation on the centered grid, a series of cubic equations to find real roots, and a linear system involving the interpolation matrix.
While the Poisson equation can be solved efficiently via the fast Fourier transform (FFT), the latter two subproblems remain computationally demanding despite their structural simplicity. 
Specifically, existing literature often employs Newton's method to resolve the cubic equations for efficiency \cite{bb00,papa14,bb15}; however, as noted in \cite{papa14}, this can lead to numerical oscillations. 
To bypass these two types of subproblems to improve per-iteration speed, FISTA was used in the software of \cite{yjj24}, which performs better than many other algorithms when densities have a positive lower bound. 
Nevertheless, it generally suffers from requiring more iterations to reach the target precision, especially when the positive lower bound is not available. 
As demonstrated by our numerical experiments in \Cref{subsec:num}, existing software packages encounter significant difficulties in maintaining both high efficiency and precision.

The primary objective of this paper is to overcome these obstacles in applying first-order algorithms to DOT problems on staggered grid discretization. 
To achieve this, we propose a novel numerical optimization approach based on \emph{second-order cone programming} (SOCP) \cite{monteiro00,soc,chensocpreview2025} to efficiently solve these problems. 
Specifically, by analyzing the structure of the dual problem of the discretized DOT problem, we separate the coupled variables and reformulate the problem as a linear SOCP.
This reformulation eliminates the interpolation matrices, thus avoiding the linear systems induced by interpolation.
For the linear SOCP reformulation, we establish the existence of solutions to its Karush-Kuhn-Tucker (KKT) system without requiring additional assumptions, which facilitates the convergence analysis of primal-dual-type algorithms. 
Moreover, when ADMM-type algorithms are implemented for this reformulated problem, the task of solving cubic equations is converted into computing projections on second-order cones,  which significantly improves efficiency (see \Cref{tab:compareProj} for a comparison of solving cubic equations with computing their projection counterparts).
To further take advantage of the SOCP reformulation, we propose to solve it using an inexact decomposition-based proximal \emph{augmented Lagrangian method} (ALM), which is the culmination of a series of works \cite{hestenes,powell,rock-alm,lxd16,cl21} to solve constrained convex optimization problems. 
Principally, it resembles the classic ADMM.
However, a larger dual step length in the interval $(0,2)$ is allowed, akin to the ALM, as guaranteed by the convergence analysis, to achieve over-relaxation of the multiplier updating to extract more efficiency.
In the numerical implementation, we dynamically adjust the penalty parameters so that the proposed algorithm does not rely on parameter tuning and remains efficient even when a high-quality initial solution is unavailable.
Furthermore, inspired by recent work \cite{ljl21,yjj24}, we employ multilevel methods by successively refining the grids. 
Finally, we conducted extensive numerical experiments on the proposed approach to verify its efficiency and robustness, and made it an open-source software package in both MATLAB (with C++ MEX subroutines) and Python. 
Our numerical results demonstrate that the proposed approach is generally at least $10$ times faster than other software packages to achieve a relative error of $10^{-4}$ on the residual of the KKT system. 
Moreover, it performs effectively and efficiently for DOT problems without strictly positive probability measures, even in irregular domains, or Dirac measures.  

The remaining parts of this paper are organized as follows. 
In \Cref{sec:pre}, we present the notation and preliminaries, including a brief overview of the DOT problem and its discretization on staggered grids. 
In \Cref{sec:socp}, we reformulate the discretized DOT problem as a linear SOCP in which the coupled variables are properly separated into a more tractable formulation. 
Moreover, we show that such a reformulation is equivalent to the original discretized DOT problem without requiring any additional assumptions. 
In \Cref{sec:alg}, we introduce an inexact proximal ALM for solving the reformulated SOCP, where each step can be solved with extremely low computational complexity. 
In \Cref{sec:numerical}, we discuss computational details and conduct numerical experiments to illustrate the efficiency and robustness of the proposed approach.
We conclude this paper in \Cref{sec:conclu}.

\section{Notation and preliminaries}
\label{sec:pre}
We use $\mathds{R}^n$ to represent the $n$-dimensional real Euclidean space, and use $\mathds{R}_+^n$ ($\mathds{R}_-^n$) to represent the non-negative (non-positive) orthant in $\mathds{R}^n$.
We denote by $\|\x\|$ the Euclidean norm of $\x$.
For a vector $\x\in \mathds{R}^n$, 
$|\x|^2$ denotes the vector obtained by squaring each component of $\x$; 
that is, $\left(|\x|^2\right)_i = (x_i)^2$. 
For two vectors $\x$ and $\y$, we often use $(\x;\y)$ to represent $(\x^\top,\y^\top)^\top$ for convenience. 
We use $\bm{1}_n$ and $\bm{0}_n$ to denote the vectors in $\mathds{R}^n$ whose entries are all ones and all zeros, respectively.

For a vector $\y\in \mathds{R}^{1+n}$, 
we can use $ y_0$ to denote the first component of ${\y}$ and define
$\bar {\y}:=(y_1,\ldots,y_{n})^\top$, which is the subvector of $\y$ without its first component.
Then, for simplicity, we can denote $\y=( y_0;\bar\y)$. 
Let $\mathds{K}_{\mathrm{soc}}:=\{\y\in\mathds{R}^{1+n}\mid  y_0\geq \|\bar{\bm\y}\|\}$
be the second-order cone in $\mathds{R}^{1+n}$, 
which is a closed convex cone. 
According to \cite[Proposition 3.3]{fukushima}, 
the projection $\operatorname{\Pi}_{\mathds{K}_{\mathrm{soc}}}(\y)$ of $\y=( y_0;\bar\y)\in \mathds{R}^{1+n}$  onto the second-order cone $\mathds{K}_{\mathrm{soc}}$ is given by
\begin{equation}
	\label{app-proj}
	\operatorname{\Pi}_{\mathds{K}_{\mathrm{soc}}}(\y) = \begin{cases}
		\y & \mbox{if } \|\bar\y\|\leq  y_0,\\
		0 &  \mbox{if } \|\bar\y\|\leq - y_0,\\
		\frac{1}{2}( y_0+\|\bar\y\|)\left(1;\frac{\bar\y}{\|\bar\y\|}\right) & \mbox{otherwise}.
	\end{cases}
\end{equation}

Let $\Omega:=[0,1]\times X \subset \mathds{R}\times\mathds{R}^D$ be a compact convex time-space subset. 
Denote $\mathbb{M}(\Omega)$ (respectively, $\mathbb{M}(\Omega;\mathds{R}^D)$) as the space of finite signed measures (respectively, finite signed vector measures) on $\Omega$. The subset $\mathbb{M}_+(\Omega)\subset \mathbb{M}(\Omega)$ represents the set of non-negative finite measures. $C(\Omega)$ is the space of continuous functions over $\Omega$, and $C(\Omega;\mathds{R}^D)$ represents the space of continuous vector fields over $\Omega$. Given that $\Omega$ is compact, $\mathbb{M}(\Omega)$ and $\mathbb{M}(\Omega;\mathds{R}^D)$ are the dual spaces of $C(\Omega)$ and $C(\Omega;\mathds{R}^D)$, respectively. 
For convenience, we abbreviate $\mathbb{M}(\Omega;\mathds{R}^D)$ and $C(\Omega;\mathds{R}^D)$ as $\mathbb{M}(\Omega)^D$ and $C(\Omega)^D$. 
The operator $\partial_t$ represents the partial derivative with respect to time,
$\nabla_{\x}$  denotes the gradient concerning the spatial variables,
and $\mathrm{div}_{\x}$ refers to the divergence concerning the spatial variables.
For convenience, we denote $\nabla_{t,\x} := (\partial_t,\nabla_{\x})$.

\subsection{Convex programming reformulation of the DOT problem}
\label{convexdot}
We briefly introduce the tractable convex reformulation of the DOT problem \eqref{eq:opt-ene} proposed in \cite{bb00} following the 
presentation in \cite[Section 5]{Santamot}.
The Benamou-Brenier functional $\mathfrak{B}\colon \mathbb{M}(\Omega)\times\mathbb{M}(\Omega)^D \rightarrow [0,+\infty]$ is defined by
$$
\mathfrak{B}(\rho,\bm{m}):=\sup_{a, \bm{b}}\Big\{\int_\Omega a(t,\x)\dd\rho +\int_\Omega \bm{b}(t,\x)\cdot\dd\bm{m}\mid(a,\bm{b})\in  \P\Big\},
$$
where the closed convex set $\P$ is defined by
\begin{equation}
	\label{defcp}
	\P:=\Big\{(a,\bm{b}) \in  C(\Omega)\times C(\Omega)^D\mid
	a(t, \x)+\frac{\|\bm{b}(t, \x)\|^2}{2} \leqslant 0\quad 
	\forall(t, \x) \in  \Omega
	\Big\}. 
\end{equation}
By definition, $\mathfrak{B}(\rho, \bm{m})$ is convex and lower semicontinuous (for the weak convergence). 
According to \cite[Proposition 5.18]{Santamot}, 
$\mathfrak{B}(\rho, \bm{m})<+\infty$ implies $\rho\in \mathbb{M}_+(\Omega)$ and $\bm{m}\ll \rho$, i.e., $\bm{m}$ is absolutely continuous with respect to $\rho$ and, in this case, there exists a density $\bm{v}$ with respect to $\rho$ such that $\bm{m}=\rho\bm{v}$ and $\mathfrak{B}(\rho, \bm{m})=\int_\Omega \frac{1}{2}\|\bm{v}\|^2\dd\rho$.
Consequently, the DOT problem \eqref{eq:opt-ene} can be equivalently recast as the convex optimization problem \eqref{eq:opt-ene-bb}.

Introducing the continuously differentiable test function $\phi\in  C^1(\Omega)$ as the dual variable, 
the dual problem of \eqref{eq:opt-ene-bb} is given (see \cite{hugo20} and \cite[Section 6]{Santamot} for details) by
\begin{equation}\label{eq:opt-dual}
	\max\limits_{\phi,\bm{p}}
	\big    \{
	G(\phi)-\delta_{\P}(\bm{p})
	\mid 
	\nabla_{t,\x}\phi-\bm{p}={\bm{0}}
	\big\}, 
\end{equation}
where 
$G(\phi):=\int_X\phi(1,\cdot)\dd\rho_1
-\int_X\phi(0,\cdot)\dd\rho_0$ is obtained from the boundary conditions of \eqref{eq:opt-ene-bb},
$\bm{p}:=(a,\bm{b})\in  C(\Omega)\times C(\Omega)^D$, and 
$\delta_{\P}(\bm{p})$ is the indicator function of the closed convex set $\P$ defined by \eqref{defcp}.

\subsection{Discretized DOT problem on staggered grid}
\label{subsec:dis}

To discretize the problem \eqref{eq:opt-dual}, we adopt the staggered finite difference framework, following the schemes utilized in \cite{papa14,yjj24}.
Without loss of generality, we set $X=[0,1]^D$ for notational simplicity. 
The domain $\Omega = [0,1]\times X$ can then be uniformly partitioned by dividing $\Omega$ into $n_d$ segments in each dimension, resulting in a step size of $h_d = 1/n_d$ for $d = 0, \ldots, D$. 
In the $d$-th dimension, the index sets $J_d^c$ for the centered grids, and $J_d^s$ for the staggered grids are given by
$$
\begin{cases}
	J_d^c \eqdef \{j\mid j \mbox{ is an integer, }0\leq j\leq n_d\}.
	\\
	J_d^s \eqdef \set{(j-1/2)}{j \mbox{ is an integer, }1\leq j\leq n_d}.
\end{cases}
$$
Then, the index sets of the centered grids for discretizing $X$ and $\Omega$ are the Cartesian products
$$
\G^c_X \eqdef J^c_1 \times \cdots \times J^c_D
\quad
\mbox{and}
\quad 
\G^c \eqdef J_0^c \times\G^c_X, 
$$
respectively. To achieve a more accurate and convenient discrete format of partial derivatives concerning the $d$-th component, consider the index set of the staggered grid given by
$$
\G^s_d \eqdef J_0^c \times \cdots \times J_{d-1}^c \times J_{d}^s \times J_{d+1}^c\cdots \times J_D^c. 
$$
For a vector whose components are assigned to the mesh points in the staggered grid, 
we use the index $\i = (i_0,\ldots,i_D)$ with $i_d\in J_d^c\cup J_d^s$  to specify the positions of these components.  Accordingly, the unknown function $\phi$ is represented on the centered grid by the vector
$$\bm{\varphi}:=(\varphi_{\i})_{\i \in  \G^c}\in \mathds{R}^{|\mathcal{G}^c|}:=\mathds{R}^{(n_0+1)\times\cdots\times(n_D+1)}.$$
Define $\mathds{R}^{|\G^s_d|}:=\mathds{R}^{{(n_0+1)}\times\cdots\times{(n_{d-1}+1)}\times{n_d}\times{(n_{d+1}+1)}\times\cdots\times{(n_D+1)}}$. Let $\e_d\in\mathds{R}^{D+1}$, $d=0,\ldots, D$ be the standard basis vectors such that the $(d+1)$-th component of $\e_d$ is $1$, and all the other components are $0$. 
Then, one can define the discrete gradient operator
\begin{equation}
	\label{Amap}
	\A \bm{\varphi} := \left(\mathcal{A}_0\bm{\varphi};\mathcal{A}_1\bm{\varphi};\ldots; \mathcal{A}_D\bm{\varphi}\right)\in  \mathds{R}^{|\mathcal{G}_0^s|}\times\mathds{R}^{|\mathcal{G}_1^s|}\times\cdots\times \mathds{R}^{|\mathcal{G}_D^s|}, 
\end{equation}
where
$(\mathcal{A}_d\bm{\varphi})_{\i}:= 
\frac{1}{h_d}(\varphi_{\i+\frac{\e_d}{2}}
-\varphi_{\i-\frac{\e_d}{2}})\,$ $\forall \i\in \G^s_d$. 
By definition, one has 
\begin{equation}
	\label{eq:kerA}
	\mathrm{ker}(\A) = \{ \gamma\bm{1}_{|\G^c|}\in \mathds{R}^{|\G^c|}\mid\ \gamma\in \mathds{R}\}.
\end{equation}
To adequately represent the set $\mathcal{P}$ in \eqref{defcp}, we utilize the interpolation transformations to align variables onto a unified grid.
Specifically, we define the linear interpolations
$\mathcal{L}_{\T}\colon \mathds{R}^{|\mathcal{G}^c|}\rightarrow  \mathds{R}^{|\mathcal{G}^s_0|}$ 
and
$\mathcal{L}_{\X}^d \colon \mathds{R}^{|\mathcal{G}^c|}\rightarrow  \mathds{R}^{|\mathcal{G}^s_d|}$ for $d=1,\ldots,D$
defined by 
\begin{equation}
	\begin{cases}
		\left(\mathcal{L}_{\T}\y\right)_{\i} := \frac{y_{\i-\frac{\e_0}{2}}+\, y_{\i+\frac{\e_0}{2}}}{2}  & \forall\, \i\in\G^s_0, 
		\\[1mm]
		\left( \mathcal{L}^d_{\X}\y\right)_{\i} := \frac{y_{\i-\frac{\e_d}{2}}+\, y_{\i+\frac{\e_d}{2}}}{2} & \forall\, \i\in \G^s_d,
		\label{eq:ltlx}
	\end{cases}
\end{equation}
which compute the averages between adjacent elements corresponding to the temporal and spatial indices, respectively. 
Moreover, the adjoints of these operators are denoted by $ \mathcal{L}_{\T}^*$ and $ (\mathcal{L}_{\X}^d)^*$, respectively. 
Then, for any given vector $\v=(\v_1;\ldots;\v_D)\in  \mathds{R}^{|\mathcal{G}_1^s|}\times\cdots\times \mathds{R}^{| \mathcal{G}_D^s|}$, we define 
$\mathcal{L}_{\X}^*\v:=(\mathcal{L}_{\X}^1)^*\v_1+\ldots+(\mathcal{L}_{\X}^D)^*\v_D$ as the adjoint linear operator of  $\mathcal{L}_{\X}$. 
For sampling the two measures $\rho_0$ and $\rho_1$ in \eqref{eq:opt-ene-bb},
we define the hypercube centered at each $\bm{ \bar i}=(i_1,\ldots,i_D)\in \G^c_X$ by 
$$
X_{\bm{\bar i}}:=\big\{(y_1,\ldots,y_D)\in X\mid |i_dh_d-y_d|\leq \frac{h_d}{2},  d=1,\ldots,D
\big\}.
$$ 
Then we define the vector $\bm{c}\in \mathds{R}^{|\G^c|}$ by 
\begin{equation}
	\label{eq:defc}
	\c_{\i}:=\begin{cases}
		\rho_0 (X_{\bm{\bar{i}}}) & \text{ if } i_0 = 0,\\
		-\rho_1(X_{\bm{\bar{i}}})  & \text{ if } i_0=n_0, \\
		0                  & \mbox{ otherwise}
	\end{cases}
	\qquad \forall\, \i=(i_0,\bm{\bar \i})\in\G^c.
\end{equation}
Based on the above definitions, the discretized version of problem \eqref{eq:opt-dual} under the staggered grid is given as follows:
\begin{equation}
	\label{eq:opt-disdualori}
	\min\limits_{\bm{\varphi} , \q}
	\big\{
	\langle \c,\bm{\varphi} \rangle 
	\mid 
	\A \bm{\varphi} - \q=\bm{0},\  \bm{q}\in  \mathds{P}
	\big\}, 
\end{equation}
where the variable 
$\q:=(\q_0;\ldots ;\q_D)\in  \mathds{R}^{|\mathcal{G}_0^s|}\times\cdots\times \mathds{R}^{| \mathcal{G}_D^s|}$ 
is the discretization of $\bm{p}$, 
and the set $\mathds{P}$ represents the discretized constraints from $\P$ given by
\begin{equation}
	\label{defp}
	\begin{array}{ll}
		\mathds{P}:= 
		\Big\{
		\q = (\q_0; \ldots; \q_D) \mid  
		\q_d \in  \mathds{R}^{|\G^s_d|},
		\\[1mm]
		\qquad 
		~\qquad 
		(\q_0)_{\i}+\frac{1}{2}\left( \mathcal{L}_{\T}\mathcal{L}_{\X}^*(|\q_1|^2;\ldots ;|\q_D|^2)\right)_{\i}\leq 0\
		\forall\, \i\in \G^s_0
		\Big\}.
	\end{array}    
\end{equation}

We make the following remark regarding the interpolation operators $\mathcal{L}_{\T}$ and $\mathcal{L}_{\X}^*$ in the discretization. 
\begin{remark}
	In the context of finite element and finite volume methods, interpolation operators (referred to as ``weighted means'') arise naturally during discretization \cite{hugo18,nata21}, typically introducing computational redundancies. 
	In finite difference schemes, while centered discretization can avoid such interpolations, it may introduce oscillations into the optimization problem due to the constraints being solely first-order and a lack of mutual constraints between adjacent grid points.
	In contrast, staggered grids with interpolation operators help mitigate the odd-even decoupling commonly found in regular centered grids, thereby enhancing the stability of discretized models \cite[Section 8]{cfd}. 
\end{remark}

\section{The SOCP reformulation}
\label{sec:socp}
In this section, we propose an equivalent SOCP reformulation of the discretized dual DOT problem \eqref{eq:opt-disdualori}. 
We begin by conducting a theoretical study to establish intrinsic properties that can be exploited in algorithm design. 

\subsection{Properties of the discretized DOT problem}
By introducing the dual variable 
${\bm{\Lambda}=(\bm{\Lambda}_0; \bar{\bm\Lambda};\bm{\Lambda}_q)}
\in  
\mathds{R}^{|\mathcal{G}_0^s|}\times \mathds{R}^{|\mathcal{G}_1^s|+\cdots + | \mathcal{G}_D^s|}\times \mathds{R}^{|\mathcal{G}_0^s|}$, 
the Lagrangian function of \eqref{eq:opt-disdualori} is defined by
\begin{equation*}
	\begin{array}{ll}
		\displaystyle
		\mathfrak{L}(\bm{\varphi},\bm{q};\bm{\Lambda}):= &\langle \c,\bm{\varphi}\rangle
		+\langle \mathcal{A}\bm{\varphi} -\bm{q},(\bm{\Lambda}_0;\bar{\bm\Lambda})\rangle 
		\\[1mm]
		\displaystyle
		&\qquad-\langle \bm{q}_0+\frac{1}{2}\mathcal{L}_{\T}\mathcal{L}_{\X}^*(|\bm{q}_1|^2;\ldots  ;|\bm{q}_D|^2),\bm{\Lambda}_q\rangle,    
	\end{array}
\end{equation*}
where we write $\q \equiv (\q_0;\bar\q)$ for convenience. 
Therefore, by direct calculation, one has 
$$
\begin{array}{ll}
	\inf\limits_{\bm{\varphi},\bm{q}}
	\mathfrak{L}(\bm{\varphi},\bm{q};\bm{\Lambda})
	\\[2mm]
	= 
	\inf\limits_{\bm{\varphi},\bm{q}}\,
	\langle \c+ \mathcal{A}^* (\bm{\Lambda}_0;\bar{\bm\Lambda}) ,\bm{\varphi}\rangle
	-\langle \bm{q}_0, \bm{\Lambda}_0 +\bm{\Lambda}_q\rangle
	-\langle \bar{\bm q},  \bar{\bm\Lambda}\rangle 
	-\frac{1}{2} \sum\limits_{d=1}^D\langle |\q_d|^2, \mathcal{L}_{\X}^d\mathcal{L}_{\T}^*\bm{\Lambda}_q\rangle
	\\
	=
	\begin{cases}
		\inf\limits_{\bar{\bm q}}
		\frac{1}{2} \sum\limits_{d=1}^D\langle |\q_d|^2, \mathcal{L}_{\X}^d\mathcal{L}_{\T}^*\bm{\Lambda}_0\rangle 
		-\langle \bar{\bm q},  \bar{\bm\Lambda}\rangle 
		&
		\mbox{if } 
		\left\{\begin{array}{ll}
			\c+ \mathcal{A}^* (\bm{\Lambda}_0;\bar{\bm\Lambda})=0,\\ 
			\bm{\Lambda}_0 +\bm{\Lambda}_q=0,
			\mathcal{L}_{\X}\mathcal{L}_{\T}^*\bm{\Lambda}_q\le 0,    
		\end{array}\right.
		\\[2mm]
		-\infty &\mbox{otherwise,}
	\end{cases}
	\\[7mm]
	=
	\begin{cases}
		-\frac{1}{2}
		\sum\limits_{d=1}^D
		\sum\limits_{\i\in\overline \G^s_d}
		\frac{(\bar{\bm\Lambda}_d)_{\i}^2}{(\mathcal{L}^d_{\X}\mathcal{L}_{\T}^*\bm{\Lambda}_0)_{\i}}
		&
		\mbox{if } 
		\left\{\begin{array}{ll}
			\c+ \mathcal{A}^* (\bm{\Lambda}_0;\bar{\bm\Lambda})=0,\\ 
			\bm{\Lambda}_0 +\bm{\Lambda}_q=0,
			\mathcal{L}_{\X}\mathcal{L}_{\T}^*\bm{\Lambda}_0\ge 0,
			\\
			\mbox{and }
			(\bar{\bm\Lambda}_d)_{\i}=0 \mbox{ if } 
			(\mathcal{L}^d_{\X}\mathcal{L}_{\T}^*\bm{\Lambda}_0)_{\i}=0, 
		\end{array}\right.
		\\[2mm]
		-\infty &\mbox{ otherwise},
	\end{cases}
\end{array}
$$
where 
$\overline \G^s_d:=\{ \i\in \G^s_d\mid (\mathcal{L}^d_{\X}\mathcal{L}_{\T}^*\bm{\Lambda}_0)_{\i}>0\}$. 
Then the dual problem to \eqref{eq:opt-disdualori} is given by
\begin{equation}
	\label{eq:kkt-dualy}
	\max\limits_{\bm{\Lambda}_0,\bar{\bm\Lambda}}
	\Big\{-\frac{1}{2}
	\sum\limits_{d=1}^D
	\sum\limits_{\i\in\overline \G^s_d}
	\frac{(\bar{\bm\Lambda}_d)_{\i}^2}{(\mathcal{L}^d_{\X}\mathcal{L}_{\T}^*\bm{\Lambda}_0)_{\i}}
	\ \Big\vert \  
	\c+ \mathcal{A}^* (\bm{\Lambda}_0;\bar{\bm\Lambda})=0,  
	\ 
	(\bm{\Lambda}_0,\bar{\bm\Lambda})\in \mathcal{C}
	\Big\},
\end{equation}
where $\mathcal{C}:=\Big\{(\bm{\Lambda}_0,\bar{\bm\Lambda})\mid 
\bm{\Lambda}_0\ge 0, 
\mbox{and } 
(\bar{\bm\Lambda}_d)_{\i}=0 \mbox{ if } 
(\mathcal{L}^d_{\X}\mathcal{L}_{\T}^*\bm{\Lambda}_0)_{\i}=0, \ \i\in\G^s_d
\Big\}$.
Meanwhile, the KKT system of \eqref{eq:opt-disdualori} is given by
\begin{equation}
	\begin{cases}
		\label{eq:kkt-dual}
		\mathcal{A}\bm{\varphi}=\bm{q}, \ 
		\mathcal{A}^*(\bm{\Lambda}_0;\bar{\bm\Lambda})+\c=0,\ 
		\bm{\Lambda}_0 +\bm{\Lambda}_q=0,  
		\\
		0 \leq \bm{\Lambda}_0 \perp \bm{q}_0+\frac{1}{2}\mathcal{L}_{\T}\mathcal{L}_{\X}^*(|\bm{q}_1|^2; \ldots ;|\bm{q}_D|^2) \leq 0,\\
		\bar{\bm\Lambda}_d = \left(\mathcal{L}_{\X}^d \mathcal{L}^*_{\T}\bm{\Lambda}_0\right) \odot \bm{q}_d, \ d=1,\ldots, D,
	\end{cases}
\end{equation} 
where $\odot$ denotes the Hadamard (element-wise) product.
Based on the duality framework above, we have the following result on the existence of optimal solutions and Lagrange multipliers to \eqref{eq:opt-disdualori}. 
In particular, this result does not require any additional assumptions.
\begin{proposition}
	\label{prop:range}
	The solution set to the KKT system \eqref{eq:kkt-dual} is nonempty, i.e., 
	the solution sets to both \eqref{eq:opt-disdualori} and 
	\eqref{eq:kkt-dualy} are nonempty.
\end{proposition}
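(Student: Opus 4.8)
\section*{Proof proposal for Proposition \ref{prop:range}}

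The plan is to establish the nonemptiness of the KKT solution set by showing, separately, that the convex primal problem \eqref{eq:opt-disdualori} attains its minimum and that a dual optimal solution of \eqref{eq:kkt-dualy} exists, and then assembling these into a KKT triple via strong duality. First I would record the mass-balance identity $\langle\c,\bm{1}_{|\G^c|}\rangle=0$, which follows directly from the normalization in \eqref{eq:defc}: the entries of $\c$ with $i_0=0$ sum to $1/h_X$, those with $i_0=n_0$ sum to $-1/h_X$, and all other entries vanish. Combined with the characterization $\ker(\A)=\{\kappa\bm{1}_{|\G^c|}\mid\kappa\in\mathds{R}\}$ in \eqref{eq:kerA}, this yields $\c\in\ker(\A)^{\perp}=\mathrm{range}(\A^*)$. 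Consequently the stationarity equation $\A^*(\bm{\Lambda}_0;\bar{\bm\Lambda})+\c=0$ of \eqref{eq:kkt-dual} is solvable, and $\langle\c,\bm{\varphi}\rangle$ is invariant under $\bm{\varphi}\mapsto\bm{\varphi}+\kappa\bm{1}_{|\G^c|}$, so the primal can be analyzed on the subspace $\bm{\varphi}\perp\bm{1}_{|\G^c|}$.

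Next I would verify a Slater-type condition, which here is independent of the data $\c$: taking $\bm{\varphi}$ constant in space and strictly decreasing in time, e.g. $\varphi_{\i}=-i_0$, gives $\mathcal{A}_d\bm{\varphi}=0$ for $d\ge1$ and $\mathcal{A}_0\bm{\varphi}=-\tfrac{1}{h_0}\bm{1}<0$ componentwise, so the defining inequality of $\mathds{P}$ holds strictly, i.e. $\A\bm{\varphi}\in\mathrm{int}\,\mathds{P}$. For primal attainment I would compute the recession cone of the feasible set. Using that $\mathcal{L}_{\T}$ and $\mathcal{L}_{\X}^*$ have nonnegative entries with full support, the recession function of each constraint $g_{\i}(\q)=(\q_0)_{\i}+\tfrac12(\mathcal{L}_{\T}\mathcal{L}_{\X}^*(|\q_1|^2;\ldots;|\q_D|^2))_{\i}$ forces any recession direction $\d=(\d_0;\bar\d)$ of $\mathds{P}$ to satisfy $\bar\d=0$ and $\d_0\le0$. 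Pulling this back through $\A$ on $\bm{1}^{\perp}$, the corresponding $\bm{\psi}$ is spatially constant and non-increasing in time, say $\psi_{\i}=f(i_0)$, whence $\langle\c,\bm{\psi}\rangle=\tfrac{1}{h_X}(f(0)-f(n_0))\ge0$, with equality only when $f$ is constant, i.e. $\bm{\psi}=0$. Thus the objective is nonnegative along every recession direction and vanishes only at $0$, so the sublevel sets of \eqref{eq:opt-disdualori} on $\bm{1}^{\perp}$ are compact; Weierstrass' theorem then delivers a primal minimizer and a finite optimal value.

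Finally, since Slater's condition holds and the optimal value is finite, strong duality applies: the dual \eqref{eq:kkt-dualy} is attained and the duality gap is zero. Pairing the primal minimizer with a dual maximizer produces a saddle point of the Lagrangian $\L(\bm{\varphi},\bm{q};\bm{\Lambda})$, i.e. a solution of the KKT system \eqref{eq:kkt-dual}; the remaining components $\bm{\Lambda}_q$ and the complementarity relation $0\le\bm{\Lambda}_0\perp\bm{q}_0+\tfrac12\mathcal{L}_{\T}\mathcal{L}_{\X}^*(|\bm{q}_1|^2;\ldots;|\bm{q}_D|^2)\le0$ follow by unwinding the stationarity conditions computed before the proposition.

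The main obstacle I anticipate is the recession-cone computation for $\mathds{P}$ — specifically, showing that $\mathcal{L}_{\T}\mathcal{L}_{\X}^*(|\q_1|^2;\ldots;|\q_D|^2)=\bm{0}$ forces $\bar\q=\bm{0}$, which hinges on the strict positivity of the combined interpolation weights. Establishing primal attainment, rather than multiplier existence (which is handed to us by the data-free Slater point), is where the real work lies; it is precisely this coercivity, underwritten by the mass-balance identity $\langle\c,\bm{1}_{|\G^c|}\rangle=0$, that lets the result hold without any positivity assumption on $\rho_0$ and $\rho_1$.
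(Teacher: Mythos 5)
Your proposal is correct, and it shares the paper's overall skeleton (primal Slater point plus finite value plus attainment on both sides, assembled into a KKT solution via Rockafellar's saddle-point theorems), but it establishes primal attainment by a genuinely different sub-argument. The paper never touches the recession cone of $\mathds{P}$: it instead constructs an explicit strictly feasible point $\bm{\xi}=(\bm{\xi}_0;\bar{\bm\xi})$ of the dual \eqref{eq:kkt-dualy}, with $\bm{\xi}_0=(h_0\cdots h_D)\bm{1}_{|\G^s_0|}>0$ chosen so that $-\c-\A_0^*\bm{\xi}_0\in\mathrm{rge}(\bar\A^*)$ (using the same normalization of $\c$ and the same description of $\ker(\bar\A)$ as spatially-constant-per-time-slice vectors that you use), notes the dual objective is bounded above by zero, and applies \cite[Theorem 28.2]{rock-1} to the \emph{dual} to obtain a Kuhn--Tucker vector for \eqref{eq:kkt-dualy}, which is precisely primal attainment for \eqref{eq:opt-disdualori}. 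You instead prove primal attainment directly: the mass-balance identity $\langle\c,\bm{1}_{|\G^c|}\rangle=0$ lets you quotient out $\ker(\A)$, and the computation $0^+\mathds{P}=\{(\d_0;\bar\d)\mid\bar\d=\bm{0},\,\d_0\le0\}$ (which does hinge, as you note, on every entry of $|\q_d|^2$ receiving a strictly positive weight under $\mathcal{L}_\T\mathcal{L}_\X^*$ --- this follows from \eqref{eq:f1vf2v} and \eqref{fdetails}, since $\F^d_1$ and $\F^d_3$ together cover all of $\G^s_d$) forces $\langle\c,\bm{\psi}\rangle=\tfrac{1}{h_X}(f(0)-f(n_0))>0$ on every nonzero recession direction, giving compact sublevel sets. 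The Slater point for the primal is identical in both proofs. Your route buys a self-contained coercivity statement that never requires writing down the explicit dual \eqref{eq:kkt-dualy} or checking the sign conditions in $\mathcal{C}$; the paper's route is shorter once the dual has already been derived (as it is, immediately before the proposition) and produces along the way an explicit interior dual point, which is reused conceptually elsewhere. One small caution: your observation that $\c\in\mathrm{rge}(\A^*)$ makes the stationarity equation solvable is strictly weaker than feasibility of \eqref{eq:kkt-dualy} (which also demands $\mathcal{L}_\X\mathcal{L}_\T^*\bm{\Lambda}_0\ge0$ and the vanishing condition on $\bar{\bm\Lambda}$), so you are right not to lean on it for dual attainment and to extract that instead from strong duality.
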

\begin{proof}
	One can symbolically write the discrete divergence operator as $\A^* \equiv 
	(\A_0^*, \bar\A^* )$ with 
	$\bar\A^* \equiv (\A_1 ^*, \ldots, \A^*_D)$. 
	Recall that $h_d=1/n_d$ represents the step size in the $d$-th dimension. 
	Define the vector $\bm{\xi}_0:=(h_0h_1\cdots h_D) \bm{1}_{|\G^s_0|} \in  \mathds{R}^{|\G^s_0|}$. 
	By the definition of $\A^*_0$, one has
	\begin{equation*}
		(-\A_0^*\bm{\xi}_0)_{\i} = \begin{cases}
			h_1\cdots h_D & \mbox{if } i_0 = 0,\\
			-h_1\cdots h_D & \mbox{if } i_0 = n_0,\\
			0             & \mbox{ otherwise} 
		\end{cases}\qquad \forall\, \i\in\G^c.
	\end{equation*} 
	Consequently, one has from the definition of $\bm{c}$ in \eqref{eq:defc} that  
	\begin{equation}
		\label{czero}
		\sum_{i_1,\ldots,i_D} (-\bm{c}-\A_0^*\bm{\xi}_0)_{0,i_1,\ldots,i_D} = \sum_{i_1,\ldots,i_D} (-\bm{c}-\A_0^*\bm{\xi}_0)_{n_0,i_1,\ldots,i_D}=0.
	\end{equation}
	According to \eqref{Amap} one can define the mapping $\bar\A$ by
	$\bar \A \bm{\varphi} := \left(\mathcal{A}_1\bm{\varphi};\ldots; \mathcal{A}_D\bm{\varphi}\right)\in  \mathds{R}^{|\mathcal{G}_1^s|}\times\cdots\times \mathds{R}^{| \mathcal{G}_D^s|}$. 
	Note that the kernel of $\bar\A$ is spanned by the set of $(n_0+1)$ vectors
	\begin{equation*}
		\left\{
		\begin{array}{r}
			\begin{pmatrix}
				\bm{1}_{|\G^c_X|};
				\bm{0}_{|\G^c_X|};
				\bm{0}_{|\G^c_X|};
				\cdots
				\bm{0}_{|\G^c_X|}
			\end{pmatrix}, 
			\begin{pmatrix}
				\bm{0}_{|\G^c_X|};
				\bm{1}_{|\G^c_X|};
				\bm{0}_{|\G^c_X|};
				\cdots
				\bm{0}_{|\G^c_X|}
			\end{pmatrix},
			\qquad \qquad~
			\\[2mm]
			\ldots,\begin{pmatrix}
				\bm{0}_{|\G^c_X|};
				\cdots
				\bm{0}_{|\G^c_X|};
				\bm{0}_{|\G^c_X|};
				\bm{1}_{|\G^c_X|}
			\end{pmatrix}
		\end{array}
		\right\}.
	\end{equation*}
	Thus, one can see from \eqref{czero} that 
	$\langle -\bm{c}-\A_0^*\bm{\xi}_0,\x\rangle = 0 $ for any $\x\in \mathrm{ker}(\bar{\A})$. 
	Consequently, it follows that $-\bm{c}-\A_0^*\bm{\xi}_0 \in  \mathrm{rge}(\bar\A^*)$ (the range space of $\bar\A^*$), and there exists a vector $\bar{\bm\xi}$ such that
	$$\A^*\bm{\xi}+\c =0 \quad \mbox{with}\quad  \bm{\xi} := (\bm{\xi}_0;\bar{\bm\xi}).$$
	Moreover, since $\bm{\xi}_0 > 0$,
	one has from the definitions of $\L_\T$ and $\L_\X^d$ in \eqref{eq:ltlx} that 
	$\mathcal{L}^d_{\X}\mathcal{L}^*_{\T}(\bm{\xi}_0)>0$ for all $d=1,\ldots,D$.  
	This implies that $\bm{\xi}$ is a strictly feasible point of \eqref{eq:kkt-dualy}. 
	Note that the objective function in \eqref{eq:kkt-dualy} is always non-positive.
	Consequently, from \cite[Theorem 28.2]{rock-1}, we know that a Kuhn-Tucker vector exists for \eqref{eq:kkt-dualy}.

	On the other hand, for any given index $i_0$, define the vector $\bm{\varphi}^*\in  \mathds{R}^{|\G^c|}$ by
	\begin{equation*}
		(\varphi^*)_{i_0,i_1,\ldots,i_D}:=-h_0(i_0+1)\zeta \quad \forall\, (i_1,\ldots,i_D)\in \G^c_X,
	\end{equation*}
	where $\zeta>0$ is a constant. This definition implies that, as index $i_0$ increases, the value of $(\varphi^*)_{i_0,i_1,\ldots,i_D}$ decreases. 
	Therefore, one has 
	$$
	\q^*:=\A\bm{\varphi}^* = (\q_0^*;\bm{0};\ldots;\bm{0}),
	$$
	because $\bm{\varphi}^*$ does not vary with respect to $i_1,\ldots,i_D$. 
	Moreover, each component of the vector $\q_0^*$ is $-\zeta<0$, indicating that  $\q^*$ lies in the interior of the constraint set $\mathds{P}$ defined by \eqref{defp}. 
	Thus, $(\bm{\varphi}^*,\q^*)$ is a strictly feasible point to 
	\eqref{eq:opt-disdualori}. 
	Moreover, the optimal value of \eqref{eq:opt-disdualori} is finite since \eqref{eq:kkt-dualy} is feasible. 
	Consequently,  from \cite[Theorem~28.2]{rock-1}, we know that a Kuhn-Tucker vector of \eqref{eq:opt-disdualori} exists. 
	Then, by \cite[Theorems 30.4 and 30.5]{rock-1}, the solution set to the KKT system \eqref{eq:kkt-dual} is nonempty, and any solution to the KKT system constitutes a primal-dual solution pair to problems \eqref{eq:opt-disdualori} and \eqref{eq:kkt-dualy}.
	This completes the proof. 
\end{proof}

\subsection{Equivalent reformulation of the constraint set}
Next, we reformulate the constraint set $\mathds{P}$ in \eqref{eq:opt-disdualori} and \eqref{defp} into a computationally more tractable form. 
Define the linear operator $\F: \mathds{R}^{|\mathcal{G}_0^s|}\times\cdots\times \mathds{R}^{| \mathcal{G}_D^s|}\rightarrow \mathds{R}^{|\G^s_0|}\times\cdots\times\mathds{R}^{|\G^s_0|}
\equiv \mathds{R}^{(4D+1)|\G^s_0|}$
as follows:
\begin{equation}
	\label{eq.operatorF}
	\mathcal{F}: \q  \mapsto  (\mathcal{F}_0 \q_0; \mathcal{F}^1_1 \q_1 ;\ldots; \mathcal{F}^1_4 \q_1;\ldots;\mathcal{F}^d_1 \q_d;\ldots;\mathcal{F}^d_4 \q_d;\ldots;\mathcal{F}^D_1 \q_D;\ldots;\mathcal{F}^D_4 \q_D),
\end{equation}
where $\q=(\q_0;\ldots ;\q_D)$, $\mathcal{F}_0 \q_0=\q_0$, 
and for all $\i\in\G^s_0$,
\begin{equation}
	\label{fdetails}
	\begin{array}{ll}
		(\mathcal{F}^d_1 \q_d)_{\i} 
		= \begin{cases}
			(\q_d)_{\i-\frac{\e_0+\e_d}{2}}  &\hspace{-4pt} \mbox{if }i_d \geq 1, 
			\\
			0 &\hspace{-4pt}\mbox{if } i_d = 0, 
		\end{cases}
		\
		(\mathcal{F}^d_{2}  \q_d)_{\i} 
		= \begin{cases}
			(\q_d)_{\i-\frac{\e_0-\e_d}{2}}  &\hspace{-4pt} \mbox{if }i_d \leq n_d-1, \\
			0 &\hspace{-4pt}\mbox{if } i_d = n_d, 
		\end{cases}
		\\[4mm]
		(\mathcal{F}^d_{3} \q_d)_{\i} 
		= \begin{cases}
			(\q_d)_{\i+\frac{\e_0-\e_d}{2}}  &\hspace{-4pt}\mbox{if }i_d \geq 1, \\
			0 &\hspace{-4pt}\mbox{if } i_d = 0, 
		\end{cases}
		\
		(\mathcal{F}^d_4 \q_d)_{\i} 
		= \begin{cases}
			(\q_d)_{\i+\frac{\e_0+\e_d}{2}}  &\hspace{-4pt} \mbox{if }i_d \leq n_d-1, \\
			0 &\hspace{-4pt}\mbox{if } i_d = n_d. 
		\end{cases}
	\end{array}
\end{equation}
Note that the operator $\mathcal{F}$ decouples the linear operator $\mathcal{L}_{\T}\mathcal{L}_{\X}^*$ in the sense that  
\begin{equation}
	\label{eq:f1vf2v}
	\mathcal{L}_{\T}\mathcal{L}_{\X}^*(\q_1;\ldots;\q_D)
	= \frac{1}{4} \sum_{d=1}^D\sum_{v=1}^{4} \mathcal{F}^d_{v} \q_d \quad  \forall\, (\q_1;\ldots;\q_D) \in  \mathds{R}^{|\mathcal{G}_1^s|}\times\cdots\times \mathds{R}^{| \mathcal{G}_D^s|}.
\end{equation}
By using the linear operator $\F$ defined in \eqref{eq.operatorF}, one can see from \eqref{defp} and \eqref{eq:f1vf2v} that 
$\q\in \mathds{P} \iff \F\q \in \widetilde{\mathds{P}}$, where
\begin{equation*}  
	\begin{array}{r}
		\widetilde{\mathds{P}}:=
		\Big\{\widetilde{\q}=(\widetilde{\q}_0; \widetilde{\q}_{1,1}; \ldots;\widetilde{\q}_{1,4}; 
		\ldots;
		\widetilde{\q}_{D,1};\ldots; \widetilde{\q}_{D,4})\in  \mathds{R}^{(4D+1)|\G^s_0|} \mid
		\qquad~
		\\[1mm]
		\left(\widetilde{\q}_0\right)_{\i} + \frac{1}{8} \sum_{d=1}^D\sum_{v=1}^{4} (|\widetilde{\q}_{d,v}|^2)_{\i} \leq 0\
		\forall\, \i \in \G^s_0\Big\}.
	\end{array}
\end{equation*}
Thus, one can equivalently reformulate \eqref{eq:opt-disdualori} as the following problem
\begin{equation}\label{eq:disDualReformulation}
	\min\limits_{\bm{\varphi},\q}
	\big\{
	\langle \c, \bm{\varphi}\rangle \mid
	\mathcal{A}\bm{\varphi}=\q, \ \mathcal{F}\q\in  \widetilde{\mathds{P}}
	\big\}.
\end{equation}
It is crucial to note that the operator $\mathcal{F}^*\mathcal{F}$ is diagonal (see Proposition \ref{prop:fbbf} below). 
Moreover,  it is routine and easy to verify that a strictly feasible point of \eqref{eq:opt-disdualori} is a strictly feasible point of \eqref{eq:disDualReformulation}.

\subsection{Reformulation to SOCP}
As discussed in \cite{papa14}, one challenge in applying traditional primal-dual algorithms to solving \eqref{eq:opt-disdualori} is the requirement to solve a series of cubic equations to obtain their real roots. 
To avoid this, we adopt an SOCP reformulation of the discretized problem \eqref{eq:disDualReformulation} to transform the projection onto the set $\widetilde{\mathds{P}}$ into a parallel projection onto second-order cones.  
For the given second-order cone $\mathds{K}_{\mathrm{soc}}\subset \mathds{R}^{4D+2}$ and any $\x \equiv(x_0,x_1,\ldots,x_{4D})^{\top}\in \R^{4D+1}$, we have 
\begin{equation}
	\label{eq:socobservation}
	\begin{array}{lll}
		x_0 + \frac{1}{8} \sum\limits_{v=1}^{4D} x_{v}^2 \leq 0\  
		& \iff &  (1+x_0)^2+ \frac{1}{2} \sum\limits_{v=1}^{4D} x_{v}^2\leq (1-x_0)^2,
		\\
		&\iff&  \bar{\B} \x + \bar{\d} \in  \mathds{K}_{\mathrm{soc}},
	\end{array}
\end{equation}
where $\bar{\B} \bm{x} := (-x_0, \frac{\sqrt{2}}{2} x_1, \ldots, \frac{\sqrt{2}}{2} x_{4D}, x_0)^\top$ and $\bar{\d} := (1, 0, \ldots, 0, 1)^\top$.
Therefore, for $\widetilde{\q}\equiv (\widetilde{\q}_0;\widetilde{\q}_1;\ldots\\;\widetilde{\q}_{4D})\in \mathds{R}^{(4D+1)|\G^s_0|}$, we define the linear operator
$\B\colon  \mathds{R}^{(4D+1) |\G^s_0|}\rightarrow\mathds{R}^{(4D+2) |\G^s_0|}$
by
\begin{equation}
	\label{eq:bdd}
	\begin{array}{l}
		\mathcal{B}\widetilde{\q}:= 
		\big(
		-\widetilde{\q}_0;\frac{\sqrt{2}}{2}\widetilde{\q}_1;\ldots; \frac{\sqrt{2}}{2}\widetilde{\q}_{4D};  \widetilde{\q}_0
		\big),    
	\end{array}
\end{equation}
Then, one can see that for any $\q\in \mathds{R}^{|\mathcal{G}_0^s|}\times\cdots\times \mathds{R}^{| \mathcal{G}_D^s|}$, 
$$\F \q\in \widetilde{\mathds{P}}\ \iff \  \B\F \q+\d\in  \mathds{Q},$$ 
where $\d:= (\bm{1}_{|\G^s_0|}; \bm{0}_{|\G^s_0|};\ldots; \bm{0}_{|\G^s_0|}; \bm{1}_{|\G^s_0|})\in  \mathds{R}^{(4D+2) |\G^s_0|}$, and $\mathds{Q}\subset \mathds{R}^{(4D+2)|\G^s_0|}$ is the Cartesian product of second-order cones defined by
\begin{equation*}
	\mathds{Q} := 
	\big\{(\bm{\beta}_1;\ldots; \bm{\beta}_{4D+2}) \mid \bm{\beta}_d\in\mathds{R}^{|\G^s_0|}, 
	((\beta_1)_{\i};\ldots;(\beta_{4D+2})_{\i})\in \mathds{K}_{\mathrm{soc}}\ \forall\, \i\in \G^s_0 
	\big\}.
\end{equation*}
Consequently, we equivalently recast \eqref{eq:disDualReformulation} as
\begin{equation}
	\label{eq:opt-dualsoc}
	\min\limits_{\bm{\varphi},\q}
	\big\{
	\langle \c, \bm{\varphi}\rangle 
	\mid
	\mathcal{A}\bm{\varphi}=\q,\ 
	\B\mathcal{F}\q+\d\in  \mathds{Q}
	\}.
\end{equation}
According to \eqref{eq:socobservation}, one can see that a strictly feasible point of \eqref{eq:disDualReformulation} is also strictly feasible for \eqref{eq:opt-dualsoc}. 

At first glance, the reformulation \eqref{eq:opt-dualsoc} involves seemingly more complex constraints than the original discretized formulations \eqref{eq:opt-disdualori} or \eqref{eq:disDualReformulation}. 
While this might appear to be a disadvantageous strategy, the computational efficiency gained from the projection onto $\mathds{Q}$ far outweighs the perceived complexity.
Specifically, the projection $\operatorname{\Pi}_{\widetilde{\mathds{P}}}(\cdot)$ necessitates solving a series of cubic equations. In contrast, the projection $\operatorname{\Pi}_{\mathds{Q}}(\cdot)$ onto the second-order cones can be explicitly computed in closed form \eqref{app-proj} with a linear complexity. 
To empirically support this design, we conducted a preliminary numerical experiment in MATLAB comparing the efficiency of $\operatorname{\Pi}_{\mathds{Q}}(\cdot)$ and $\operatorname{\Pi}_{\widetilde{\mathds{P}}}(\cdot)$ across various grid sizes $(n_0, n_1, n_2)$ with $D=2$ within the same computational environment as \Cref{sec:numerical}. 
Notably, our implementation of $\operatorname{\Pi}_{\mathds{Q}}(\cdot)$ utilizes a C++ MEX interface to accelerate the computation\footnote{We observed that the overhead of merely constructing the cubic equations—even before solving them—already exceeded the total time required for the second-order cone projections. This observation underscores the efficiency of the SOCP reformulation and obviates the need for a C++ implementation of $\operatorname{\Pi}_{\widetilde{\mathds{P}}}(\cdot)$.}.
We count the average CPU time of $20$ randomly generated $\x\in \mathds{R}^{n_0\times (n_1+1)\times (n_2+1)}$.
The numerical results are summarized in \Cref{tab:compareProj}.
Given that these projections are the most computationally intensive components of primal-dual type algorithms, these preliminary results provide strong justification for the algorithmic framework developed in the following section. 
Furthermore, as an alternative to avoid $\operatorname{\Pi}_{\mathds{P}}(\cdot)$, the accelerated proximal gradient method was employed in \cite{yjj24}. 
While it bypasses both the cubic equations and the linear systems associated with staggered grid interpolation, its overall efficiency remains sensitive to the Lipschitz constant of the objective function, as further analyzed in our numerical evaluations in Section \ref{sec:numerical}.

\begin{table}
	\caption{Comparison of $\operatorname{\Pi}_{\widetilde{\mathds{P}}}(\x)$ with $\operatorname{\Pi}_{\mathds{Q}}(\x)$ in computational time (seconds)}
	\label{tab:compareProj}    
	\begin{tabular}{llll}
		\hline\noalign{\smallskip}
		$(n_0,n_1,n_2)$ &  $\operatorname{\Pi}_{\widetilde{\mathds{P}}}(\x)$ & $\operatorname{\Pi}_{\mathds{Q}}(\x)$ & $\operatorname{\Pi}_{\mathds{Q}}(\x)$ in C++ \\
		\noalign{\smallskip}\hline\noalign{\smallskip}
		$(32, 128, 128)$  &  0.1733 (s) & 0.0264 (s) & 0.0098 (s) \\
		$(64, 256, 256)$  &  1.4948 (s) & 0.2481 (s) & 0.0786 (s) \\
		$(128, 512, 512)$ &  21.0523 (s) & 3.3177 (s) & 0.7664 (s) \\
		\hline
	\end{tabular}
\end{table}

The SOCP reformulation \eqref{eq:opt-dualsoc} belongs to the class of linear conic programming problems, which have been extensively studied in recent decades. Classical numerical methods, such as the interior point method, are well-suited for this problem \cite{sdpt3}. 
Recent studies, such as \cite{ll21}, can also be applied. 
Here, in order to tackle problems with extremely large data size, we propose to take a customized approach by examining the particular properties of \eqref{eq:opt-dualsoc}, which are beneficial for the algorithmic design in the next section.  
The most crucial observation is the following result.

\begin{proposition}\label{prop:fbbf}
	The composite linear operator $\F^*\B^*\B\F$ is diagonal, where $\F$ and $\B$ are defined in \eqref{eq.operatorF} and \eqref{eq:bdd}, respectively.
\end{proposition}
\begin{proof}
	Let $\q = (\q_0; \q_1;\ldots; \q_D)$ and $ \hat{\q} = (\hat{\q}_0;\hat{\q}_1;\ldots;\hat{\q}_D) $ be vectors in $\mathds{R}^{|\mathcal{G}_0^s|}\times\cdots\times \mathds{R}^{| \mathcal{G}_D^s|}$, and $\u = (\u_0; \u_1;\ldots; \u_{4D+1}) \in  \mathds{R}^{(4D+2) |\G^s_0|}$.
	From \eqref{eq.operatorF} and \eqref{eq:bdd}, the linear operator $\B \F$ and its adjoint $\F^*\B^*$ are given by
	\begin{equation}
		\label{eq.operBF}
		\begin{array}{ll}
			\B \F \q  = \Big(-\F_0 \q_0; \frac{\sqrt{2}}{2} \F_{1}^1 \q_1; \ldots, \frac{\sqrt{2}}{2} \F_{4}^1 \q_1; \ldots;\frac{\sqrt{2}}{2} \F_{1}^D \q_D; \ldots; \frac{\sqrt{2}}{2} \F_{4}^D \q_D; \F_0 \q_0\Big)
		\end{array}
	\end{equation}
	and
	\begin{equation}
		\label{eq.operFB}
		\begin{array}{ll}
			\F^* \B^* \u = \Big( \F_{0}^* (\u_{4D+1} - \u_{0}); \frac{\sqrt{2}}{2} \sum\limits_{v=1}^{4} (\F_{v}^1)^* \u_v ;\ldots; \frac{\sqrt{2}}{2} \sum\limits_{v=1}^{4} (\F_{v}^D)^* \u_{4D-4+v} \Big). 
		\end{array}
	\end{equation} 
	Combining \eqref{eq.operBF} and \eqref{eq.operFB}, we can obtain that 
	\begin{equation*}
		\begin{array}{ll}
			\langle\F^* \B^* \B \F \q, \hat{\q}\rangle 
			= \langle {2 \F_0^* \F_0 \q_0}, {\hat{\q}_0}\rangle 
			+ \sum\limits_{d=1}^D \big\langle{\frac{1}{2} \sum\limits_{v=1}^{4} (\F_{v}^d)^* \F_{v}^d \q_d},{\hat{\q}_d}\big\rangle.
		\end{array}
	\end{equation*}
	Since $\hat{\q}$ is arbitrarily chosen, it holds that 
	\begin{equation}
		\label{FFF}
		\begin{array}{ll}
			\F^* \B^* \B \F \q = \Big(2 \F^*_0 \F_0 \q_0; \frac{1}{2} \sum\limits_{v=1}^{4} (\F_{v}^1)^* \F_{v}^1 \q_1;\ldots; \frac{1}{2} \sum\limits_{v=1}^{4} (\F_{v}^D)^* \F_{v}^D \q_D\Big).
		\end{array}
	\end{equation}
	Note that $\F_0$ is the identity operator, so $\F^*_0 \F_0 \q_0 = \q_0$. Next, observe that each operator $\mathcal{F}_{v}^d$ extracts specific elements from $\q_d$, ensuring that each row of $\mathcal{F}_{v}^d$ contains at most one nonzero element that is equal to $1$. 
	Then, for $v=1$ and any $d=1,\ldots, D$, one has 
	\begin{equation*}
		\setlength{\extrarowheight}{3pt}
		\begin{array}{ll}
			\big\langle\left(\F_{v}^d\right)^* \F_{v}^d \q_d,\hat{\q}_d\big\rangle
			= \big\langle{\F_{1}^d \q_d}, 
			{\F_{1}^d \hat{\q}_d}\big\rangle
			\\[1mm]
			\displaystyle
			= \sum\limits_{\i \in  \G^s_0, i_d > 0} (q_d)_{\i-\frac{\e_0+\e_d}{2}} (\hat{q}_d)_{\i-\frac{\e_0+\e_d}{2}} 
			= \sum\limits_{\i \in  \G^s_d, i_0 < n_0} (q_d)_{\i} (\hat{q}_d)_{\i},
		\end{array}
	\end{equation*}
	which indicates that, for all $\i\in\G^s_d$, 
	\begin{equation*}
		\big( (\F_{1}^d )^* \F_{1}^d \q_d\big)_{\i} =
		\begin{cases}
			(q_d)_{\i} & \mbox{if}\; i_0 < n_0, \\
			0               & \mbox{if}\; i_0 = n_0.
		\end{cases} 
	\end{equation*}
	Consequently, the matrix $(\F_{1}^d)^* \F_{1}^d$ is diagonal. 
	Note that one can repeat the above procedure to compute $(\F_{v}^d)^* \F_{v}^d\q_d$ explicitly for the cases that $v = 2, 3, 4$ as follows:
	$$
	\left(\F_{2}^d\right)^* \F_{2}^d \q_d=\left(\F_{1}^d\right)^* \F_{1}^d \q_d,
	$$
	and
	$$
	\big((\F_{3}^d)^* \F_{3}^d \q_d\big)_{\i}
	=\big((\F_{4}^d)^* \F_{4}^d \q_d\big)_{\i}=
	\begin{cases}
		(q_d)_{\i} & \mbox{if}\; i_0 >0, \\
		0               & \mbox{if}\; i_0 = 0.
	\end{cases}
	$$
	Therefore, for any index $\i$ that represents a mesh point in $\G^s_d$, one has that 
	\begin{equation*}
		\big(\sum_{v=1}^{4}(\F_{v}^d)^* \F_{v}^d\q_d\big)_{\i} = 
		\begin{cases}
			2 (q_d)_{\i} & \mbox{if}\; i_0 = 0, \\
			4 (q_d)_{\i} & \mbox{if}\; 0 < i_0 < n_0, \\
			2 (q_d)_{\i} & \mbox{if}\; i_0 = n_0,
		\end{cases}
	\end{equation*}
	which, together with \eqref{FFF}, implies that $\F^*\B^*\B\F$ is diagonal. 
	This completes the proof.
\end{proof}

\begin{remark}
	A prior study \cite[Section 3.6]{papa14} discussed a (rotated) SOCP reformulation of problem \eqref{eq:kkt-dualy} but without decoupling the constraints. 
	Implementing first-order primal-dual methods can make such a reformulation lead to more difficult subproblems. 
	It is essentially different from our approach, in which the linear operator $\F$ plays an essential role in separating variables $\q$, enabling us to directly solve the second-order cone projection and efficiently apply existing first-order algorithms, as will be detailed in \Cref{sec:alg}.
\end{remark}

\subsection{Optimality condition and existence of solutions}
Finally, we discuss the optimality condition and the existence of solutions to the SOCP reformulation \eqref{eq:opt-dualsoc}. 
By introducing the auxiliary variables $\z:=\B\mathcal{F}\q+\d$, it can be equivalently reformulated as 
\begin{equation}
	\label{eq:opt-dualsocalm}
	\min\limits_{\bm{\varphi},\z,\q}\ 
	\big\{
	\langle \c, \bm{\varphi}\rangle + \delta_{\mathds{Q}}(\z)
	\mid
	\mathcal{M}(\bm\varphi;\bm z) +\mathcal{N}\q= (\bm 0;\d)
	\big\},
\end{equation}
where 
$
\mathcal{M} 
(\bm\varphi;\bm z)
:=
(\A \bm\varphi;\z)
$
and
$\mathcal{N} \q := (-\q;-\B\F\q)$. 
Define for \eqref{eq:opt-dualsocalm} the 
Lagrangian function
$$
\mathfrak{L}(\bm{\varphi},\z,\q;\bm{\alpha},\bm{\beta}):
=
\langle \c, \bm{\varphi}\rangle + \delta_{\mathds{Q}}(\z)+\langle\mathcal{A}\bm{\varphi}-\q,\bm{\alpha}\rangle+\langle \z-\B\mathcal{F}\q-\d,\bm{\beta} \rangle
$$
where $\bm{\alpha}$ and $\bm{\beta}$ are the dual variables (multipliers). 
Then, the KKT system to \eqref{eq:opt-dualsocalm} is given by
\begin{equation}
	\label{eq:kkt}
	\begin{cases}
		\mathcal{M}(\bm\varphi;\bm z)+\mathcal{N}\q=(\bm0 ;\d),
		\\
		(\A^*\bm\alpha;\bm\alpha) + (\bm 0;\F^*\B^* \bm\beta)  =  (-\bm c;\bm 0),
		\\
		\mathds{Q}\ni \z \perp \bm{\beta} \in  \mathds{Q}.
	\end{cases}
\end{equation} 
Next, we show that the solution set to the KKT system \eqref{eq:kkt} is nonempty, and one can recover from it a solution of the KKT system \eqref{eq:kkt-dual} of \eqref{eq:opt-disdualori}. 
We should reemphasize that we have not imposed any assumptions either in reformulating the discrete DOT problem \eqref{eq:opt-disdualori} as the SOCP problem \eqref{eq:opt-dualsocalm} and in establishing the proposition below. 
Consequently, the SOCP problem \eqref{eq:opt-dualsocalm} is an exact reformulation of the discretized DOT problem \eqref{eq:opt-disdualori}, in the sense that no further approximation is introduced at the discrete level. 
We should also mention that this exactness refers only to the finite-dimensional reformulation and does not address the convergence of the staggered-grid discretization to the continuous problem \eqref{eq:opt-ene-bb} or its dual \eqref{eq:opt-dual}.

\begin{proposition}
	\label{prop:sockkt}
	The solution set to the KKT system \eqref{eq:kkt} is nonempty.
	Moreover, for $(\bm{\varphi},\z,\q,\bm{\alpha}\\, \bm{\beta})$ being a solution to \eqref{eq:kkt}, the subvector $(\bm{\varphi},\q, \bm{\alpha}_0, \\\bar{\bm{\alpha}},-\bm{\alpha}_0)$ is a solution to the KKT system \eqref{eq:kkt-dual}, 
	where we decompose $\bm{\alpha}
	=(\bm{\alpha}_0;\bar{\bm{\alpha}})\in \mathds{R}^{|\mathcal{G}_0^s|}\times\mathds{R}^{|\mathcal{G}_1^s|+ \cdots+| \mathcal{G}_D^s|}$.
\end{proposition}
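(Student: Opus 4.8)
The proof splits into the solvability of the KKT system \eqref{eq:kkt} and the explicit correspondence it induces with \eqref{eq:kkt-dual}. For solvability, the plan is to exploit that \eqref{eq:opt-dualsoc}, hence its split form \eqref{eq:opt-dualsocalm}, is merely an equivalent reformulation of the discretized problem \eqref{eq:opt-disdualori}. The strictly feasible point $(\bm{\varphi}^*,\q^*)$ constructed in the proof of \Cref{prop:range} is strictly feasible for \eqref{eq:disDualReformulation} and, through \eqref{eq:socobservation}, places $\B\F\q^*+\d$ in the interior of $\mathds{Q}$, i.e. it is a Slater point for \eqref{eq:opt-dualsoc}. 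Since the optimal value is finite (it equals that of \eqref{eq:opt-disdualori}), \cite[Theorem 28.2]{rock-1} yields a Kuhn-Tucker vector; and since \eqref{eq:opt-disdualori} attains its optimum by \Cref{prop:range}, the equivalence supplies a primal optimal solution of \eqref{eq:opt-dualsoc}. Then \cite[Theorems 30.4 and 30.5]{rock-1} make the KKT system \eqref{eq:kkt} solvable, mirroring the argument of \Cref{prop:range}.

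For the correspondence, I take a solution $(\bm{\varphi},\q,\z,\bm{\alpha},\bm{\beta})$ of \eqref{eq:kkt} and set $\bm{\Lambda}_0=\bm{\alpha}_1$, $\bar{\bm\Lambda}=\bm{\alpha}_2$, $\bm{\Lambda}_q=-\bm{\alpha}_1$. Three of the four lines of \eqref{eq:kkt-dual} are immediate: $\A\bm{\varphi}=\q$ is the first block of \eqref{eq:kkt}; splitting the adjoint equation of \eqref{eq:kkt} into its two blocks gives $\A^*(\bm{\Lambda}_0;\bar{\bm\Lambda})+\c=0$ together with the stationarity relation $\bm{\alpha}=-\F^*\B^*\bm{\beta}$; and $\bm{\Lambda}_0+\bm{\Lambda}_q=0$ holds by construction. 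The entire content of the argument therefore lies in recovering the two complementarity conditions on $\bm{\Lambda}_0$ and $\bar{\bm\Lambda}_d$.

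This is the main obstacle: converting the single cone complementarity $\mathds{Q}\ni\z\perp\bm{\beta}\in\mathds{Q}$ into the scalar complementarity of the third line and the Hadamard identity of the fourth. Because $\langle\z,\bm{\beta}\rangle$ is a sum over $\i\in\G^s_0$ of the pairings $\langle\z^{(\i)},\bm{\beta}^{(\i)}\rangle$ of vectors in the self-dual cone $\mathds{K}_{\mathrm{soc}}\subset\mathds{R}^{4D+2}$, each of which is nonnegative, every term vanishes separately and \Cref{lem:soccomplem} applies coordinatewise. With $\z^{(\i)}=\bigl(1-(\q_0)_\i;\,\tfrac{\sqrt2}{2}(\F^d_v\q_d)_\i;\,1+(\q_0)_\i\bigr)\neq 0$ (its first and last entries sum to $2$), the lemma leaves only $\bm{\beta}^{(\i)}=0$ or $\bm{\beta}^{(\i)}=\eta_\i\bigl(z^{(\i)}_0;-\bar{z}^{(\i)}\bigr)$ with $\eta_\i>0$. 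Reading $\bm{\alpha}_1=\bm{\beta}_0-\bm{\beta}_{4D+1}$ off \eqref{eq.operFB} gives $(\bm{\Lambda}_0)_\i=2\eta_\i\ge 0$, and in the nonzero case the vanishing pairing forces $z^{(\i)}_0=\|\bar{z}^{(\i)}\|$, i.e. the parabolic slack $(\q_0)_\i+\tfrac18\sum_{d,v}|(\F^d_v\q_d)_\i|^2$ is zero; this delivers the complementarity of the third line.

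The fourth line then follows by substituting the middle blocks $(\bm{\beta}_{d,v})_\i=-\tfrac{\sqrt2}{2}\eta_\i(\F^d_v\q_d)_\i$ into $(\bm{\alpha}_2)_d=-\tfrac{\sqrt2}{2}\sum_v(\F^d_v)^*\bm{\beta}_{d,v}$ from \eqref{eq.operFB}, using $\eta_\i=(\bm{\Lambda}_0)_\i/2$ and the adjoint of the decoupling identity \eqref{eq:f1vf2v}, namely $(\mathcal{L}_{\X}\mathcal{L}^*_{\T}\bm{\mu})_d=\tfrac14\sum_v(\F^d_v)^*\bm{\mu}$. I expect the only genuinely delicate points to be bookkeeping: tracking the index shifts where the operators $\F^d_v$ truncate at the temporal boundaries $i_0\in\{0,n_0\}$, and checking that the constants line up — the product $\tfrac{\sqrt2}{2}\cdot\tfrac{\sqrt2}{2}=\tfrac12$ coming from $\B$ must combine with the factor $2$ relating $\eta_\i$ to $(\bm{\Lambda}_0)_\i$ and the $\tfrac14$ in $\mathcal{L}_{\X}\mathcal{L}^*_{\T}$ to reproduce exactly $\bar{\bm\Lambda}_d=(\mathcal{L}_{\X}\mathcal{L}^*_{\T}\bm{\Lambda}_0)\odot\q_d$.
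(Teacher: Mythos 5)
Your proposal is correct and follows essentially the same route as the paper: existence via the equivalence with \eqref{eq:opt-disdualori}, the Slater point from \Cref{prop:range}, and \cite[Theorem 28.2]{rock-1}; the correspondence via a coordinatewise application of \Cref{lem:soccomplem} to the cone complementarity, reading $\bm{\alpha}_1=\bm{\beta}_1-\bm{\beta}_{4D+2}$ off $\F^*\B^*$, and the commutation identity $(\F^d_v)^*(\widetilde{\bm\eta}\odot\F^d_v\q_d)=((\F^d_v)^*\widetilde{\bm\eta})\odot\q_d$ for the Hadamard line — the index-shift verification you defer to ``bookkeeping'' is exactly the content of the paper's \eqref{keyrelation}. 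Your parametrization $\bm{\beta}^{(\i)}=\eta_\i(z^{(\i)}_0;-\bar z^{(\i)})$ (exploiting that $[\z]_\i\neq 0$) even disposes of the degenerate case $(\bm{\alpha}_1)_\i=0$ slightly more directly than the paper's contradiction argument, but this is a cosmetic difference, not a different method.
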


\begin{proof}
	Note that the problems \eqref{eq:opt-disdualori} and \eqref{eq:opt-dualsocalm} share the same optimal value. 
	Then, by  \Cref{prop:range}, the optimal value of \eqref{eq:opt-dualsocalm} is finite and attainable. 
	Moreover, it is easy to see that a strictly feasible point $(\bm{\varphi}, \bm{q})$ of \eqref{eq:opt-disdualori} ensures that $(\bm{\varphi}, \z,\q)$ is strictly feasible to \eqref{eq:opt-dualsocalm} with $\z=\B\mathcal{F}\q+\d$. 
	It follows from \cite[Theorem 28.2]{rock-1} that the KKT system \eqref{eq:kkt} admits a nonempty solution set.

	Let $(\bm{\varphi},\z,\q,\bm{\alpha},\bm{\beta})$ be a solution to \eqref{eq:kkt}. 
	Note that $\A\bm{\varphi}=\q$ and $\A^*\bm{\alpha}+\c=0$. 
	Moreover, since  $\bm{\beta}, \z\in \mathds{R}^{(4D+2)|\mathcal{G}^s_0|}$, one can write them as 
	$$
	\bm{\beta}=(\bm{\beta}_0;\ldots; \bm{\beta}_{4D+1})
	\quad\mbox{and}\quad
	\z=(\z_0;\ldots; \z_{4D+1}), 
	$$
	in which each component is a vector in $\mathds{R}^{|\mathcal{G}^s_0|}$. 
	Note that by the definitions of $\F$ in \eqref{eq.operatorF} and $\B$ in \eqref{eq:bdd} one can get 
	$$
	\mathcal{F}^*\B^* \bm{\beta}= \Big(\bm{\beta}_{4D+1}-\bm{\beta}_0;\frac{\sqrt{2}}{2}\sum_{v=1}^4\left(\mathcal{F}^1_{v}\right)^*\bm{\beta}_{v};\ldots;\frac{\sqrt{2}}{2}\sum_{v=1}^4\left(\mathcal{F}^D_{v}\right)^*\bm{\beta}_{4D-4+v}\Big).
	$$
	Note that $\bm{\beta} \in  \mathds{Q}$, i.e., 
	$$
	[\bm{\beta}]_{\i} := ( (\beta_0)_{\i}; \bar{\bm{\beta}}_{\i})\in\mathds{K}_{\mathrm{soc}}
	\quad\mbox{with}\quad 
	\bar{\bm{\beta}}_{\i}:=((\beta_1)_{\i};\ldots;(\beta_{4D+1})_{\i})\in
	\mathds{R}^{4D+1}.
	$$
	Similarly, one has from $\z \in  \mathds{Q}$ that 
	$$
	[\z]_{\i} := ( (z_0)_{\i}; \bar \z_{\i})\in\mathds{K}_{\mathrm{soc}}
	\quad\mbox{with}\quad 
	\bar{\z}_{\i}:=((z_1)_{\i};\ldots;(z_{4D+1})_{\i})\in
	\mathds{R}^{4D+1}.
	$$
	Note that  $\left(\bm{\beta}_{4D+1}-\bm{\beta}_0\right)_{\i} \leq 0$ for all $\i\in\mathcal{G}^s_0$. 
	By using $ \bm{\alpha}=-\mathcal{F}^*\B^* \bm{\beta} $ one can get $(\alpha_0)_{\i} \geq 0$ for all $\i\in\mathcal{G}^s_0$.

	Next, we show that the complementarity condition in \eqref{eq:kkt-dual} holds with $\bm{\Lambda}_0=\bm{\alpha}_0$.  
	Since $\z=\B\F\q+\d$, the first and last elements of $[\z]_{\i}$ are given by $1-(q_0)_{\i}$ and $1+(q_0)_{\i}$, 
	implying that $[\z]_{\i}$ is a nonzero vector. 
	If $(\alpha_0)_{\i}>0$ for some $\i\in\mathcal{G}^s_0$, one has 
	$(\bm{\beta}_{4D+1}-\bm{\beta}_0)_{\i}<0$, 
	indicating that $[\bm{\beta}]_{\i}$ is a nonzero vector. 
	Since $\mathds{Q}\ni \z \perp \bm{\beta} \in  \mathds{Q}$ 
	and $[\z]_{\i}$ is a nonzero vector, 
	$[\z]_{\i}$ must lie on the boundary of the second-order cone $\mathds{K}_{\mathrm{soc}}$.  
	Then by  \eqref{eq:socobservation} one has
	$$
	(q_0)_{\i}+\frac{1}{2}\left(\mathcal{L}_{\T}\mathcal{L}_{\X}^*(|\q_1|^2;\ldots;|\q_D|^2)\right)_{\i}=0. 
	$$
	
	Finally, we show that the last line in \eqref{eq:kkt-dual} holds with $\bm{\Lambda}_0=\bm{\alpha}_0$ and $\bar{\bm\Lambda}=\bar{\bm{\alpha}}$.  
	Recall that 
	\begin{equation*}
		\begin{array}{ll}
			\bar{\bm{\alpha}}
			=-\Big(\frac{\sqrt{2}}{2} \sum_{v=1}^4\left(\mathcal{F}^1_{v}\right)^*\bm{\beta}_{v};
			\ldots;
			\frac{\sqrt{2}}{2}
			\sum_{v=1}^4\left(\mathcal{F}^D_{v}\right)^*\bm{\beta}_{4D-4+v}
			\Big).
		\end{array}
	\end{equation*}
	Similar to the previous discussion for  $[\z]_{\i}$, if $(\alpha_0)_{\i} > 0$ for some $\i\in\mathcal{G}^s_0$, the vector $[\bm{\beta}]_{\i}$ is nonzero. Consequently, 
	by \cite[Lemma 15]{soc}, there exists a constant
	\begin{equation*}
		\eta_{\i}: = \frac{(z_0)_{\i}}{(\beta_0)_{\i}}>0\text{ such that }[\z]_{\i}=\eta_{\i}\big((\beta_0)_{\i}; -\bar{\bm{\beta}}_{\i}\big).
	\end{equation*}
	Since $\z=\B\mathcal{F}\q+\d$, one has
	$$
	1-(q_0)_{\i}=(z_0)_{\i}=\eta_{\i}(\beta_0)_{\i}
	\quad\mbox{and}\quad 
	1+(q_0)_{\i}=(z_{4D+1})_{\i}=-\eta_{\i}(\beta_{4D+1})_{\i},
	$$
	which implies
	$(\alpha_0)_{\i}=(\beta_{0})_{\i}-(\beta_{4D+1})_{\i} = 2/\eta_{\i}$. 
	On the other hand, 
	if $(\alpha_0)_{\i}=0$ for some $\i\in\mathcal{G}^s_0$, then $(\bm{\beta}_{4D+1}-\bm{\beta}_0)_{\i}=0$.
	In this case, if $(\beta_0)_{\i}\neq  0$, there exist a $\eta_{\i}>0$ such that 
	$$
	1-(q_0)_{\i}=(z_0)_{\i}=\eta_{\i}(\beta_0)_{\i}\quad\mbox{and}\quad 
	1+(q_0)_{\i}=(z_{4D+1})_{\i}=-\eta_{\i}(\beta_{0})_{\i},
	$$
	which leads to a contradiction.
	Thus, it holds that 
	$(\beta_0)_{\i}=(\beta_{4D+1})_{\i}=0$,  
	implying that $[\bm{\beta}]_{\i}$ is a zero vector. 
	Define the vector $\widetilde{\bm{\eta}}\in \mathds{R}^{|\mathcal{G}^s_0|}$ by
	\begin{equation*}
		\widetilde{\eta}_{\i}:=\begin{cases}
			\frac{1}{\eta_{\i}} &\text{if } (\alpha_0)_{\i} > 0,\\
			0 &\text{if }{(\alpha_0)}_{\i} = 0
		\end{cases}
		\quad \forall\, \i\in\mathcal{G}^s_0,
	\end{equation*}
	so that $\bm{\alpha}_0 = 2 \widetilde{\bm{\eta}}$ and $-\bm{\beta}_n = \widetilde{\bm{\eta}}\odot \z_n, n=1,\ldots,4D+1$. 
	One can see from the proof for Proposition \ref{prop:fbbf} that  
	$$
	\begin{array}{ll}
		\B \F \q  = 
		\\
		\quad\Big(-\F_0 \q_0; \frac{\sqrt{2}}{2} \F_{1}^1 \q_1; \ldots, \frac{\sqrt{2}}{2} \F_{4}^1 \q_1;\ldots; \frac{\sqrt{2}}{2} \F_{1}^D \q_D; \ldots; \frac{\sqrt{2}}{2} \F_{4}^D \q_D; \F_0 \q_0
		\Big). 
	\end{array}
	$$
	Therefore, the $d$-th block-component of $\bar{\bm{\alpha}}$ can be given by
	\begin{equation*}
		\begin{array}{ll}
			-\frac{\sqrt{2}}{2}\Big(\sum\limits_{v=1}^4\left(\mathcal{F}^d_{v}\right)^*\bm{\beta}_{4d-4+v}\Big)  &=  \frac{\sqrt{2}}{2}\Big(\sum\limits_{v=1}^4\left(\mathcal{F}^d_{v}\right)^*\left(\widetilde{\bm{\eta}}\odot \z_{4d-4+v}\right)\Big)  \\[1mm]
			&= \frac{1}{2}\sum\limits_{v=1}^4 \left(\mathcal{F}^d_{v}\right)^* \left(\widetilde{\bm{\eta}}\odot \left(\mathcal{F}_{v}^d\q_d\right)\right).
		\end{array}
	\end{equation*}
	Moreover,  from \eqref{eq:f1vf2v} one has that $\mathcal{L}_{\X}^d\mathcal{L}_{\T}^* \bm{\alpha}_0 = \frac{1}{4}\sum^4_{v=1}\left(\F_{v}^d\right)^*\bm{\alpha}_0$.
	Then by  $\bm{\alpha}_0 = 2 \widetilde{\bm{\eta}}$ one has
	$$
	(\mathcal{L}_{\X}^d\mathcal{L}_{\T}^* \bm{\alpha}_0)
	\odot \q_d 
	= \frac{1}{2}\Big(\sum^4_{v=1}\left(\F_{v}^d\right)^*\widetilde{\bm{\eta}}\Big)\odot \q_d,
	\quad d=1,\ldots, D.
	$$
	Consequently, the proposition is proved if one can show that
	\begin{equation}
		\label{keyrelation}
		\left(\F_{v}^d\right)^*(\widetilde{\bm{\eta}}\odot (\mathcal{F}_{v}^d\q_d))
		=\left((\F_{v}^d)^*\widetilde{\bm{\eta}}\right) \odot \q_d \quad  
		\forall v=1,\ldots, 4. 
	\end{equation}
	For the case that $v=1$, from \eqref{fdetails} one has the operator $\left(\F_{1}^d\right)^*\colon \mathds{R}^{|\mathcal{G}^s_0|}\rightarrow\mathds{R}^{|\mathcal{G}^s_d|}$ is given by
	\begin{equation*}
		\left((\mathcal{F}^d_1)^*\bm{p}\right)_{\i-\frac{\e_0+\e_d}{2}} :=\begin{cases}
			p_{\i}  &\text{if } i_d\geq 1 \text{ and }i_0+\frac{1}{2} <n_0,\\
			0 &\mbox{otherwise} 
		\end{cases}
		\quad \forall\, \i\in\G^s_0.
	\end{equation*}
	Therefore, it holds that for all $\i\in\G^s_0,$
	$$\big(\left((\mathcal{F}^d_1)^*\widetilde{\bm{\eta}}\right) \odot \q_d\big)_{\i-\frac{\e_0+\e_d}{2}} =\begin{cases}
		\widetilde{\eta}_{\i} (q_d)_{\i-\frac{\e_0+\e_d}{2}}   &\text{if } i_d\geq 1 \text{ and }i_0+\frac{1}{2} <n_0,\\
		0 &\mbox{otherwise.}  
	\end{cases}
	$$
	Moreover, since if $i_d\geq 1$, we have $(\widetilde{\bm{\eta}}\odot (\mathcal{F}_{1}^d\q_d))_{\i} =  \widetilde{\eta}_{\i} (q_d)_{\i-\frac{\e_0+\e_d}{2}}.$
	Thus, for all $\i\in\G^s_0,$
	$$\big((\mathcal{F}^d_1)^*(\widetilde{\bm{\eta}}\odot (\mathcal{F}_{1}^d\q_d))\big)_{\i-\frac{\e_0+\e_d}{2}} = \begin{cases}
		\widetilde{\eta}_{\i} (q_d)_{\i-\frac{\e_0+\e_d}{2}}   &\text{if } i_d\geq 1 \text{ and }i_0+\frac{1}{2} <n_0,\\
		0 &\mbox{otherwise.}  
	\end{cases}$$  
	Consequently, \eqref{keyrelation} holds for $v=1$. 
	Moreover, it is easy to repeat the above procedure to show that \eqref{keyrelation} holds for the rest of the cases, and this completes the proof.     
\end{proof}

According to \Cref{prop:sockkt}, every solution  $(\bm{\varphi},\z,\q,\bm{\alpha},\bm{\beta})$ to the KKT system \eqref{eq:kkt} yields a solution $(\bm{\varphi},\q,\bm\Lambda_0,\bar{\bm\Lambda},\bm\Lambda_q)$ to the KKT system \eqref{eq:kkt-dual}, where $(\bm\Lambda_0,\bar{\bm\Lambda},\bm\Lambda_q):=(\bm\alpha_0,\bar{\bm\alpha},-\bm{\alpha}_0)$.
Moreover, $\bm z$ is the auxiliary variable introduced in \eqref{eq:opt-dualsocalm} when the conic constraint $\B\F\q+\d \in \mathds{Q}$ in \eqref{eq:opt-dualsoc} is rewritten as $\z = \B\F\q + \d$ and $\z \in\mathds{Q},$ and $\bm \beta$ is the multiplier associated with the equality constraint $\z - \B\F\q - \d = 0$.
The next section focuses on the algorithmic aspects of solving the KKT system \eqref{eq:kkt}. In contrast, the practical implementation in \Cref{sec:numerical} employs stopping criteria based on the KKT system \eqref{eq:kkt-dual}, which provides a more appropriate metric for assessing the solution quality of the original discretized problem \eqref{eq:opt-disdualori}.

\section{An inexact proximal ALM for solving SOCP reformulation}
\label{sec:alg}
This section proposes an inexact decomposition-based proximal ALM to solve the equivalent SOCP reformulation \eqref{eq:opt-dualsocalm} of the discrete DOT problem.
Recall that the augmented Lagrangian function of  \eqref{eq:opt-dualsocalm} is defined by
\begin{equation}
	\label{alf}
	\begin{array}{ll}
		\displaystyle
		\mathfrak{L}_{\sigma}(\bm{\varphi},\z,\q;\bm{\alpha},\bm{\beta})&:= \langle \c, \bm{\varphi}\rangle + \delta_{\mathds{Q}}(\z)+\langle\mathcal{A}\bm{\varphi}-\q,\bm{\alpha}\rangle+\langle \z-\B\mathcal{F}\q-\d,\bm{\beta} \rangle \\[1mm]
		& \displaystyle
		\quad+ \frac{\sigma}{2}\|\mathcal{A}\bm{\varphi}-\q\|^2 + \frac{\sigma}{2}\|\z-\B\mathcal{F}\q-\d\|^2. 
	\end{array}
\end{equation}
From \eqref{eq:kerA} we know that the discrete gradient operator $\A$ has a nonempty kernel space, so the linear operator $\A^*\A$ is singular. Consequently, minimizing the augmented Lagrangian function with respect to $\bm{\varphi}$ can lead to an unbounded set of solutions, which is generally not suitable for establishing the convergence of ALM-type or ADMM-type algorithms to a solution of the KKT system. 
To address this issue, we consider the following problem 
\begin{equation}
	\label{eq:opt-dualsocalm-ran}
	\min\limits_{\bm{\varphi},\z,\q}
	\big\{ 
	\langle \c, \bm{\varphi}\rangle + \delta_{\mathds{Q}}(\z)
	\mid
	\mathcal{M}(\bm\varphi;\z) +\mathcal{N}\q= (\bm 0; \d), 
	\
	\bm{\varphi}\in \mathrm{rge}(\A^*)
	\big\},
\end{equation}
i.e., restricting $\bm{\varphi}\in \mathrm{rge}(\A^*)$ in \eqref{eq:opt-dualsocalm}. 
Accordingly, the KKT system of \eqref{eq:opt-dualsocalm-ran} is given by
\begin{equation}\label{eq:kkt2}
	\begin{cases}
		\mathcal{M}(\bm\varphi;\bm z)+\mathcal{N}\q=(\bm0 ;\d),\  \bm{\varphi}\in\mathrm{rge}(\A^*),
		\\ 
		(\A^*\bm\alpha;\bm\alpha) + (\bm 0;\F^*\B^* \bm\beta)  =  (-\bm c;\bm 0),
		\\
		\mathds{Q}\ni \z \perp \bm{\beta} \in  \mathds{Q}.
	\end{cases}
\end{equation} 

\begin{remark}
	\label{remarkkkt}
	One can see that any solution of the KKT system \eqref{eq:kkt2} is also a solution of \eqref{eq:kkt}. 
	Moreover, for a given solution $(\bm{\varphi},\z,\q,\bm{\alpha},\bm{\beta})$ of the KKT system \eqref{eq:kkt}, it is easy to see that $(\operatorname{\Pi}_{\mathrm{rge}(\A^*)}(\bm{\varphi}),\z,\q,\bm{\alpha},\bm{\beta})$ is also a solution of \eqref{eq:kkt2}. 
\end{remark}

The inexact proximal ALM to solve \eqref{eq:opt-dualsocalm-ran} is presented as \Cref{alg:socinpalm}. 
\begin{algorithm}[ht]
	\caption{An inexact proximal ALM for solving the SOCP \eqref{eq:opt-dualsocalm-ran}}
	\label{alg:socinpalm}
	\KwIn{
		a penalty parameter $\sigma > 0$, 
		a dual step length $\tau \in (0, 2)$, and an initial point $(\bm{q}^{(0)}, \bm{\alpha}^{(0)}, \bm{\beta}^{(0)})$ in $ \mathds{R}^{|\mathcal{G}_0^s|+ \cdots + |\G_D^s|}\times\mathds{R}^{|\mathcal{G}_0^s|+ \cdots + |\G_D^s|}\times\mathds{Q}$.}
	\KwOut{the infinite sequence $\{(\bm{\varphi}^{(k)}, \z^{(k)}, \bm{q}^{(k)}, \bm{\alpha}^{(k)}, \bm{\beta}^{(k)})\}$.}
	\For{{$k=0,1,\ldots$}}{
		$(\bm{\varphi}^{(k+1)}, \bm{z}^{(k+1)}) \gets \mathop{\argmin}\limits_{\bm{\varphi}, \bm{z}} \Big\{ \mathfrak{L}_{\sigma}(\bm{\varphi}, \bm{z}, \bm{q}^{(k)}; \bm{\alpha}^{(k)}, \bm{\beta}^{(k)})\mid \bm{\varphi}\in \mathrm{rge}(\A^*)\Big\}$;
		
		$\bm{q}^{(k+1)} \gets \mathop{\argmin}\limits_{\bm{q}} \Big\{ \mathfrak{L}_{\sigma}(\bm{\varphi}^{(k+1)}, \bm{z}^{(k+1)}, \bm{q}; \bm{\alpha}^{(k)}, \bm{\beta}^{(k)}) \Big\}$;
		
		$(\bm{\alpha}^{(k+1)}, \bm{\beta}^{(k+1)}) \gets (\bm{\alpha}^{(k)}; \bm{\beta}^{(k)}) + \tau \sigma \left( \mathcal{M} \begin{pmatrix}
			\bm{\varphi}^{(k+1)} \\ \bm{z}^{(k+1)}
		\end{pmatrix} + \mathcal{N} \bm{q}^{(k+1)} - \begin{pmatrix}
			0 \\ \bm{d}
		\end{pmatrix} \right)$.
	}
\end{algorithm}

The following result ensures the convergence of \Cref{alg:socinpalm} without requiring any additional assumptions or conditions.
\begin{theorem}
	\label{thmconv}
	The sequence $\{(\bm{\varphi}^{(k)},\z^{(k)},\q^{(k)},\bm{\alpha}^{(k)},\bm{\beta}^{(k)})\}$ generated by \Cref{alg:socinpalm} converges to a solution of the KKT system \eqref{eq:kkt2}, and hence \eqref{eq:kkt}.  
\end{theorem}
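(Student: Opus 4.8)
The plan is to exhibit Algorithm~\ref{alg:socinpalm} as a concrete instance of the inexact decomposition-based proximal ALM analyzed in \cite{cl21} and then invoke its convergence guarantee \cite[Theorem~4.2]{cl21} almost verbatim. First I would recast \eqref{eq:opt-dualsocalm-ran} in the abstract two-block form $\min_{u,v}\,f(u)+g(v)$ subject to $\mathcal M u+\mathcal N v=(\bm 0;\d)$, where $u=(\bm\varphi;\z)$ carries $f(u)=\langle\c,\bm\varphi\rangle+\delta_{\mathds Q}(\z)+\delta_{\mathrm{rge}(\A^*)}(\bm\varphi)$ and $v=\q$ carries $g\equiv 0$. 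With this identification, Steps~1--2 of the algorithm are precisely the successive block minimizations of $\mathcal L_\sigma$, and Step~3 is the multiplier update with step length $\tau\sigma$, matching the template of \cite{cl21}.

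Next I would verify the hypotheses that the cited theorem requires. The central one---existence of a KKT point---is already supplied: \Cref{prop:sockkt} shows the solution set of \eqref{eq:kkt} is nonempty, and the paragraph preceding \eqref{eq:kkt2} shows every such point projects onto a solution of \eqref{eq:kkt2}, so the solution set of \eqref{eq:kkt2} is nonempty. The two remaining structural requirements are the nonsingularity of $\mathcal N^*\mathcal N$ and the positive semidefiniteness of the proximal operator $\mathcal S$. For the former, $\mathcal N\q=(-\q;-\B\F\q)$ gives $\mathcal N^*\mathcal N=\mathcal I_{\q}+\F^*\B^*\B\F$, which by \Cref{prop:fbbf} is diagonal with strictly positive entries (the $\mathcal I_{\q}$ summand alone bounds it below by the identity); hence $(\mathcal N^*\mathcal N)^{-1}$ exists and $\mathcal S=\big((\sigma\,\mathcal M^*\mathcal N(\mathcal N^*\mathcal N)^{-1}\mathcal N^*\mathcal M)(\bm\varphi;\z);\bm 0\big)$ is a self-adjoint positive semidefinite operator, being a congruence of the positive definite $(\mathcal N^*\mathcal N)^{-1}$. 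The restriction $\bm\varphi\in\mathrm{rge}(\A^*)$ is what renders $\A^*\A$ invertible on the feasible subspace (recall \eqref{eq:kerA}), so that Step~1 has a unique solution and the operator appearing in the consolidated subproblem \eqref{eq:phizq} is well defined.

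It then remains to address the inexactness. Following \cite[Theorem~4.1]{cl21}, the two successive minimizations in Steps~1--2, viewed as a single minimization over $(\bm\varphi,\z,\q)$, coincide with the proximal-ALM subproblem \eqref{eq:phizq} augmented by $\tfrac12\|\cdot\|_{\mathcal S}^2$. Since each block is solved exactly---the $\bm\varphi$-update via fast Fourier transform, the $\z$-update via the closed form of \Cref{app-proj}, and the $\q$-update via the diagonal inverse from \Cref{prop:fbbf}---the associated error sequence can be taken identically zero, which is trivially summable and meets the inexactness criterion. With all hypotheses in force and $\tau\in(0,2)$ as assumed, \cite[Theorem~4.2]{cl21} yields convergence of $\{(\bm\varphi^{(k)},\z^{(k)},\q^{(k)},\bm\alpha^{(k)},\bm\beta^{(k)})\}$ to a solution of \eqref{eq:kkt2}, and the inclusion of solution sets noted above then gives a solution of \eqref{eq:kkt}.

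The main obstacle I anticipate is the faithful matching of Algorithm~\ref{alg:socinpalm} to the precise template of \cite{cl21}---in particular, confirming that the designated operator $\mathcal S$ is exactly the one whose addition converts the two-block cyclic minimization into a genuine inexact ALM step enjoying the enlarged dual range $\tau\in(0,2)$, rather than the classical ADMM range $(0,\tfrac{1+\sqrt5}{2})$. This hinges on the identity \eqref{eq:phizq} supplied by \cite[Theorem~4.1]{cl21} together with the well-posedness enabled by restricting $\bm\varphi$ to $\mathrm{rge}(\A^*)$; once that correspondence is pinned down, the convergence conclusion is a direct citation requiring no further assumptions.
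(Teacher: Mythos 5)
Your proposal is correct and follows essentially the same route as the paper: the paper also obtains this theorem by combining the nonemptiness of the KKT solution set from \Cref{prop:sockkt} (together with the observation that projecting $\bm{\varphi}$ onto $\mathrm{rge}(\A^*)$ yields a solution of \eqref{eq:kkt2}) with the equivalence of Steps 1--2 to the consolidated proximal subproblem \eqref{eq:phizq} via \cite[Theorem 4.1]{cl21}, and then invoking \cite[Theorem 4.2]{cl21} for $\tau\in(0,2)$. Your additional verifications (diagonal positive definiteness of $\mathcal N^*\mathcal N=\mathcal I_{\q}+\F^*\B^*\B\F$ from \Cref{prop:fbbf}, positive semidefiniteness of $\mathcal S$, and the vanishing error sequence from exact block solves) are exactly the details the paper leaves implicit.
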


\begin{proof}
	\Cref{alg:socinpalm} is an instance of \cite[Algorithm sGS-inPADMM]{cl21} applied to \eqref{eq:opt-dualsocalm-ran} without error terms and proximal terms, so we obtain the convergence of the former by examining the convergence theorem \cite[Theorem 4.2]{cl21} for the latter. 
	According to \Cref{remarkkkt} and \Cref{prop:sockkt}, the solution set to the KKT system \eqref{eq:kkt2} is nonempty, so \cite[Assumption 1]{cl21} is satisfied.
	Note that the operator $\A^*\A$ restricted to the subspace $\mathrm{rge}(\A^*)$ is positive definite, so it is easy to see from 
	\eqref{alf} that all the subproblems in \Cref{alg:socinpalm} are strongly convex. 
	Moreover, since \Cref{alg:socinpalm} does not involve proximal terms and the subproblems are strongly convex, we know that \cite[Assumption 2]{cl21} is also satisfied. 
	To invoke \cite[Theorem 4.2~(e)]{cl21}, we identify the first primal block variable $y$ in \cite[Theorem 4.2]{cl21} with $(\bm\varphi, \bm z)$ in \eqref{eq:opt-dualsocalm-ran}, and the second primal block variable $z$ is identified with $\bm q$ in \eqref{eq:opt-dualsocalm-ran}. 
	Consequently, the linear operator $\mathcal{G}$ in \cite{cl21} corresponds to $\mathcal{N}^*$. Hence, the surjectivity condition on $\mathcal{G}$ required in \cite[Theorem 4.2~(e)]{cl21} is the surjectivity of $\mathcal{N}^*$.
	For the linear operator $\mathcal{N} \q := (-\q;-\B\F\q)$ in \eqref{eq:opt-dualsocalm-ran}, its adjoint is given by 
	$\mathcal{N}^* (\bm\alpha,\bm\beta) = -\bm\alpha - \mathcal{F}^*\mathcal{B}^*\bm\beta$. 
	For any $\bm w \in \mathds{R}^{|\G^s_0|+\cdots|\G^s_D|}$, taking $\hat{\bm \alpha}: = -\bm w$ and $\hat{\bm\beta}: = \bm 0$ yields $\mathcal{N}^* (\hat{\bm\alpha},\hat{\bm\beta}) = \bm w$. Thus, $\mathcal{N}^*$ is surjective.
	Then, it follows from the conclusion (e) of \cite[Theorem 4.2]{cl21}
	that \Cref{thmconv} holds.
\end{proof}

The following remark explains why we adopt \Cref{alg:socinpalm} to solve \eqref{eq:opt-dualsocalm-ran}.

\begin{remark}
	When directly applying the ALM \cite{rock-alm} to \eqref{eq:opt-dualsocalm-ran}, 
	one needs to solve the subproblem at the $k$-th iteration in the following form 
	\begin{equation}\label{eq:inalm-sub}
		(\bm{\varphi}^{(k+1)},\z^{(k+1)},\q^{(k+1)})\approx \mathop{\argmin}\limits_{\bm{\varphi},\z,\q} \Big\{\mathfrak{L}_{\sigma}(\bm{\varphi},\z,\q;\bm{\alpha}^{(k)},\bm{\beta}^{(k)})\mid \bm{\varphi}\in \mathrm{rge}(\A^*)\Big\}. 
	\end{equation}
	The coupling of variables makes it difficult to compute a solution to this subproblem. 
	In contrast, the $k$-th iteration of the ADMM \cite{glowinski75,gabay76,clcoap} minimizes the objective in \eqref{eq:inalm-sub} with respect to  $(\bm{\varphi},\z)$ and $\q$ successively for only one cycle. 
	This can result in significantly more iterations than ALM. 
	Thanks to the theoretical analysis in \cite{cl21}, we know that for solving \eqref{eq:opt-dualsocalm-ran}, the error in each iteration of the ADMM, when viewed as an inexact ALM, can be quantified by introducing a proximal term. 
	The resulting \Cref{alg:socinpalm} is similar to the classic ADMM and appears to be an instance of the proximal ADMM \cite{fazel13}. 
	Nevertheless,  the condition $\tau\in  (0,2)$ on the dual step length distinguishes it from ADMM-type algorithms. 
	In the latter, convergence generally requires $\tau\in(0,\frac{1+\sqrt{5}}{2})$. 
	Thus, the convergence analysis of \Cref{alg:socinpalm} does not follow directly from \cite{fazel13} or other literature, but relies on \cite{cl21}. 
	Based on \cite[Theorem 4.1]{cl21}, we know that Steps 1 and 2 in \Cref{alg:socinpalm} can be integrated as a single step
	$$
	\begin{array}{ll}
		\displaystyle
		(\bm{\varphi}^{(k+1)},\z^{(k+1)},\q^{(k+1)}) &\approx \mathop{\argmin}\limits_{\bm{\varphi},\z,\q} \Big\{\mathfrak{L}_{\sigma}(\bm{\varphi},\z,\q;\bm{\alpha}^{(k)},\bm{\beta}^{(k)}) \\[1mm]
		&\displaystyle
		\quad+ \frac{1}{2}\big\|(\bm{\varphi};\z;\q)-(\bm{\varphi}^{(k)};\z^{(k)};\q^{(k)})\big\|^2_{\mathcal{S}}\mid \bm{\varphi}\in \mathrm{rge}(\A^*)\Big\},
	\end{array} 
	$$
	where the error can be controlled by a summable sequence of non-negative real numbers, and the self-adjoint positive semidefinite linear operator $\mathcal{S}$ is defined by
	\begin{equation*}
		\mathcal{S}(\bm{\varphi};\z;\q) :=  
		\Big((\sigma\mathcal{M}^*\mathcal{N} \big(\mathcal{N}^*\mathcal{N} \big)^{-1}\mathcal{N}^*\mathcal{M})(\bm{\varphi};\z); \bm{0}\Big).
	\end{equation*}
	This equivalence may partially clarify why ADMM can deliver robust computational performance for DOT problems in the existing literature \cite{bb00,bb15}.
\end{remark}

To conclude this section, we present the detailed implementation of \Cref{alg:socinpalm}. 
According to \eqref{alf}, the augmented Lagrangian function $\mathfrak{L}_\sigma$ is separable with respect to $\bm{\varphi}$ and $\z$. 
Thus, the simultaneous minimization of obtaining $(\bm\varphi^{(k+1)},\z^{(k+1)})$ in \Cref{alg:socinpalm} can be separated in parallel as 
$$
\begin{cases}
	\bm\varphi^{(k+1)} \gets \mathop{\argmin}\limits_{\bm\varphi \in \mathrm{rge}(\A^*)}  \big\{\langle \bm c,\bm\varphi\rangle+\langle \A\bm\varphi-\q^{(k)},\bm\alpha^{(k)}\rangle + \frac{\sigma}{2}\|\A\bm\varphi-\q^{(k)}\|^2\big\}, \\
	\z^{(k+1)} \gets \mathop{\argmin}\limits_{\z\in\mathds{Q}}\big\{\langle \z -\B\F\q^{(k)}-\d,\bm\beta^{(k)}\rangle + \frac{\sigma}{2}\|\z -\B\F\q^{(k)}-\d\|^2\big\}.
\end{cases}
$$
For the quadratic optimization to obtain $\bm\varphi^{(k+1)}$, the first-order optimality condition leads $\bm\varphi^{(k+1)}$ to the unique solution in $\mathrm{rge}(\A^*)$ of the linear system
$$
\A^*\A \bm\varphi = \b^{(k+1)}:= \A^* (\q^{(k)}-\bm\alpha^{(k)}/\sigma) - \bm c / \sigma.
$$
Note that the operator $\A^*\A$ is the standard discrete Laplacian on a staggered grid with a Neumann boundary condition. 
So we utilize the discrete cosine transform (DCT) to diagonalize this system (see \cite[Section 4.8]{matrixcomp} for details). 
In our implementation, the DCT is executed using the FFT, which reduces the computational complexity to $\mathcal{O}(N\log N)$, where $N$ is the total number of grid points. 

The subproblem of obtaining $\z^{(k+1)}$ is a projection onto the set $\mathds{Q}$, the Cartesian product of several second-order cones. 
Thus, using the explicit formula provided in \eqref{app-proj}, whose efficiency was demonstrated in \Cref{tab:compareProj}, 
this subproblem is solved with linear complexity $\mathcal{O}(N)$. 
Also, according to \eqref{alf}, the update for $\q^{(k+1)}$ in \Cref{alg:socinpalm} minimizes a convex quadratic function. 
The optimality condition yields $\q^{(k+1)}$ as the solution to the nonsingular linear system
$$
(\mathcal{I}_{\q}+ \mathcal{F}^*\B^*\B\mathcal{F}) \q^{(k+1)} = \A\bm{\varphi}^{(k+1)}+\bm{\alpha}^{(k)}/\sigma+\mathcal{F}^*\B^*(\z^{(k+1)}-\d+\bm{\beta}^{(k)}/\sigma),
$$
where $\I_{\q}$  denotes the identity operator on the underlying space of $\q$.
According to Proposition \ref{prop:fbbf}, the operator $\mathcal{I}_{\q}+ \mathcal{F}^*\B^*\B\mathcal{F}$ is diagonal. Thus, this system can be solved element-wise with a linear complexity $\mathcal{O}(N)$. 
To summarize, the detailed implementation of \Cref{alg:socinpalm} is given by \Cref{alg:inpalm-exp}.
\begin{algorithm}[ht]
	\caption{Explicit computational steps of \Cref{alg:socinpalm}}
	\label{alg:inpalm-exp}
	\KwIn{
		a penalty parameter $\sigma > 0$, 
		a dual step length $\tau \in (0, 2)$, and an initial point $(\bm{q}^{(0)}, \bm{\alpha}^{(0)}, \bm{\beta}^{(0)})$ in $ \mathds{R}^{|\mathcal{G}_0^s|+ \cdots + |\G_D^s|}\times\mathds{R}^{|\mathcal{G}_0^s|+ \cdots + |\G_D^s|}\times\mathds{Q}$.}
	\KwOut{the infinite sequence $\{(\bm{\varphi}^{(k)}, \z^{(k)}, \bm{q}^{(k)}, \bm{\alpha}^{(k)}, \bm{\beta}^{(k)})\}$.}
	\For{\textnormal{$k=0,1,\ldots$}}{
		
		$\b^{(k+1)}\gets\A^*\left(\q^{(k)}-\bm{\alpha}^{(k)}/\sigma\right)-\c/\sigma$;
		\vspace{0.3em}
		
		$\bm{\varphi}^{(k+1)}\gets$ \textnormal{ Solve }$\A^*\A\bm{\varphi} = \b^{(k+1)}$\ \text{via fast Fourier transform};
		\vspace{0.3em}
		
		$\z^{(k+1)}\gets\operatorname{\Pi}_{\mathds{Q}}\left(\B\mathcal{F}\q^{(k)}+\d-\bm{\beta}^{(k)}/\sigma\right)$;
		\vspace{0.3em}
		
		$\bm{q}^{(k+1)} \gets (\mathcal{I}_{\q}+ \mathcal{F}^*\B^*\B\mathcal{F})^{-1}(\A\bm{\varphi}^{(k+1)}+\bm{\alpha}^{(k)}/\sigma+\mathcal{F}^*\B^*(\z^{(k+1)}-\d+\bm{\beta}^{(k)}/\sigma))$;
		\vspace{0.3em}
		
		$
		\bm{\alpha}^{(k+1)} \gets \bm{\alpha}^{(k)}+\tau\sigma (\A\bm{\varphi}^{(k+1)}-\q^{(k+1)})
		$;
		\vspace{0.3em}
		
		$
		\bm{\beta}^{(k+1)} \gets \bm{\beta}^{(k)} +\tau\sigma (\z^{(k+1)}-\B\F\q^{(k+1)}-\d)
		$.}
\end{algorithm}

\section{Numerical experiments}
\label{sec:numerical}
In this section, we conduct extensive numerical experiments to test the performance of the algorithm proposed in the previous section and compare it with other promising approaches. 
We will briefly review two well-developed software packages that can be applied to discrete DOT problems. 
In addition to \Cref{alg:socinpalm}, we list a few augmented Lagrangian-based algorithms to solve the SOCP reformulation \eqref{eq:opt-dualsocalm-ran}. 
Then, we evaluate the performance of all these software packages and algorithms.
High-quality transportation trajectories for all the tested problems (computed by our software) can be found in Supplementary Material (Online Resource 1), along with the videos for visualizing the transportation (Online Resource 2). 

\subsection{Off-the-shelf software packages}
The software package\footnote{\url{https://github.com/gpeyre/2013-SIIMS-ot-splitting}} of \cite{papa14} is based on primal-dual methods, including the
Douglas-Rachford splitting and the primal-dual hybrid gradient method, referred to as DR and PD.
In \cite{yjj24}, a multilevel-FISTA is used to solve discrete DOT problems\footnote{\url{https://github.com/Jiajia-Yu/FISTA_MFP_euc}}. These algorithms are based on the primal discrete DOT formulation \eqref{eq:opt-ene-bb} on staggered finite-difference grids: 
\begin{equation*}
	\min_{\bm{u}\in  \mathds{R}^{|\mathcal{G}^s_0|+\cdots+ |\mathcal{G}^s_D|}} \mathcal{J}(\mathcal{E}\bm{u}) + \delta_{\mathds{C}}(\bm{u}),
\end{equation*}
where $\mathcal{J}(\cdot)$ is the discrete objective function defined in \eqref{eq:opt-ene-bb}, 
and $\delta_{\mathds{C}}(\cdot)$ is the indicator function of the discrete continuity equation (see \cite[Section 3.5]{papa14} for more details). The operator $\mathcal{E}$ serves as an interpolation matrix, aligning each column of $\bm{u}$ into a unified grid.

When implementing the software package provided in \cite{papa14}, we found that both the DR and PD methods are sensitive to their step sizes, and we can use a finer step size rule given in \cite{chambolle} without introducing a computational burden to adjust them. 
This can reduce the number of iterations of these methods.  
So, we denote the corresponding modified algorithms as DRc and PDc. 
Moreover, we also found that the code of the multilevel-FISTA released in \cite{yjj24} includes a slight modification compared to the algorithm proposed in their paper. 
Specifically, when updating the density function, the authors impose a positive lower bound to ensure that the density remains positive throughout the iteration.
We observed that this modification can enhance the practical performance of the multilevel-FISTA, although the convergence of the modified algorithm is unknown. 
However, since such a modification performs quite well, we implement the software package of \cite{yjj24} using this modification.

\subsection{Augmented Lagrangian-based methods for comparison}
In addition to the inexact proximal ALM (inpALM) given by \Cref{alg:socinpalm},  several other numerical methods based on the augmented Lagrangian function \eqref{alf} can be used to solve the SOCP reformulation \eqref{eq:opt-dualsocalm}. 
Here, we tested some of them that have the potential to solve \eqref{eq:opt-dualsocalm} efficiently. 
The details of these algorithms are collected in Supplementary Material (Online Resource 1).
\begin{itemize}
	\item[-] {ALG2}: the classic ADMM with a unit dual step length. 
	\item[-] pALM: 
	A proximal ALM utilizing the symmetric Gauss-Seidel (sGS) decomposition technique \cite{lxd16,lxdsgs} to decouple variables efficiently.
	\item[-] sGS-inpALM:
	A variant of pALM that incorporates a reordering of the linear system with the coefficient matrix $\A^*\A$ in \Cref{alg:inpalm-exp} (based on the techniques in \cite[Section 12.4]{saad} and \cite{lyl21}) and the application of the sGS decomposition \cite{cl17,cl21,xiao2018}.
	\item[-] acc-pADMM:  An accelerated version of the preconditioned ADMM in \cite{accadmm}. 
	\item[-] acc-sGS-pADMM: an extension of acc-pADMM incorporating the sGS decomposition for the subproblems. 
\end{itemize}
Both \Cref{alg:socinpalm} and the pALM use a large (dual) step length $\tau = 1.9$. 
For the sGS-inpALM, we employ a red-black ordering technique (see \cite[Section 12.4]{saad}) and use $\tau = 1.8$. 
Based on the numerical observations in \cite[Section 4]{accadmm}, for the acc-pADMM, we fix
$\rho=2$ and explore two values of $\theta$ ($2$ and $15$). 
Additionally, for acc-sGS-pADMM, we set  $\rho=2$ and  $\theta=2$.

We make the following remark regarding the iteration complexity of the algorithms employed in the numerical experiments. 

\begin{remark}
	All algorithms here fall within the category of first-order methods and theoretically have a sublinear convergence rate. 
	Specifically, the inexact proximal ALM exhibits a non-ergodic iteration complexity of $o(1/\sqrt{k})$ for the KKT residual \cite{cl21}. 
	For the accelerated preconditioned ADMM, it was established in \cite{accadmm} that it achieves a non-ergodic rate of $\mathcal{O}(1/k)$ in terms of the KKT residual. 
	Regarding the standard ADMM with a unit dual step length, \cite{monteiro} first established an ergodic $\mathcal{O}(1/k)$ rate with respect to the $\varepsilon$-subdifferential, whereas \cite{davis} proved a non-ergodic iteration complexity of $o(1/\sqrt{k})$, focusing on primal feasibility violations and the gap in the primal objective function value. 
	Furthermore, \cite{chambolle} demonstrated that PDHG achieves an ergodic $\mathcal{O}(1/k)$ complexity in terms of the primal-dual gap. 
	Finally, FISTA possesses a non-ergodic $\mathcal{O}(1/k^2)$ complexity for the objective value when the differentiable component possesses a Lipschitz continuous gradient \cite{fista}, which is satisfied if the density admits a positive lower bound.
\end{remark}

\subsection{Implementation details}
The initial data are first scaled consistently across all evaluated algorithms.
When the multilevel strategy is employed, we follow \cite{ljl21} to obtain an initial solution on a coarse grid, which is then successively refined through a sequence of increasingly finer grids. 
Since our primary goal is to solve the discrete DOT problem \eqref{eq:opt-disdualori}, the stopping criteria of our algorithms are based on the corresponding optimality condition \eqref{eq:kkt-dual}.
Specifically, we measure the accuracy of a computed solution to the discrete DOT and its dual via  
$$
\eta_{\mathrm{dot}} \eqdef \max\{\eta_P,\eta_D,\eta_C\} 
$$
\begin{equation*}
	\mbox{with}\ 
	\begin{cases} 
		\eta_P \eqdef \frac{\|\A\bm{\varphi}-\q\|_{L^2}}{1+\|\A\bm{\varphi}\|_{L^2}+\|\q\|_{L^2}}, \quad
		\eta_D \eqdef \frac{\|\A^*\bm{\alpha}+\c\|_{L^2}}{1+\|\c\|_{L^2}},\\[2mm]
		\eta_C \eqdef\max\Big\{ \frac{\|\bm{\alpha}_0-\max\{0,f(\q)+\bm{\alpha}_0\}\|_{L^2}}{1+\|\bm{\alpha}_0\|_{L^2}+\|f(\q)\|_{L^2}}, \frac{\|\bar{\bm{\alpha}}- g(\bm{\alpha}_0,\bar{\q})\|_{L^2}}{1 + \|\bar{\bm{\alpha}}\|_{L^2}+\|g(\bm{\alpha}_0,\bar{\q})\|_{L^2}}\Big\} ,
	\end{cases}
\end{equation*}
where 
$\bm{\alpha}\equiv(\bm{\alpha}_0;\bar{\bm{\alpha}})$, $f(\q)\eqdef\q_0+\frac{1}{2}\mathcal{L}_{\T}\mathcal{L}_{\X}^*(|\q_1|^2;\ldots;|\q_D|^2)$, $\bar{\q} \equiv (\q_1;\ldots;\q_D)$, and $g(\bm{\alpha}_0,\bar{\q})\eqdef (g_1(\bm{\alpha}_0,\q_1);\ldots; g_D(\bm{\alpha}_0,\q_D))$ with  $g_d(\bm{\alpha}_0,\q_d) \eqdef \left(\mathcal{L}^d_{\X}\mathcal{L}^*_{\T}\bm{\alpha}_0\right) \odot \q_d$.  
Here, $\|\cdot\|_{L^2}$ denotes the discrete $L^2$ norm.
To ensure a consistent and rigorous comparison\footnote{The original implementations of the {DR} and {PD} in \cite{papa14} terminate after a pre-specified number of iterations. In contrast, the multilevel-FISTA in \cite{yjj24} utilizes the discrete $L^2$ norm of the difference between consecutive iterates as a heuristic to quantify the solution error.}, all the evaluated algorithms terminate when $\eta_{\mathrm{dot}}$ falls below a given tolerance \texttt{Tol}.  

We also adaptively adjust the penalty parameter $\sigma$ on the primal-dual KKT system  \eqref{eq:kkt}, instead of \eqref{eq:kkt-dual}, because the numerical implementation is based on the augmented Lagrangian function \eqref{alf} of \eqref{eq:opt-dualsocalm}. 
By doing this, we restart the algorithms with a new penalty parameter, using the most recent iterate as the initial starting point. 
For this purpose, define 
\begin{equation*}
	\begin{array}{ll}
		\eta^{\mathrm{soc}}_P\eqdef \max\Big\{ \eta_P, \frac{\|\z-\B\mathcal{F}\q-\d\|_{L^2}}{1+\|\d\|_{L^2}} \Big\} \ \mbox{and}\  
		\eta^{\mathrm{soc}}_D \eqdef\max \Big\{\eta_D,\frac{\|\mathcal{F}^*\B^*\bm{\beta}+\bm{\alpha}\|_{L^2}}{1+\|\mathcal{F}^*\B^*\bm{\beta}\|_{L^2}+\|\bm{\alpha}\|_{L^2}}\Big\}. 
	\end{array}
\end{equation*}
For algorithms without acceleration, we follow the strategy used in \cite{lam,tang23} to update $\sigma$ at specific iterations based on the ratio $\eta_{P}^{\text{soc}} / \eta_{D}^{\text{soc}}$ with the scheme
\begin{equation*}
	\sigma\gets 
	\begin{cases}
		s_{\sigma}\sigma & \text{if } \eta_{P}^{\text{soc}} / \eta_{D}^{\text{soc}} > s_r , \\
		\sigma/s_{\sigma} & \text{if } \eta_{P}^{\text{soc}} / \eta_{D}^{\text{soc}} < s_r^{-1}, \\
		\sigma  & \text{otherwise}.
	\end{cases}
\end{equation*}
Here, $s_{\sigma} > 1$ and $s_r > 1$ are pre-selected parameters, and we set $s_{\sigma} = 1.25$ and $s_r = 2$.
For acc-pADMM and acc-sGS-pADMM, we directly implement the technique in \cite{accadmm} to restart the algorithm, mainly based on the number of iterations. 



\subsection{Numerical results}
\label{subsec:num}
In this part, we report the numerical results of our experiments for four categories of DOT instances. 
All numerical experiments were implemented in MATLAB (R2023b) on a desktop PC with an Intel Core i9-9900KF CPU (8 cores, 3.60 GHz) and 64 GB of memory. 
The numerical results for the method proposed in this paper can be reproduced using the code on GitHub\footnote{\url{https://github.com/chlhnu/DOT-SOCP}}, where a Python version of the software is also available.

\subsubsection{Continuous densities in a flat domain}
We begin with continuous densities in the flat domain $[0,1]^{2}$.
Each example contains two functions $\varrho_0$ and $\varrho_1$, and we construct $\rho_0={\rm Normalize}(\varrho_0)+\delta$ and $\rho_1={\rm Normalize}(\varrho_1)+\delta$. 
Here, “Normalize” refers to normalizing a density to ensure that the discrete density sums to one, and $\delta\geq0$ is a constant to specify the uniform lower bound.
The resulting non-unit mass does not hinder the computation, provided that $\rho_0$ and $\rho_1$ have equal total mass.
The details of the five examples we tested are given as follows

\begin{example}[Ex-1]
	\label{exam1}
	Set $\chi := \sqrt{0.05}$, $(\mu_1,\mu_2) := (0.25,0.75)$, and define
	$$
	\begin{array}{ll}
		\varrho_0(\x) :=\mathrm{exp}\big(\frac{-(x_1-\mu_1)^2-(x_2-\mu_2)^2}{2\chi^2}\big)\quad\mbox{and}\quad 
		\varrho_1(\x): = \mathrm{exp}\big(\frac{-(x_1-\mu_2)^2-(x_2-\mu_1)^2}{2\chi^2}\big).
	\end{array}
	$$ 
	
\end{example}

\begin{example}[Ex-2]
	Set $(\chi_1, \chi_2) := (0.1,0.05)$, $(\mu_1,\mu_2) := (0.25,0.75)$, and define
	\begin{equation*}
		\begin{array}{ll}
			\varrho_0(\x) :=& \mathrm{exp}\big(\frac{-(x_1-\mu_1)^2-(x_2-\mu_1)^2}{2\chi_1^2}\big),\\[1mm]
			\varrho_1(\x) :=&  \mathrm{exp}\big(\frac{-(x_1-\mu_1)^2-(x_2-\mu_1)^2}{2\chi_2^2}\big)+\mathrm{exp}\big(\frac{-(x_1-\mu_1)^2-(x_2-\mu_2)^2}{2\chi_2^2}\big) \\[1mm]
			&+\mathrm{exp}\big(\frac{-(x_1-\mu_2)^2-(x_2-\mu_1)^2}{2\chi_2^2}\big)+\mathrm{exp}\big(\frac{-(x_1-\mu_2)^2-(x_2-\mu_2)^2}{2\chi_2^2}\big).
		\end{array}
	\end{equation*} 
\end{example} 

\begin{example}[Ex-3]
	Set $(a_1,a_2) := (3, 5)$, $(\mu_1,\mu_2) := (0.25,0.75)$, $\chi := 0.05$ and define
	\begin{equation*}
		\begin{array}{ll}
			\varrho_0(\x) :=&   \mathrm{exp}\big(-a_1|x_1-\mu_1|-a_2|x_2-\mu_1|\big) ,\\[1mm]
			\varrho_1(\x) := &  \mathrm{exp}\big(\frac{-(x_1-\mu_1)^2-(x_2-\mu_1)^2}{2\chi^2}\big)+\mathrm{exp}\big(\frac{-(x_1-\mu_1)^2-(x_2-\mu_2)^2}{2\chi^2}\big) \\[1mm]
			&+\mathrm{exp}\big(\frac{-(x_1-\mu_2)^2-(x_2-\mu_1)^2}{2\chi^2}\big)+\mathrm{exp}\big(\frac{-(x_1-\mu_2)^2-(x_2-\mu_2)^2}{2\chi^2}\big).
		\end{array}
	\end{equation*}
	
\end{example}

\begin{example}[Ex-4]
	Set $(c_1,c_2) := (0.5,0.5)$, $(\mu_1,\mu_2) := (0.25,0.75)$, $\chi := 0.05$, and define
	\begin{equation*}
		\begin{array}{ll}
			\varrho_0(\x) :=& (x_1-c_1)^4+(x_2-c_2)^4,
			\\[1mm]
			\varrho_1(\x) :=& \mathrm{exp}\big(\frac{-(x_1-\mu_1)^2-(x_2-\mu_1)^2}{2\chi^2}\big)+\mathrm{exp}\big(\frac{-(x_1-\mu_1)^2-(x_2-\mu_2)^2}{2\chi^2}\big)
			\\[1mm]
			&+\mathrm{exp}\big(\frac{-(x_1-\mu_2)^2-(x_2-\mu_1)^2}{2\chi^2}\big)+\mathrm{exp}\big(\frac{-(x_1-\mu_2)^2-(x_2-\mu_2)^2}{2\chi^2}\big).
		\end{array}
	\end{equation*}
\end{example}

\begin{example}[Ex-5]
	\label{example5} Set $\rho_0$ and $\rho_1$ as the indicator functions of two sets illustrated in the following images.
	\begin{center}
		\setlength{\fboxrule}{0.25pt}
		\setlength{\fboxsep}{ 0 pt}
		\fcolorbox{black}{white}{\includegraphics[width = .15\textwidth]{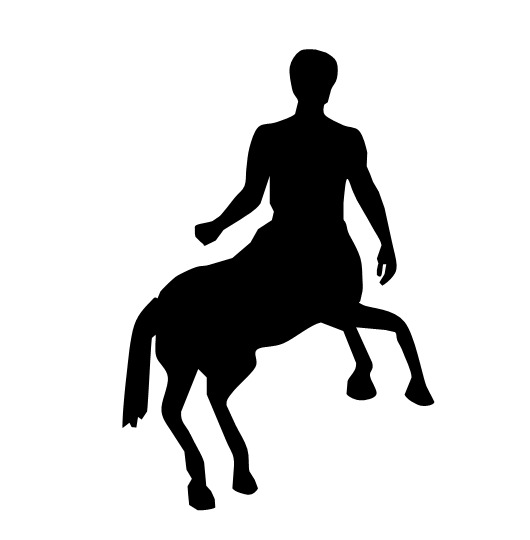}}
		\qquad 
		\fcolorbox{black}{white}{\includegraphics[width = .15\textwidth]{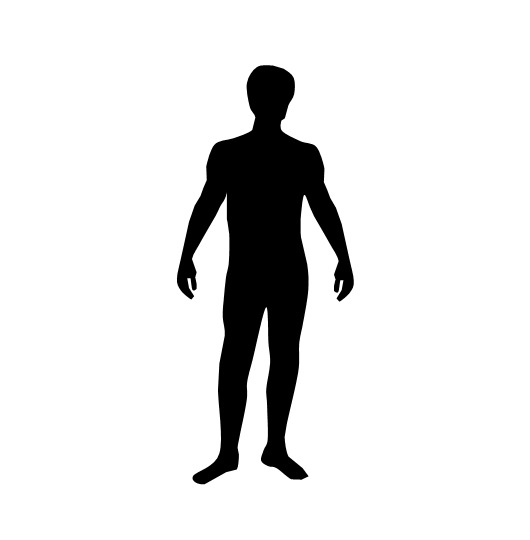}}
	\end{center}
\end{example}

Following the discrete framework in \Cref{subsec:dis}, we discretize all examples with grid sizes $(n_0,n_1,n_2) = (32,128,128)$ and $(n_0,n_1,n_2) = (64,256,256)$. We also select different values of $\delta$ to compare the robustness of different algorithms in handling non-negative probability densities. When employing the multilevel strategy, we first compute a solution with a precision of $\max\{\texttt{Tol}*10^{-v},10^{-6}\}$ on a certain grid and then increase the resolution. We apply the linear interpolation method proposed in \cite{ljl21} to the solutions on the coarse grids, using the interpolated results as initial points for computations on the subsequent finer grids. Specifically, for $(n_0,n_1,n_2) = (32,128,128)$, we use the grids $\left(\frac{n_0}{2^v},\frac{n_1}{2^v},\frac{n_2}{2^v}\right)$ for $v = 2,1$. For $(n_0,n_1,n_2) = (64,256,256)$, we use the grids $\left(\frac{n_0}{2^v},\frac{n_1}{2^v},\frac{n_2}{2^v}\right)$ for $v=3,2,1$. The maximum running time for each algorithm is set to 3,600 seconds.

We present our numerical results in Tables \ref{tab-gauss-tol3}, \ref{tab-gauss-tol4}, \ref{tab-admm}, and \ref{tab-admm-mul}. For a fair comparison, the reported computational times for all algorithms include the initialization phase via the multilevel strategy, where applicable (indicated by the iteration counts in these tables).

Tables \ref{tab-gauss-tol3} and \ref{tab-gauss-tol4} summarize the performance of {DR}, {PD}, {DRc}, {PDc}, multilevel-FISTA, and the proposed inpALM (\Cref{alg:socinpalm}) in solving \Cref{exam1} through \Cref{example5}. 
These evaluations were conducted across varying lower bounds $\delta$ and grid sizes, with the target precision set to \texttt{Tol}$=10^{-3}$ and $10^{-4}$, respectively.
As shown in Table \ref{tab-gauss-tol3}, {DR} consistently reaches the maximum time limit of 3,600 seconds across all test cases. While {DRc} and {PD} exhibit marginal improvements over {DR}, their achieved accuracy remains insufficient. In contrast, {PDc} successfully resolves more than half of the problem instances within the time limit. Regarding multilevel-FISTA, it reaches the time limit in all cases and occasionally diverges when $\delta$ is small (e.g., $0.05$ or $0$). Our observations indicate that while multilevel-FISTA converges rapidly in the initial stages, its progress significantly decelerates thereafter. This behavior is likely attributable to the safeguard mechanism within the multilevel-FISTA framework (\cite[Algorithm 1]{yjjmanifold}). Notably, inpALM emerges as the most efficient solver for the problems. 
Table \ref{tab-gauss-tol4} further highlights this advantage: when the target KKT-based relative error is tightened to $10^{-4}$, inpALM is the only algorithm capable of achieving the required precision within 1 hour.

\Cref{tab-admm} summarizes the performance of pALM, ALG2, acc-pADMM, acc-sGS-pADMM, and inpALM in solving \Cref{exam1} through \Cref{example5} based on the SOCP reformulation without using the multilevel strategy. The convergence tolerance was set to $\texttt{Tol} = 10^{-4}$. 

According to \Cref{tab-admm}, the inpALM consistently delivers superior performance, followed by pALM. We have omitted sGS-inpALM from \Cref{tab-admm} because its performance was found to be less competitive than that of acc-sGS-pADMM. Notably, while acc-sGS-pADMM performs effectively on grid sizes up to $(32, 128, 128)$, it encounters significant computational challenges when scaled to $(64, 256, 256)$.

In \Cref{tab-admm-mul}, we extend the evaluation by incorporating sGS-inpALM and applying the multilevel strategy across all solvers to generate initial points. 
Additionally, the iteration counts for each level are documented in the Supplementary Material (Online Resource  1). 
The results indicate that all algorithms benefit substantially from the multilevel initialization, achieving the $10^{-4}$ accuracy threshold more rapidly. Interestingly, for \Cref{exam1}, ALG2 slightly outperforms inpALM. However, for all the other more complicated instances, the inpALM maintains its position as the most robust and efficient solver.

\begin{landscape}
	\begin{table} 
		\scriptsize
		\setlength\tabcolsep{1pt}
		\centering
		\caption{Comparison of algorithms for DOT problems with tolerance $\texttt{Tol}=10^{-3}$.
			The algorithms are abbreviated as M (multilevel-FISTA) and I (inpALM). In the table,  ``Iter''  denotes the number of iterations for every level of the grids, ``-'' implies the algorithm diverges, and ``1h" stands for 3600s
		}
		\label{tab-gauss-tol3}
		\begin{tabular}{lcccccccccccccccccccccccccccc}
\bottomrule[1pt]
\multirow{2}{*}{Problem} 
& \multirow{2}{*}{$\delta$} 
& & & 
\multicolumn{6}{c}{\(\eta_{\text{dot}}\)} & & & 
\multicolumn{6}{c}{Iter} & & & 
\multicolumn{6}{c}{Time (s)} \\
\cline{5-10} \cline{13-18} \cline{21-26}
          &            & &        & DR      & DRc      & PD      & PDc      & M & I                   & &           & DR      & DRc      & PD      & PDc      & M & I             & &           & DR      & DRc      & PD      & PDc      & M & I      \\
    \hline
Ex-1(32-128-128) &    0 		 & & 		 & 2.26-2 & 9.99-4 & 1.00-3 & 9.93-4 & 4.30-1 & 8.70-4 		 & & 		 &  7474 &  3084 &  4121 &    81 &  5001,509,3368 &  144,37,16    & & 		 &      1h &  1027.2 & 1349.8 &   27.1 &      1h &     \bf3.9	 \\

Ex-1(64-256-256) &    0 		 & & 		 & 3.20-1 & 6.96-3 & 7.21-2 & 9.98-4 & 4.29-1 & 7.81-4 		 & & 		 &  1038 &  1451 &  1502 &    72 &  5001,778,26,525 & 214,59,31,10 & & 		 &      1h &      1h &      1h &  168.1 &      1h &    \bf18.4	 \\

Ex-1(32-128-128) & 0.05 		 & & 		 & 2.04-2 & 9.99-4 & 1.00-3 & 9.86-4 & 3.89-1 & 7.84-4 		 & & 		 &  1482 & 1696 &  3103 &    72 &   5001,5001,7553 &  114,37,13    & & 		 &      1h &  559.9 & 1009.2 &   24.2 &       1h &     \bf3.4	 \\

Ex-1(64-256-256) & 0.05 		 & & 		 & 3.04-1 & 2.44-3 & 7.30-2 & 9.99-4 &       - & 7.89-4 		 & & 		 &  1038 &  1318 &  1500 &    67 &    - & 159,49,25,7    & & 		 &      1h &      1h &      1h &  156.2 &       - &    \bf13.3	 \\

Ex-1(32-128-128) &  0.1 		 & & 		 & 1.74-2 & 9.97-4 & 9.99-4 & 9.86-4 & 4.02-1 & 9.80-4 		 & & 		 &  7504 &   904 &  2682 &    67 &  142,44,5590 &   99,31,10      & & 		 &      1h &  299.7 &  866.6 &   23.1 &      1h &     \bf2.8	 \\

Ex-1(64-256-256) &  0.1 		 & & 		 & 2.90-1 & 1.00-3 & 7.27-2 & 9.86-4 & 4.01-1 & 7.10-4 		 & & 		 &  1038 &   969 &  1511 &    64 &    141,44,18,886    & 129,43,19,7    & & 		 &      1h &  2402.7&      1h &  150.4 &      1h &    \bf13.4	 \\

Ex-2(32-128-128) &    0 		 & & 		 & 1.88-2 & 1.00-3 & 1.00-3 & 9.99-4 &       - & 9.47-4 		 & & 		 &  7466 &  6880 &  8401 &   539 &      - &  414,144,19    & & 		 &      1h & 2341.5 & 2715 &  184.9 &       - &    \bf6.8	 \\

Ex-2(64-256-256) &    0 		 & & 		 & 4.02-1 & 1.13-2 & 4.29-2 & 1.00-3 &       - & 8.76-4 		 & & 		 &  1036 &  1438 &  1511 &   490 &      - & 649,264,31,13  & & 		 &      1h &      1h &      1h & 1160 &       - &    \bf26.0	 \\

Ex-2(32-128-128) & 0.05 		 & & 		 & 1.87-2 & 1.00-3 & 1.00-3 & 9.99-4 &       - & 9.82-4 		 & & 		 &  7474 &  2376 &  3834 &   489 &      - &  364,144,16    & & 		 &      1h &  803.9 & 1239 &  166.6 &       - &    \bf6.3	 \\
Ex-2(64-256-256) & 0.05 		 & & 		 & 3.58-1 & 6.17-3 & 4.39-2 & 9.99-4 &       - & 8.86-4 		 & & 		 &  1038 &  1430 &  1512 &   453 &      - & 529,239,31,13  & & 		 &      1h &      1h &      1h & 1060.1 &       - &    \bf25.3	 \\

Ex-2(32-128-128) &  0.1 		 & & 		 & 1.92-2 & 9.98-4 & 1.00-3 & 9.99-4 & 4.58-1 & 9.15-4 		 & & 		 &  7476 &  1580 &  3218 &   449  &  564,347,5562 & 339,114,16    & & 		 &      1h &  523.4 & 1047 &  150.1 &      1h &    \bf5.5	 \\

Ex-2(64-256-256) &  0.1 		 & & 		 & 3.39-1 & 2.13-3 & 4.47-2 & 9.99-4 & 4.56-1 & 8.25-4 		 & & 		 &  1038 &  1441 &  1511 &   420 &    560,345,196,854 & 439,214,31,13  & & 		 &      1h & 1h &      1h & 982.8 &      1h &    \bf24.4	 \\
Ex-3(32-128-128) &    0 		 & & 		 & 1.58-2 & 1.19-3 & 1.23-3 & 1.13-3 & 1.62-1 & 8.18-4 		 & & 		 &  7486 & 10759 & 11140 & 10456 &  5001,5001,3469 &  189,69,19    & & 		 &      1h &      1h &      1h &      1h &      1h &     \bf4.9	 \\
Ex-3(64-256-256) &    0 		 & & 		 & 2.41-1 & 3.71-3 & 3.45-2 & 9.99-4 &       - & 4.86-4 		 & & 		 &  1041 & 1442  &  1513 &   138 &      - & 289,99,25,13  & & 		 &      1h &      1h &      1h &  323 &       - &   \bf21.5	 \\
Ex-3(32-128-128) & 0.05 		 & & 		 & 9.35-3 & 9.97-4 & 1.07-3 & 1.00-3 & 1.46-1 & 9.48-4 		 & & 		 &  7542 &  1000 & 11160 &   442 &  5001,5001,7774 &  159,59,16    & & 		 &      1h & 332.1 &      1h &  148.4 &      1h &     \bf4.1	 \\
Ex-3(64-256-256) & 0.05 		 & & 		 & 2.33-1 & 9.98-4 & 3.35-2 & 9.97-4 &       - & 4.76-4 		 & & 		 &  1041 &  1258 &  1515 &   113 &      - & 214,89,25,13  & & 		 &      1h & 3136.3 &      1h &  263.4 &       - &    \bf21.1	 \\

Ex-3(32-128-128) &  0.1 		 & & 		 & 6.48-3 & 9.94-4 & 1.00-3 & 9.99-4 & 1.36-1 & 7.74-4 		 & & 		 &  7483 &   792 &  4451 &   258 & 216,103,14463 &  159,49,16      & & 		 &      1h &  261.5 & 1431.7 &   88.3 &      1h &     \bf4.0	 \\
Ex-3(64-256-256) &  0.1 		 & & 		 & 2.26-1 & 9.95-4 & 3.26-2 & 9.97-4 & 1.35-1 & 4.84-4 		 & & 		 &  1040 &   806 &  1516 &    93 &  215,102,3,2275 & 214,69,25,13  & & 		 &      1h & 2005 &      1h &  216.8 &      1h &    \bf20.7	 \\

Ex-4(32-128-128) &    0 		 & & 		 & 1.13-2 & 2.66-3 & 2.66-3 & 2.64-3 & 2.29-1 & 9.39-4 		 & & 		 &  7508 & 10763 & 11133 & 10450 &  5001,5001,3469 &  264,59,16    & & 		 &      1h &      1h &      1h &      1h &      1h &     \bf4.4	 \\

Ex-4(64-256-256) &    0 		 & & 		 & 3.02-1 & 4.46-3 & 2.98-2 & 1.00-3 &       - & 5.94-4 		 & & 		 &  1037 &  1446 &  1510 &   323 &      - & 364,99,25,13  & & 		 &      1h &      1h &      1h &  769.4 &      - &    \bf22.0	 \\
Ex-4(32-128-128) & 0.05 		 & & 		 & 7.35-3 & 2.38-3 & 2.38-3 & 2.37-3 & 2.06-1 & 9.04-4 		 & & 		 &  7519 & 10772 &  11139 & 10493 &  5001,5001,7785 & 214,59,16    & & 		 &      1h &      1h &      1h &      1h &      1h &     \bf4.4	 \\
Ex-4(64-256-256) & 0.05 		 & & 		 & 2.92-1 & 1.46-3 & 2.98-2 & 1.00-3 &       - & 5.89-4 		 & & 		 &  1038 &  1442 &  1509 &   254 &      - & 264,89,25,13  & & 		 &      1h &      1h &      1h &  598.9 &       - &    \bf21.3	 \\
Ex-4(32-128-128) &  0.1 		 & & 		 & 4.50-3 & 2.15-3 & 2.15-3 & 2.14-3 & 1.92-1 & 7.75-4 		 & & 		 &  7519 & 10772 & 11143 & 10434 & 318,132,14806 & 189,49,16    & & 		 &      1h &      1h &      1h &      1h &      1h &     \bf4.2	 \\
Ex-4(64-256-256) &  0.1 		 & & 		 & 2.82-1 & 1.00-3 & 2.97-2 & 9.98-4 & 1.92-1 & 5.42-4 		 & & 		 &  1039 & 1200  &  1509 &   220 &  317,131,86,2213 & 264,69,25,13  & & 		 &      1h & 2993.2 &      1h &  516.5 &      1h &    \bf21.0	 \\

Ex-5(32-128-128) &    0 		 & & 		 & 1.00-2 & 4.52-3 & 4.38-3 & 4.46-3 &       - & 6.33-4 		 & & 		 &  7503 & 10734 & 11127 & 10448 &      - &  264,69,31    & & 		 &      1h &      1h &      1h &      1h &       - &    \bf6.4	 \\
Ex-5(64-256-256) &    0 		 & & 		 & 3.97-1 & 3.87-3 & 1.71-2 & 2.07-3 &       - & 7.81-4 		 & & 		 &  1038 &  1440 &  1515 &  1521 &      - & 389,129,37,16 & & 		 &      1h &      1h &      1h &      1h &       - &    \bf27.0	 \\
Ex-5(32-128-128) & 0.05 		 & & 		 & 1.25-2 & 3.51-3 & 3.54-3 & 3.35-3 &       - & 9.34-4 		 & & 		 &  7510 & 10764 & 11193 & 10457 &      - & 314,79,25    & & 		 &      1h &      1h &      1h &      1h &       - &    \bf6.0	 \\
Ex-5(64-256-256) & 0.05 		 & & 		 & 3.73-1 & 8.19-3 & 1.65-2 & 1.55-3 &       - & 8.92-4 		 & & 		 &  1039 &  1441 &  1515 &  1525 &      - & 439,144,49,16 & & 		 &      1h &      1h &      1h &      1h &       - &    \bf28.4	 \\
Ex-5(32-128-128) &  0.1 		 & & 		 & 1.41-2 & 3.04-3 & 3.07-3 & 2.87-3 & 1.42-1 & 8.91-4 		 & & 		 &  7508 & 10771 & 11129 & 10459 & 400,429,14848 & 289,69,25    & & 		 &      1h &      1h &      1h &      1h &      1h &    \bf5.7	 \\
Ex-5(64-256-256) &  0.1 		 & & 		 & 3.57-1 & 4.98-3 & 1.60-2 & 1.40-3 & 1.46-1 & 8.80-4 		 & & 		 &  1040 &  1447 &  1514 &  1524 &  405,430,248,1938 & 389,129,43,16 & & 		 &      1h &      1h &      1h &      1h &      1h &    \bf27.4	 \\
    \toprule[1pt]
\end{tabular}
	\end{table}
\end{landscape}

\begin{landscape}
	\begin{table}
		\scriptsize
		\setlength\tabcolsep{1pt}
		\centering
		\caption{Comparison of algorithms for DOT problems with tolerance $\texttt{Tol}=10^{-4}$. The algorithms are abbreviated as M (multilevel-FISTA) and I (inpALM). In the table,  ``Iter''  denotes the number of iterations for every level of the grids, ``-'' implies the algorithm diverges, and ``1h'' stands for 3600s}
		\label{tab-gauss-tol4}
		\begin{tabular}{lcccccccccccccccccccccccccccc}
\bottomrule[1pt]
\multirow{2}{*}{Problem} 
& \multirow{2}{*}{$\delta$} 
& & & 
\multicolumn{6}{c}{\(\eta_{\text{dot}}\)} & & & 
\multicolumn{6}{c}{Iter} & & & 
\multicolumn{6}{c}{Time (s)} \\
\cline{5-10} \cline{13-18} \cline{21-26}
          &            & &        & DR      & DRc      & PD      & PDc      & M & I                   & &           & DR      & DRc      & PD      & PDc      & M & I             & &           & DR      & DRc      & PD      & PDc      & M & I      \\
    \hline
Ex-1(32-128-128) &    0 		 & & 		 & 2.26-2 & 7.00-4 & 7.11-4 & 7.00-4 & 4.30-1 & 9.42-5 		 & & 		 &  7474 & 10779 & 10969 & 11624 &   5001,5001,3017 &  214,89,69 		 & & 		 &      1h &      1h &      1h &      1h &      1h & \bf 10.3  \\
Ex-1(64-256-256) &    0 		 & & 		 & 3.20-1 & 6.96-3 & 7.21-2 & 2.96-4 & 4.29-1 & 8.30-5 		 & & 		 &  1038 &  1451 &  1502 &  1559 &   5001,5001,5001,438 &  214,89,69,59 		 & & 		 &      1h &      1h &      1h &      1h &      1h & \bf 66.4  \\
Ex-1(32-128-128) & 0.05 		 & & 		 & 2.04-2 & 4.93-4 & 4.97-4 & 4.93-4 & 3.89-1 & 7.30-5 		 & & 		 &  7482 & 10795 & 10994 & 11630 &  5001,5001,7569 &  174,69,59 		 & & 		 &      1h &      1h &      1h &      1h &      1h & \bf 9.0   \\
Ex-1(64-256-256) & 0.05 		 & & 		 & 3.04-1 & 2.44-3 & 7.30-2 & 1.80-4 &       - & 9.39-5 		 & & 		 &  1038 &  1452 &  1500 &  1563 &       - &  159,69,59,43 		 & & 		 &      1h &      1h &      1h &      1h &       - & \bf 50.8  \\
Ex-1(32-128-128) &  0.1 		 & & 		 & 1.74-2 & 3.77-4 & 3.78-4 & 3.77-4 & 4.02-1 & 8.18-5 		 & & 		 &  7504 & 10793 & 11071 & 11637 &  5001,5001,5081 &  144,59,49 		 & & 		 &      1h &      1h &      1h &      1h &      1h & \bf 8.8   \\
Ex-1(64-256-256) &  0.1 		 & & 		 & 2.90-1 & 2.43-4 & 7.27-2 & 1.33-4 & 4.01-1 & 8.36-5 		 & & 		 &  1038 &  1453 &  1511 &  1563 &    5001,5001,3343,11&  144,59,49,37 		 & & 		 &      1h &      1h &      1h &      1h &      1h & \bf 44.5  \\
Ex-2(32-128-128) &    0 		 & & 		 & 1.88-2 & 9.35-4 & 9.70-4 & 9.25-4 &       - & 9.49-5 		 & & 		 &  7466 & 10570 & 11133 & 11529 &       - &  729,529,414 		 & & 		 &      1h &      1h &      1h &      1h &       - & \bf 52.5  \\
Ex-2(64-256-256) &    0 		 & & 		 & 4.02-1 & 1.13-2 & 4.29-2 & 3.65-4 &       - & 9.96-5 		 & & 		 &  1036 &  1438 &  1511 &  1539 &       - &  729,529,414,239 		 & & 		 &      1h &      1h &      1h &      1h &       - & \bf 255.4 \\
Ex-2(32-128-128) & 0.05 		 & & 		 & 1.87-2 & 8.06-4 & 8.09-4 & 8.05-4 &       - & 9.47-5 		 & & 		 &  7474 & 10562 & 11133 & 11622 &       - &  569,439,339 		 & & 		 &      1h &      1h &      1h &      1h &       - & \bf 43.1  \\
Ex-2(64-256-256) & 0.05 		 & & 		 & 3.58-1 & 6.17-3 & 4.98-2 & 3.26-4 &       - & 9.60-5 		 & & 		 &  1038 &  1430 &  1512 &  1560 &       - &  569,439,339,174 		 & & 		 &      1h &      1h &      1h &      1h &       - & \bf 192.9 \\
Ex-2(32-128-128) &  0.1 		 & & 		 & 1.92-2 & 7.99-4 & 8.00-4 & 7.95-4 & 4.58-1 & 9.27-5 		 & & 		 &  7476 & 10803 & 11026 & 11622 &   5001,5001,5092 &  489,364,264 		 & & 		 &      1h &      1h &      1h &      1h &      1h & \bf 35.2  \\
Ex-2(64-256-256) &  0.1 		 & & 		 & 3.39-1 & 2.13-3 & 4.47-2 & 2.95-4 & 4.56-1 & 9.35-5 		 & & 		 &  1038 &  1441 &  1511 &  1561 &   5001,5001,3130,21  &  489,364,264,144 		 & & 		 &      1h &      1h &      1h &      1h &      1h & \bf 161.2 \\
Ex-3(32-128-128) &    0 		 & & 		 & 1.58-2 & 1.19-3 & 1.23-3 & 1.13-3 & 1.62-1 & 9.63-5 		 & & 		 &  7486 & 10759 & 11140 & 10622 &  5001,5001,3469 &  314,174,89 		 & & 		 &      1h &      1h &      1h &      1h &      1h & \bf 14.0  \\
Ex-3(64-256-256) &    0 		 & & 		 & 2.41-1 & 3.71-3 & 3.45-2 & 5.04-4 &       -  & 8.60-5 		 & & 		 &  1041 &  1442 &  1515 &  1538 &       - &  314,174,89,59 		 & & 		 &      1h &      1h &      1h &      1h &       - & \bf 69.2  \\
Ex-3(32-128-128) & 0.05 		 & & 		 & 9.35-3 & 1.05-3 & 1.07-3 & 9.43-4 & 1.46-1 & 9.47-5 		 & & 		 &  7542 & 10777 & 11160 & 11091 & 5001,5001,7774 &  239,129,69 		 & & 		 &      1h &      1h &      1h &      1h &      1h & \bf 11.1  \\
Ex-3(64-256-256) & 0.05 		 & & 		 & 2.33-1 & 6.27-4 & 3.35-2 & 3.64-4 &       - & 9.61-5 		 & & 		 &  1041 &  1446 &  1515 &  1539 &       - &  239,129,69,37 		 & & 		 &      1h &      1h &      1h &      1h &       - & \bf 47.8  \\
Ex-3(32-128-128) &  0.1 		 & & 		 & 6.48-3 & 9.12-4 & 9.28-4 & 8.45-4 & 1.36-1 & 9.27-5 		 & & 		 &  7483 & 10787 & 11167 & 11634 & 867,730,6980 &  214,114,59 		 & & 		 &      1h &      1h &      1h &      1h &      1h & \bf 9.7   \\
Ex-3(64-256-256) &  0.1 		 & & 		 & 2.26-1 & 2.86-4 & 3.26-2 & 2.64-4 & 1.35-1 & 9.12-5 		 & & 		 &  1040 &  1446 &  1516 &  1563 &  854,732,433,408 &  214,114,59,31 		 & & 		 &      1h &      1h &      1h &      1h &      1h & \bf 41.1  \\
Ex-4(32-128-128) &    0 		 & & 		 & 1.13-2 & 2.66-3 & 2.66-3 & 2.64-3 & 2.29-1 & 9.98-5 		 & & 		 &  7508 & 10763 & 11229 & 10450 &   5001,5001,3469 &  414,239,144 		 & & 		 &      1h &      1h &      1h &      1h &      1h & \bf 20.6  \\
Ex-4(64-256-256) &    0 		 & & 		 & 3.02-1 & 4.46-3 & 2.98-2 & 1.21-3 &       - & 9.89-5 		 & & 		 &  1037 &  1446 &  1510 &  1517 &       - &  439,239,144,79 		 & & 		 &      1h &      1h &      1h &      1h &       - & \bf 92.8  \\
Ex-4(32-128-128) & 0.05 		 & & 		 & 7.35-3 & 2.38-3 & 2.38-3 & 2.37-3 & 2.06-1 & 9.99-5 		 & & 		 &  7519 & 10772 & 11133 & 10470 &  5001,5001,7785 &  314,189,114 		 & & 		 &      1h &      1h &      1h &      1h &      1h & \bf 16.9  \\
Ex-4(64-256-256) & 0.05 		 & & 		 & 2.92-1 & 1.46-3 & 2.98-2 & 1.04-3 &       - & 9.27-5 		 & & 		 &  1038 &  1442 &  1510 &  1529 &       - &  314,189,114,69 		 & & 		 &      1h &      1h &      1h &      1h &       - & \bf 79.9  \\
Ex-4(32-128-128) &  0.1 		 & & 		 & 4.50-3 & 2.15-3 & 2.15-3 & 2.14-3 & 1.92-1 & 9.40-5 		 & & 		 &  7519 & 10772 & 11143 & 10595 & 1270,931,6586 &  289,159,99 		 & & 		 &      1h &      1h &      1h &      1h &      1h & \bf 14.5  \\
Ex-4(64-256-256) &  0.1 		 & & 		 & 2.82-1 & 9.29-4 & 2.97-2 & 9.17-4 & 1.92-1 & 9.76-5 		 & & 		 &  1039 &  1444 &  1509 &  1532 &  1269,924,668,306 &  289,159,99,49 		 & & 		 &      1h &      1h &      1h &      1h &      1h & \bf 61.2  \\
Ex-5(32-128-128) &    0 		 & & 		 & 1.00-2 & 4.52-3 & 4.38-3 & 4.46-3 &       - & 9.86-5 		 & & 		 &  7503 & 10734 & 11127 & 10448 &       - &  464,339,264 		 & & 		 &      1h &      1h &      1h &      1h &       - & \bf 34.1  \\
Ex-5(64-256-256) &    0 		 & & 		 & 3.97-1 & 3.87-3 & 1.71-2 & 2.07-3 &       - & 9.66-5 		 & & 		 &  1038 &  1440 &  1515 &  1521 &       - &  464,339,264,174 		 & & 		 &      1h &      1h &      1h &      1h &       - & \bf 184.2 \\
Ex-5(32-128-128) & 0.05 		 & & 		 & 1.25-2 & 3.51-3 & 3.54-3 & 3.35-3 &       - & 9.72-5 		 & & 		 &  7510 & 10764 & 11193 & 10457 &       - &  489,389,314 		 & & 		 &      1h &      1h &      1h &      1h &       - & \bf 40.8  \\
Ex-5(64-256-256) & 0.05 		 & & 		 & 3.73-1 & 8.19-3 & 1.65-2 & 1.55-3 &       - & 9.81-5 		 & & 		 &  1039 &  1441 &  1515 &  1525 &       - &  489,389,314,189 		 & & 		 &      1h &      1h &      1h &      1h &       - & \bf 202.0 \\
Ex-5(32-128-128) &  0.1 		 & & 		 & 1.41-2 & 3.04-3 & 3.07-3 & 2.87-3 & 1.42-1 & 9.45-5 		 & & 		 &  7508 & 10771 & 11229 & 10459 & 1671,2163,4665 &  464,339,289 		 & & 		 &      1h &      1h &      1h &      1h &      1h & \bf 36.4  \\
Ex-5(64-256-256) &  0.1 		 & & 		 & 3.57-1 & 4.98-3 & 1.60-2 & 1.40-3 & 1.46-1 & 9.74-5 		 & & 		 &  1040 &  1447 &  1514 &  1524 &  1642,2158,2321,99 &  439,339,264,174 		 & & 		 &      1h &      1h &      1h &      1h &      1h & \bf 185.2 \\
\toprule[1pt]
\end{tabular}
		
	\end{table}
\end{landscape}

\begin{landscape}
	\begin{table}
		\scriptsize
		\setlength\tabcolsep{1pt}
		\centering
		\caption{Comparison of algorithms (without multilevel strategy) for DOT problems with tolerance $\texttt{Tol}=10^{-4}$. The algorithms are abbreviated as P (pALM),  G (ALG2), $\text{A}_2$ (acc-pADMM with $\theta=2$), $\text{A}_{15}$ (acc-pADMM with $\theta=15$), $\text{S}_2$ (acc-sGS-pADMM with $\theta=2$), and I (inpALM). In the table,  ``Iter''  denotes the number of iterations, and ``1h'' stands for 3600s}
		\label{tab-admm}
		\begin{tabular}{lcccccccccccccccccccccccccccc}
\bottomrule[1pt]
\multirow{2}{*}{Problem} & \multirow{2}{*}{$\delta$} & & & \multicolumn{6}{c}{\(\eta_{\text{dot}}\)} & & & \multicolumn{6}{c}{Iter} & & & \multicolumn{6}{c}{Time (s)} \\
\cline{5-10} \cline{13-18} \cline{21-26}
& & & & P & G & ${A}_{2}$ & ${A}_{15}$ & $S_{2}$ & I & & & P & G & ${A}_{2}$ & ${A}_{15}$ & $S_{2}$ & I & & & P & G & ${A}_{2}$ & ${A}_{15}$ & $S_{2}$ & I \\
\hline
Ex-1(32-128-128) & 0    & & & 8.61-5 & 9.99-5 & 8.45-5 & 9.05-5 & 9.71-5 & 9.47-5 & & & 264 & 264 & 339 & 314 & 7202 & 159 & & & 32.9 & 29.4 & 49.0 & 53.6 & 607.7 & \bf 18.4 \\
Ex-1(64-256-256) & 0    & & & 9.37-5 & 9.95-5 & 9.16-5 & 8.60-5 & 2.24-3 & 8.81-5 & & & 214 & 214 & 339 & 314 & 5374 & 144 & & & 210.9 & 186.6 & 383.9 & 426.9 & 1h & \bf 128.3 \\
Ex-1(32-128-128) & 0.05 & & & 8.26-5 & 7.99-5 & 1.06-4 & 1.00-4 & 6.73-5 & 8.97-5 & & & 189 & 214 & 214 & 214 & 5459 & 129 & & & 24.1 & 24.3 & 32.2 & 37.4 & 460.7 & \bf 15.2 \\
Ex-1(64-256-256) & 0.05 & & & 8.14-5 & 8.89-5 & 7.53-5 & 7.34-5 & 2.52-3 & 9.37-5 & & & 174 & 174 & 239 & 239 & 5415 & 114 & & & 173.1 & 153.3 & 275.3 & 319.8 & 1h & \bf 103.2 \\
Ex-1(32-128-128) & 0.1  & & & 8.61-5 & 9.03-5 & 1.03-4 & 1.00-4 & 7.81-5 & 8.14-5 & & & 159 & 174 & 189 & 189 & 4463 & 114 & & & 20.6 & 19.9 & 29.1 & 33.2 & 376.6 & \bf 13.7 \\
Ex-1(64-256-256) & 0.1  & & & 8.83-5 & 7.16-5 & 8.87-5 & 9.39-5 & 2.79-3 & 9.51-5 & & & 144 & 159 & 214 & 189 & 5423 & 99  & & & 144.6 & 140.7 & 249.1 & 256.2 & 1h & \bf 91.2 \\
Ex-2(32-128-128) & 0    & & & 9.59-5 & 9.88-5 & 9.89-5 & 9.63-5 & 9.81-5 & 9.37-5 & & & 1049 & 1369 & 1209 & 1289 & 14423 & 929 & & & 124.5 & 144.1 & 173.9 & 213.3 & 1236.4 & \bf 97.6 \\
Ex-2(64-256-256) & 0    & & & 9.93-5 & 9.62-5 & 9.70-5 & 9.75-5 & 4.04-4 & 9.34-5 & & & 1009 & 1289 & 1089 & 1209 & 5333 & 929 & & & 949.3 & 1061.6 & 1246.9 & 1547.9 & 1h & \bf 769.8 \\
Ex-2(32-128-128) & 0.05 & & & 9.94-5 & 9.68-5 & 9.58-5 & 9.53-5 & 8.15-5 & 9.73-5 & & & 729 & 969 & 889 & 929 & 16415 & 689 & & & 87.0 & 101.5 & 123.6 & 152.0 & 1400.2 & \bf 73.0 \\
Ex-2(64-256-256) & 0.05 & & & 9.45-5 & 9.97-5 & 9.95-5 & 9.64-5 & 7.31-4 & 8.89-5 & & & 729 & 889 & 809 & 849 & 5347 & 689 & & & 688.3 & 734.5 & 893.1 & 1090.5 & 1h & \bf 572.0 \\
Ex-2(32-128-128) & 0.1  & & & 8.65-5 & 9.88-5 & 9.73-5 & 9.89-5 & 9.22-5 & 8.61-5 & & & 649 & 809 & 729 & 769 & 12680 & 609 & & & 77.7 & 84.8 & 102.3 & 127.0 & 1081.4 & \bf 64.7 \\
Ex-2(64-256-256) & 0.1  & & & 8.73-5 & 9.10-5 & 9.99-5 & 8.96-5 & 9.46-4 & 8.96-5 & & & 609 & 769 & 649 & 729 & 5355 & 569 & & & 577.7 & 636.9 & 713.9 & 941.1 & 1h & \bf 475.7 \\
Ex-3(32-128-128) & 0    & & & 9.64-5 & 9.11-5 & 9.58-5 & 9.57-5 & 9.47-5 & 9.86-5 & & & 339 & 464 & 364 & 414 & 5210 & 314 & & & 41.8 & 50.2 & 53.2 & 69.4 & 447.8 & \bf 34.8 \\
Ex-3(64-256-256) & 0    & & & 9.08-5 & 9.92-5 & 9.89-5 & 9.64-5 & 2.31-3 & 9.46-5 & & & 314 & 364 & 339 & 364 & 5340 & 264 & & & 304.4 & 309.4 & 386.3 & 477.1 & 1h & \bf 227.5 \\
Ex-3(32-128-128) & 0.05 & & & 8.90-5 & 9.01-5 & 9.77-5 & 9.33-5 & 6.41-5 & 9.56-5 & & & 264 & 339 & 239 & 314 & 4712 & 239 & & & 33.1 & 37.1 & 38.2 & 53.1 & 403.4 & \bf 26.8 \\
Ex-3(64-256-256) & 0.05 & & & 8.82-5 & 8.09-5 & 9.12-5 & 8.36-5 & 2.47-3 & 8.36-5 & & & 239 & 289 & 264 & 289 & 5358 & 214 & & & 234.2 & 247.6 & 308.3 & 381.5 & 1h & \bf 186.1 \\
Ex-3(32-128-128) & 0.1  & & & 7.49-5 & 9.62-5 & 7.88-5 & 7.99-5 & 8.55-5 & 9.25-5 & & & 239 & 289 & 239 & 289 & 3716 & 214 & & & 30.1 & 31.7 & 38.0 & 49.0 & 318.7 & \bf 24.3 \\
Ex-3(64-256-256) & 0.1  & & & 7.61-5 & 8.59-5 & 8.53-5 & 7.83-5 & 2.58-3 & 8.54-5 & & & 214 & 264 & 239 & 264 & 5361 & 189 & & & 210.8 & 227.2 & 290.7 & 350.0 & 1h & \bf 165.7 \\
Ex-4(32-128-128) & 0    & & & 9.35-5 & 9.21-5 & 9.88-5 & 9.93-5 & 9.68-5 & 9.42-5 & & & 489 & 649 & 529 & 609 & 7202 & 439 & & & 59.6 & 69.2 & 76.5 & 100.9 & 629.6 & \bf 47.4 \\
Ex-4(64-256-256) & 0    & & & 9.46-5 & 9.26-5 & 9.53-5 & 8.79-5 & 2.03-3 & 9.96-5 & & & 439 & 569 & 414 & 529 & 5232 & 389 & & & 421.3 & 477.1 & 469.2 & 686.9 & 1h & \bf 330.1 \\
Ex-4(32-128-128) & 0.05 & & & 9.98-5 & 9.37-5 & 8.98-5 & 9.07-5 & 9.78-5 & 8.86-5 & & & 339 & 464 & 339 & 439 & 6455 & 339 & & & 41.8 & 50.1 & 50.4 & 73.4 & 578.5 & \bf 37.0 \\
Ex-4(64-256-256) & 0.05 & & & 9.57-5 & 9.91-5 & 8.21-5 & 9.61-5 & 2.31-3 & 9.29-5 & & & 339 & 414 & 314 & 389 & 5100 & 314 & & & 327.3 & 350.4 & 361.3 & 508.6 & 1h & \bf 267.9 \\
Ex-4(32-128-128) & 0.1  & & & 8.64-5 & 8.87-5 & 8.52-5 & 9.03-5 & 9.01-5 & 9.52-5 & & & 314 & 414 & 314 & 389 & 6206 & 289 & & & 38.9 & 44.9 & 46.4 & 65.3 & 531.7 & \bf 32.0 \\
Ex-4(64-256-256) & 0.1  & & & 9.87-5 & 8.90-5 & 9.60-5 & 9.45-5 & 2.34-3 & 8.28-5 & & & 289 & 389 & 264 & 339 & 5348 & 289 & & & 280.4 & 328.8 & 305.3 & 445.1 & 1h & \bf 247.4 \\
Ex-5(32-128-128) & 0    & & & 9.82-5 & 9.82-5 & 9.81-5 & 9.90-5 & 1.32-4 & 9.96-5 & & & 529 & 729 & 649 & 689 & 6206 & 529 & & & 64.3 & 77.1 & 99.4 & 113.7 & 545.3 & \bf 57.1 \\
Ex-5(64-256-256) & 0    & & & 9.28-5 & 9.62-5 & 9.25-5 & 9.97-5 & 1.59-3 & 9.47-5 & & & 569 & 769 & 609 & 689 & 5189 & 569 & & & 541.6 & 638.1 & 699.3 & 887.7 & 1h & \bf 476.1 \\
Ex-5(32-128-128) & 0.05 & & & 9.71-5 & 9.68-5 & 9.75-5 & 9.84-5 & 9.71-5 & 9.47-5 & & & 689 & 889 & 729 & 809 & 12680 & 649 & & & 82.4 & 94.0 & 104.1 & 132.5 & 1109.8 & \bf 68.8 \\
Ex-5(64-256-256) & 0.05 & & & 9.32-5 & 9.87-5 & 9.61-5 & 9.50-5 & 1.92-3 & 9.72-5 & & & 729 & 889 & 689 & 809 & 5272 & 649 & & & 688.4 & 733.8 & 774.3 & 1037.6 & 1h & \bf 540.0 \\
Ex-5(32-128-128) & 0.1  & & & 9.97-5 & 9.93-5 & 9.87-5 & 9.40-5 & 9.22-5 & 9.18-5 & & & 609 & 809 & 649 & 769 & 10937 & 609 & & & 73.5 & 85.4 & 93.4 & 126.0 & 976.3 & \bf 65.3 \\
Ex-5(64-256-256) & 0.1  & & & 9.26-5 & 9.87-5 & 9.98-5 & 9.58-5 & 2.33-3 & 9.36-5 & & & 649 & 809 & 569 & 729 & 4995 & 609 & & & 614.6 & 668.8 & 637.4 & 937.1 & 1h & \bf 508.6 \\
\toprule[1pt]
\end{tabular}
	\end{table}
\end{landscape}

\begin{landscape}
	\begin{table}
		\scriptsize
		\setlength\tabcolsep{.5pt}
		\centering
		\caption{Comparison of algorithms for DOT problems with tolerance $\texttt{Tol}=10^{-4}$ and multilevel strategy. The algorithms are abbreviated as P (pALM), S (sGS-inpALM), G (ALG2), $\text{A}_2$ (acc-pADMM with $\theta=2$), $\text{A}_{15}$ (acc-pADMM with $\theta=15$), $\text{S}_2$ (acc-sGS-pADMM with $\theta=2$), and I (inpALM). In the table,  ``Iter''  denotes the total number of iterations for all levels of grids, and ``1h'' stands for 3600s}
		\label{tab-admm-mul}
		\begin{tabular}{lcccccccccccccccccccccccccccc}
    \bottomrule[1pt]
\multirow{2}{*}{Problem} & \multirow{2}{*}{$\delta$} & & & \multicolumn{7}{c}{\(\eta_{\text{dot}}\)} & & & \multicolumn{7}{c}{Iter} & & & \multicolumn{7}{c}{Time (s)} \\
\cline{5-11} \cline{14-20} \cline{23-29}
& & & & P & S & G & ${A}_{2}$ & ${A}_{15}$ & $S_{2}$ & I & & & P & S & G & ${A}_{2}$ & ${A}_{15}$ & $S_{2}$ & I & & & P & S & G & ${A}_{2}$ & ${A}_{15}$ & $S_{2}$ & I \\
    \hline
Ex-1(32-128-128) & 0 &  &  & 9.89-5 & 9.42-5 & 9.27-5 & 8.68-5 & 7.97-5 & 6.97-5 & 9.42-5 &  &  & 447 & 4194 & 547 & 767 & 647 & 1704 & 372 &  &  & 12.6 & 212.5 & 11.9 & 40.8 & 29.3 & 115.0 & \bf 10.3  \\

Ex-1(64-256-256) & 0 &  &  & 8.18-5 & 9.89-5 & 9.53-5 & 9.79-5 & 9.37-5 & 9.73-5 & 8.30-5 &  &  & 506 & 2231 & 590 & 981 & 776 & 1490 & 431 &  &  & 76.3 & 768.4 & \bf 54.2 & 302.4 & 210.7 & 678.2 & 66.4  \\

Ex-1(32-128-128) & 0.05 &  &  & 7.54-5 & 8.99-5 & 9.59-5 & 8.03-5 & 8.36-5 & 8.13-5 & 7.30-5 &  &  & 342 & 1982 & 371 & 602 & 462 & 1733 & 302 &  &  & 11.0 & 100.1 & \bf 8.2 & 34.8 & 23.1 & 133.6 & 9.0   \\

Ex-1(64-256-256) & 0.05 &  &  & 9.14-5 & 7.49-5 & 8.49-5 & 8.88-5 & 9.50-5 & 8.65-5 & 9.39-5 &  &  & 385 & 1754 & 402 & 791 & 561 & 1507 & 330 &  &  & 58.8 & 660.8 & \bf 39.9 & 261.7 & 165.6 & 841.8 & 50.8  \\

Ex-1(32-128-128) & 0.1 &  &  & 8.14-5 & 9.66-5 & 8.27-5 & 7.08-5 & 9.52-5 & 9.68-5 & 8.18-5 &  &  & 282 & 1942 & 330 & 547 & 397 & 1693 & 252 &  &  & 9.7 & 100.2 & \bf 7.2 & 33.4 & 19.3 & 133.2 & 8.8   \\

Ex-1(64-256-256) & 0.1 &  &  & 8.44-5 & 8.30-5 & 7.47-5 & 8.66-5 & 9.68-5 & 8.25-5 & 8.36-5 &  &  & 319 & 1704 & 355 & 736 & 486 & 1457 & 289 &  &  & 50.8 & 659.4 & \bf 33.8 & 261.0 & 148.6 & 839.3 & 44.5  \\

Ex-2(32-128-128) & 0 &  &  & 9.64-5 & 6.94-5 & 9.37-5 & 9.67-5 & 9.62-5 & 9.55-5 & 9.49-5 &  &  & 1792 & 3342 & 2387 & 2022 & 2347 & 3840 & 1672 &  &  & 63.5 & 111.1 & 74.3 & 72.4 & 120.0 & 208.4 & \bf 52.5  \\

Ex-2(64-256-256) & 0 &  &  & 9.92-5 & 7.35-5 & 9.90-5 & 9.43-5 & 9.88-5 & 9.43-5 & 9.96-5 &  &  & 1991 & 3494 & 2701 & 2411 & 2761 & 3000 & 1911 &  &  & 298.5 & 711.1 & 341.6 & 519.8 & 678.9 & 723.4 & \bf 255.4 \\

Ex-2(32-128-128) & 0.05 &  &  & 9.41-5 & 8.67-5 & 9.96-5 & 9.95-5 & 9.50-5 & 9.75-5 & 9.47-5 &  &  & 1347 & 3021 & 1792 & 1392 & 1737 & 2024 & 1347 &  &  & 51.5 & 120.3 & 55.9 & 56.2 & 93.0 & 98.7 & \bf 43.1  \\

Ex-2(64-256-256) & 0.05 &  &  & 9.97-5 & 8.95-5 & 9.38-5 & 9.61-5 & 9.25-5 & 9.84-5 & 9.60-5 &  &  & 1546 & 3046 & 2031 & 1681 & 2076 & 2552 & 1521 &  &  & 210.9 & 804.3 & 260.7 & 395.5 & 549.2 & 884.6 & \bf 192.9 \\

Ex-2(32-128-128) & 0.1 &  &  & 9.24-5 & 7.69-5 & 9.38-5 & 9.53-5 & 9.61-5 & 8.34-5 & 9.27-5 &  &  & 1157 & 2866 & 1557 & 1277 & 1542 & 1869 & 1117 &  &  & 41.6 & 119.5 & 46.9 & 51.7 & 78.2 & 97.9 & \bf 35.2  \\

Ex-2(64-256-256) & 0.1 &  &  & 9.46-5 & 9.85-5 & 9.86-5 & 9.30-5 & 9.70-5 & 8.15-5 & 9.35-5 &  &  & 1261 & 2841 & 1731 & 1501 & 1831 & 2594 & 1261 &  &  & 186.1 & 797.8 & 198.8 & 362.8 & 468.7 & 1052.2 & \bf 161.2 \\

Ex-3(32-128-128) & 0 &  &  & 9.73-5 & 8.96-5 & 9.76-5 & 8.19-5 & 9.80-5 & 7.48-5 & 9.63-5 &  &  & 627 & 2287 & 857 & 942 & 847 & 1164 & 577 &  &  & 16.8 & 103.4 & 20.8 & 47.6 & 34.6 & 61.9 & \bf 14.0  \\

Ex-3(64-256-256) & 0 &  &  & 8.61-5 & 6.68-5 & 9.52-5 & 7.81-5 & 9.83-5 & 6.60-5 & 8.60-5 &  &  & 686 & 2089 & 936 & 1256 & 991 & 1422 & 636 &  &  & 80.7 & 674.2 & 94.1 & 395.0 & 237.2 & 555.5 & \bf 69.2  \\

Ex-3(32-128-128) & 0.05 &  &  & 9.93-5 & 6.47-5 & 7.91-5 & 6.49-5 & 8.18-5 & 9.05-5 & 9.47-5 &  &  & 437 & 2615 & 642 & 717 & 587 & 994 & 437 &  &  & 13.2 & 129.1 & 17.8 & 42.7 & 27.9 & 58.9 & \bf 11.1  \\

Ex-3(64-256-256) & 0.05 &  &  & 9.69-5 & 9.09-5 & 8.20-5 & 7.30-5 & 8.43-5 & 4.45-5 & 9.61-5 &  &  & 474 & 1899 & 696 & 981 & 701 & 1652 & 474 &  &  & 55.0 & 667.9 & 81.0 & 354.6 & 189.8 & 828.3 & \bf 47.8  \\

Ex-3(32-128-128) & 0.1 &  &  & 9.58-5 & 9.87-5 & 8.78-5 & 6.58-5 & 9.20-5 & 8.38-5 & 9.27-5 &  &  & 387 & 1828 & 562 & 662 & 532 & 954 & 387 &  &  & 11.6 & 87.6 & 14.5 & 38.0 & 22.4 & 59.0 & \bf 9.7   \\

Ex-3(64-256-256) & 0.1 &  &  & 9.31-5 & 9.82-5 & 8.02-5 & 7.57-5 & 7.89-5 & 4.89-5 & 9.12-5 &  &  & 418 & 1849 & 621 & 901 & 631 & 1602 & 418 &  &  & 47.3 & 661.7 & 70.9 & 320.6 & 164.8 & 828.4 & \bf 41.1  \\

Ex-4(32-128-128) & 0 &  &  & 9.99-5 & 7.09-5 & 9.39-5 & 8.32-5 & 8.78-5 & 9.00-5 & 9.98-5 &  &  & 847 & 2243 & 1137 & 1072 & 1112 & 1369 & 797 &  &  & 24.9 & 90.9 & 31.0 & 49.0 & 49.1 & 62.2 & \bf 20.6  \\

Ex-4(64-256-256) & 0 &  &  & 9.47-5 & 8.80-5 & 9.90-5 & 8.85-5 & 8.89-5 & 8.24-5 & 9.89-5 &  &  & 936 & 2364 & 1251 & 1271 & 1286 & 2117 & 901 &  &  & 118.0 & 679.2 & 133.9 & 390.3 & 292.2 & 851.9 & \bf 92.8  \\

Ex-4(32-128-128) & 0.05 &  &  & 8.97-5 & 9.49-5 & 9.77-5 & 6.63-5 & 9.31-5 & 9.80-5 & 9.99-5 &  &  & 632 & 2501 & 837 & 792 & 772 & 1629 & 617 &  &  & 21.8 & 116.5 & 23.9 & 42.6 & 34.8 & 102.3 & \bf 16.9  \\

Ex-4(64-256-256) & 0.05 &  &  & 9.50-5 & 9.95-5 & 9.77-5 & 9.98-5 & 9.99-5 & 8.82-5 & 9.27-5 &  &  & 691 & 2588 & 926 & 916 & 901 & 2341 & 686 &  &  & 85.2 & 889.4 & 105.5 & 355.4 & 217.7 & 1171.9 & \bf 79.9  \\

Ex-4(32-128-128) & 0.1 &  &  & 9.52-5 & 5.15-5 & 9.84-5 & 8.86-5 & 8.85-5 & 9.32-5 & 9.40-5 &  &  & 547 & 2695 & 732 & 687 & 707 & 1574 & 547 &  &  & 17.6 & 129.6 & 19.8 & 36.0 & 31.4 & 101.5 & \bf 14.5  \\

Ex-4(64-256-256) & 0.1 &  &  & 9.79-5 & 7.38-5 & 9.60-5 & 7.94-5 & 9.46-5 & 9.10-5 & 9.76-5 &  &  & 596 & 2997 & 821 & 806 & 806 & 2503 & 596 &  &  & 71.4 & 1104.0 & 101.5 & 318.8 & 191.7 & 1324.4 & \bf 61.2  \\

Ex-5(32-128-128) & 0 &  &  & 9.54-5 & 9.99-5 & 9.72-5 & 8.98-5 & 9.69-5 & 1.01-4 & 9.86-5 &  &  & 1067 & 2726 & 1477 & 1227 & 1502 & 1479 & 1067 &  &  & 40.7 & 121.0 & 48.6 & 50.3 & 82.6 & 78.4 & \bf 34.1  \\

Ex-5(64-256-256) & 0 &  &  & 9.64-5 & 8.63-5 & 9.58-5 & 9.39-5 & 9.76-5 & 6.61-5 & 9.66-5 &  &  & 1226 & 1935 & 1716 & 1741 & 1741 & 1762 & 1241 &  &  & 200.0 & 465.8 & 254.4 & 385.2 & 411.2 & 623.8 & \bf 184.2 \\

Ex-5(32-128-128) & 0.05 &  &  & 9.73-5 & 5.89-5 & 9.95-5 & 8.04-5 & 9.59-5 & 8.10-5 & 9.72-5 &  &  & 1232 & 3180 & 1632 & 1137 & 1567 & 1684 & 1192 &  &  & 47.6 & 134.9 & 54.4 & 48.9 & 83.6 & 80.8 & \bf 40.8  \\

Ex-5(64-256-256) & 0.05 &  &  & 9.80-5 & 5.93-5 & 9.78-5 & 9.64-5 & 9.64-5 & 7.24-5 & 9.81-5 &  &  & 1421 & 3203 & 1896 & 1766 & 1781 & 2709 & 1381 &  &  & 235.6 & 923.2 & 280.1 & 386.8 & 407.7 & 1065.6 & \bf 202.0 \\

Ex-5(32-128-128) & 0.1 &  &  & 9.89-5 & 6.00-5 & 9.57-5 & 7.42-5 & 9.54-5 & 7.33-5 & 9.45-5 &  &  & 1092 & 3075 & 1502 & 1047 & 1437 & 1579 & 1092 &  &  & 41.1 & 133.9 & 50.7 & 45.2 & 78.1 & 80.5 & \bf 36.4  \\

Ex-5(64-256-256) & 0.1 &  &  & 9.73-5 & 5.17-5 & 9.65-5 & 7.24-5 & 9.64-5 & 6.89-5 & 9.74-5 &  &  & 1241 & 3048 & 1716 & 1601 & 1626 & 2554 & 1216 &  &  & 213.7 & 907.3 & 253.3 & 383.2 & 369.1 & 1061.1 & \bf 185.2 \\
    \bottomrule[1pt]
\end{tabular}
	\end{table}
\end{landscape}

\subsubsection{Test of scalability}
Static discrete optimal transport algorithms are well-studied in optimization, with \cite{hd24,fast-comp,yl24,hot} representing the current state-of-the-art. 
In particular, the highly efficient software package in \cite{hot} was specialized for the $L^2$ OT problem. 
By leveraging an equivalent reduced model instead of the conventional linear programming discretization, it incorporates a Halpern acceleration scheme characterized by favorable low computational complexity. 
Consequently, it achieves superior execution speed and a significant memory advantage over many contemporary algorithms. For instance, it successfully computes the distance between two $512 \times 512$ gray images from the DOTmark dataset (e.g., \Cref{fig:dotmark}), a task where the well-known Sinkhorn method and the interior point method in Gurobi typically fail due to memory exhaustion, according to \cite[Table II]{hot}. 
Nevertheless, at a $1024 \times 1024$ resolution, this software also encounters memory overflow using our workstation with 64GB of RAM, as it theoretically necessitates approximately 144GB.
Motivated by these observations, we investigate the scalability of the proposed SOCP approach by addressing the $L^2$ OT problem from the DOT model at the challenging $1024 \times 1024$ scale. 
To this end, we test the following problem.

\begin{example}[Ex-DOTmark]
	\label{exdotmark}
	Select eight images and eight shapes ($512 \times 512$ resolution) from the DOTmark dataset \cite{dotmark}, as depicted in \Cref{fig:dotmark}. For each set, we randomly partition the images into source and target groups of equal size. Subsequently, these are synthesized into two $1024 \times 1024$ high-resolution images for our computational tests.
\end{example}

\begin{figure}[ht]
	\centering
	\includegraphics[width = 0.11\textwidth]{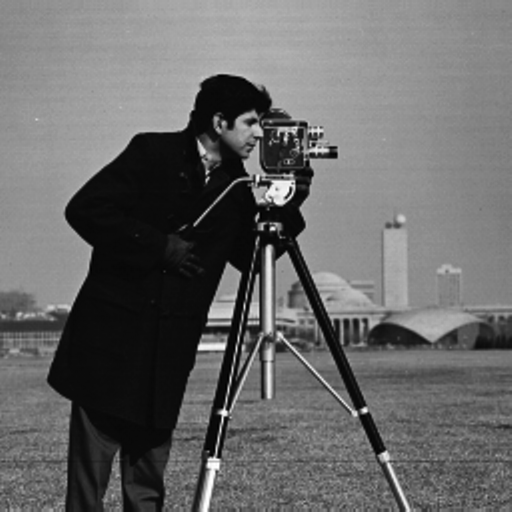}
	\includegraphics[width = 0.11\textwidth]{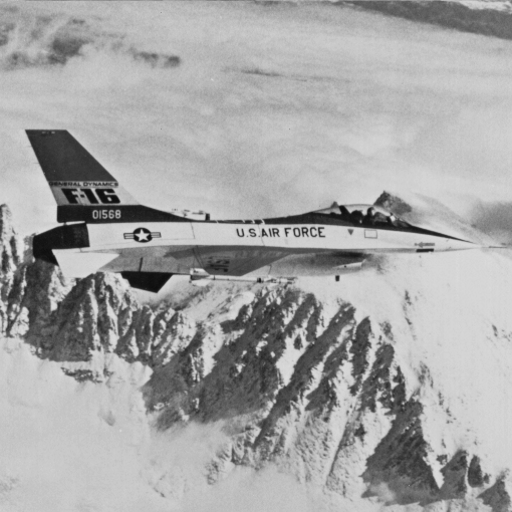}
	\includegraphics[width = 0.11\textwidth]{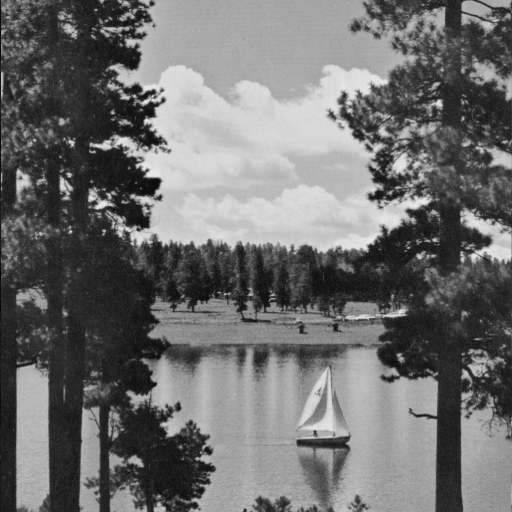}
	\includegraphics[width = 0.11\textwidth]{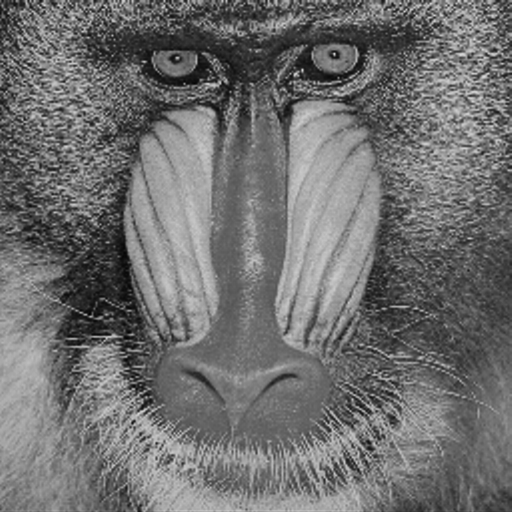}
	\includegraphics[width = 0.11\textwidth]{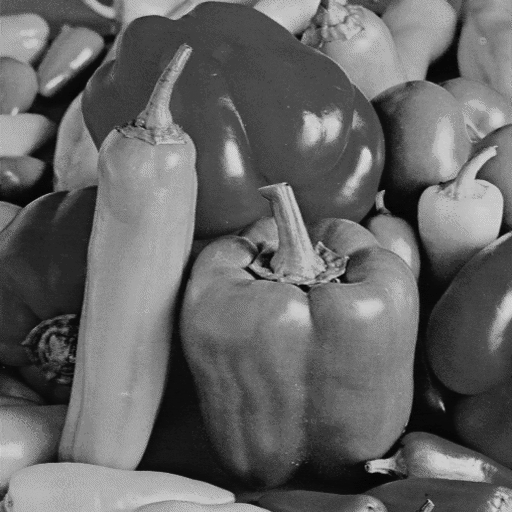}
	\includegraphics[width = 0.11\textwidth]{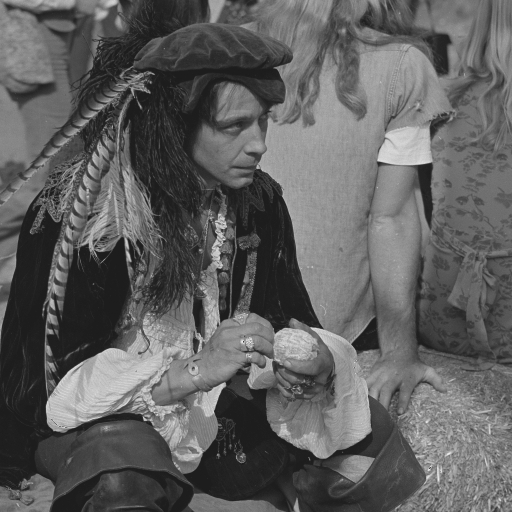}
	\includegraphics[width = 0.11\textwidth]{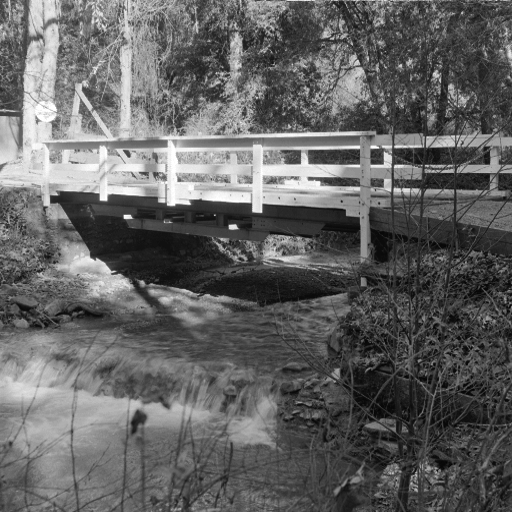}
	\includegraphics[width = 0.11\textwidth]{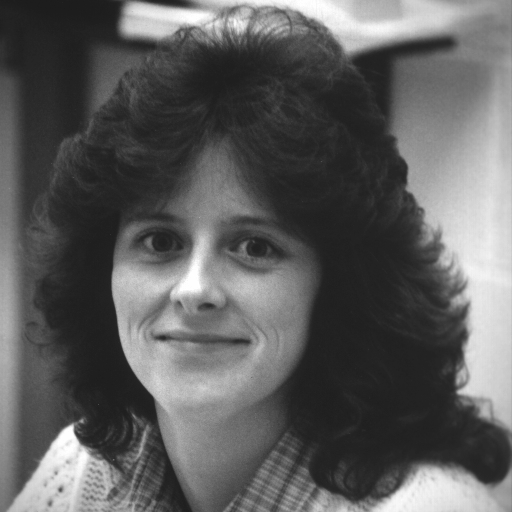}
	\\
	\vspace{2pt}
	\includegraphics[width = 0.11\textwidth]{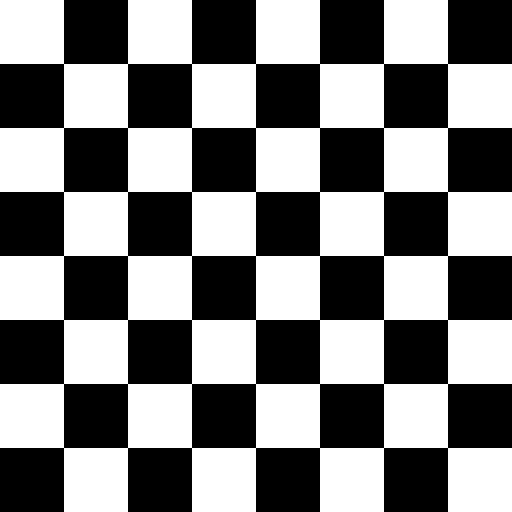}
	\includegraphics[width = 0.11\textwidth]{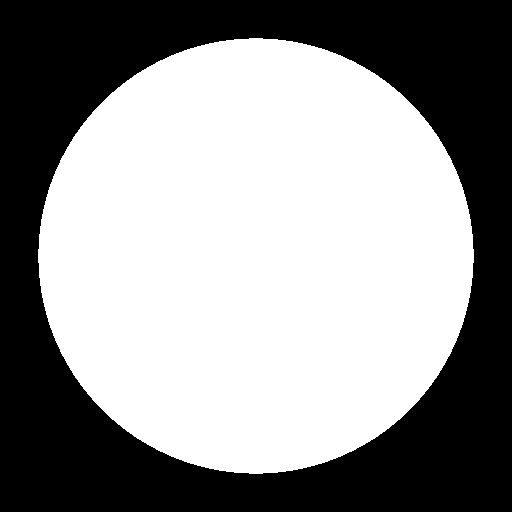}
	\includegraphics[width = 0.11\textwidth]{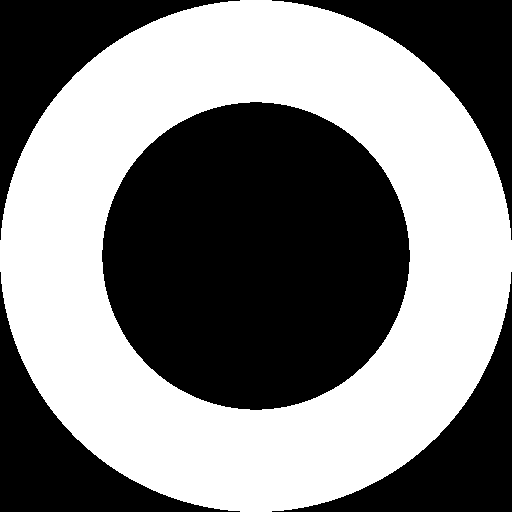}
	\includegraphics[width = 0.11\textwidth]{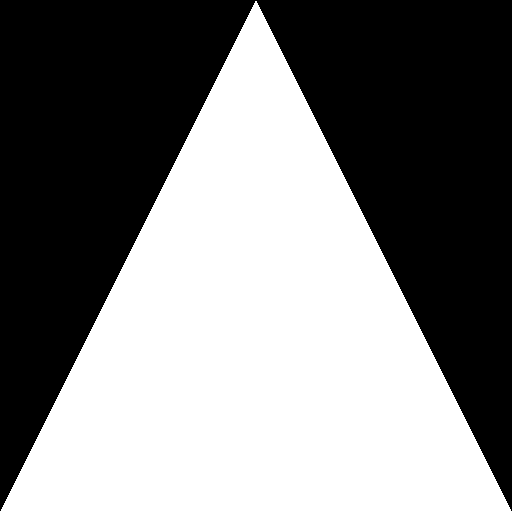}
	\includegraphics[width = 0.11\textwidth]{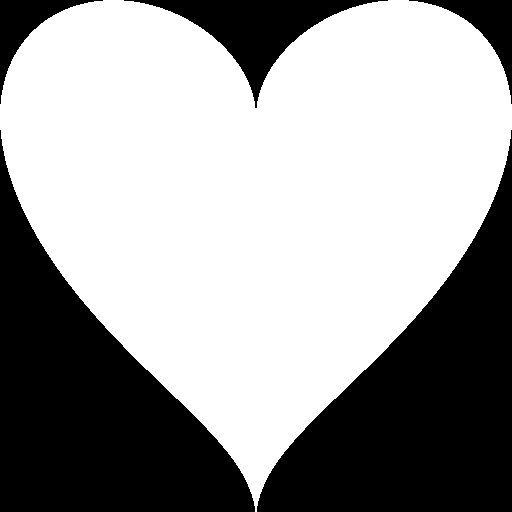}
	\includegraphics[width = 0.11\textwidth]{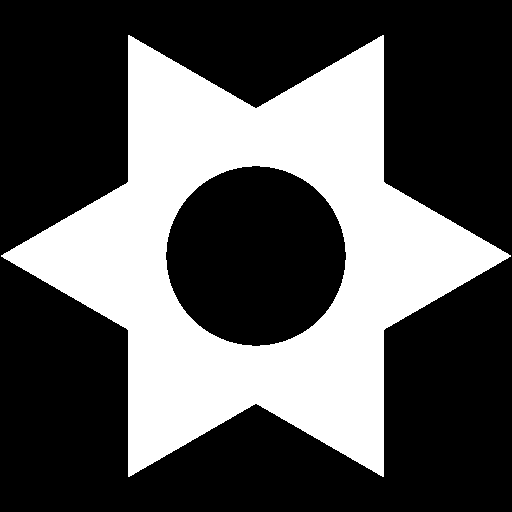}
	\includegraphics[width = 0.11\textwidth]{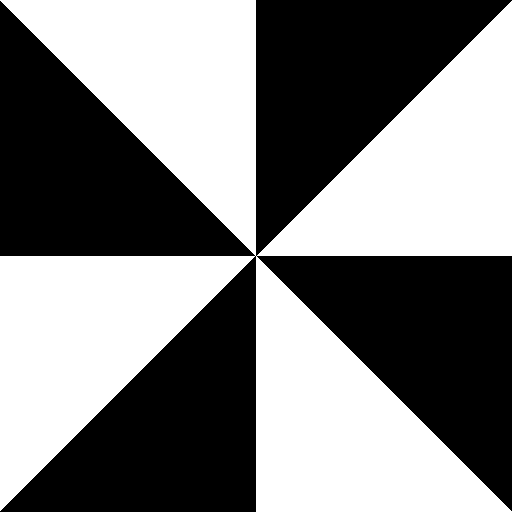}
	\includegraphics[width = 0.11\textwidth]{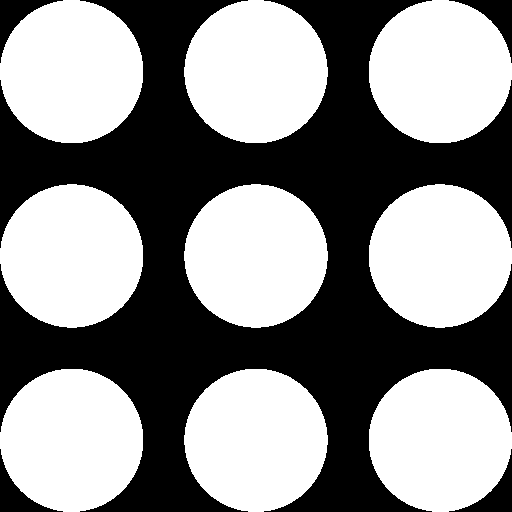}
	\caption{The upper row features images from the Classic Images category, and the bottom row features images from the Shapes category}
	\label{fig:dotmark}
\end{figure}

Utilizing the developed software, we computed both the $L^2$ Kantorovich-Wasserstein distances and the corresponding transportation trajectories on grid sizes up to $(32, 1024, 1024)$. 
The numerical results summarized in \Cref{tab:1024} demonstrate that our method successfully resolves these large-scale problems (using approximately 20 GB of memory during the computational process), thereby validating the superior scalability of the DOT model, the robustness of our SOCP reformulation approach, and the efficiency of Algorithm \ref{alg:socinpalm}.
Supplementary Material (Online Resource 1) contains extended numerical results for alternative classification types; notably, these outcomes align closely with the performance observed in \Cref{tab:1024}.

\begin{table}[ht]
	\centering
	\caption{Numerical results for \Cref{exdotmark} obtained by \Cref{alg:socinpalm}}
	\label{tab:1024}
	\begin{tabular}{ccccccc}
\noalign{\smallskip}\hline\noalign{\smallskip}
Category & Problem  & $\eta_{\rm dot}$ & Duality gap  & Iter & Time (s) 
\\
\hline
Images&[1,2,3,4] to [5,6,7,8]  & 5.91-05 & 1.58-06 &  214,79 & 790.2 
\\ 
&[4,3,7,8] to [5,6,1,2] & 8.36-05 & 1.10-06 &  214,69 & 713.7 \\   \hline
Shapes &[2,6,5,3] to [1,4,8,7]& 9.84-05 & 4.01-05  & 1129,314 & 3153.0 
\\ 
&[7,4,6,8] to [3,2,5,1]& 9.82-05 & 3.59-05  & 1129,314 & 3156.2\\ 
\hline
\end{tabular}

\end{table}

\subsubsection{Dirac measures in a flat domain}
In this experiment, we test the proposed approach for the following instance of the DOT problem from a continuous measure to a Dirac measure. 
\begin{example}[Ex-Dirac]
	\label{example7}
	Consider a continuous density $\rho_0$ and a discrete measure $\rho_1$ on $[0,1]^2$, as illustrated by \Cref{diracimage}, where $\rho_1$ is a sum of $30$ Dirac measures supported at randomly generated locations.
\end{example}

\begin{figure}
	\begin{subfigure}{0.49\textwidth}
		\centering
		\includegraphics[width = .45\textwidth]{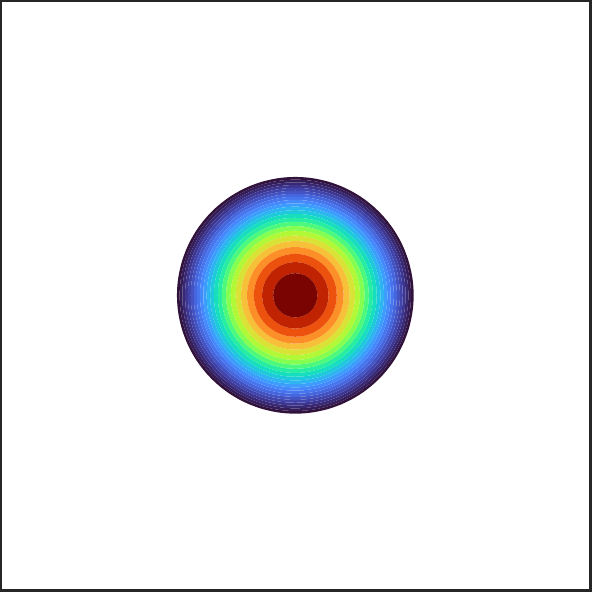}
		\includegraphics[width = .45\textwidth]{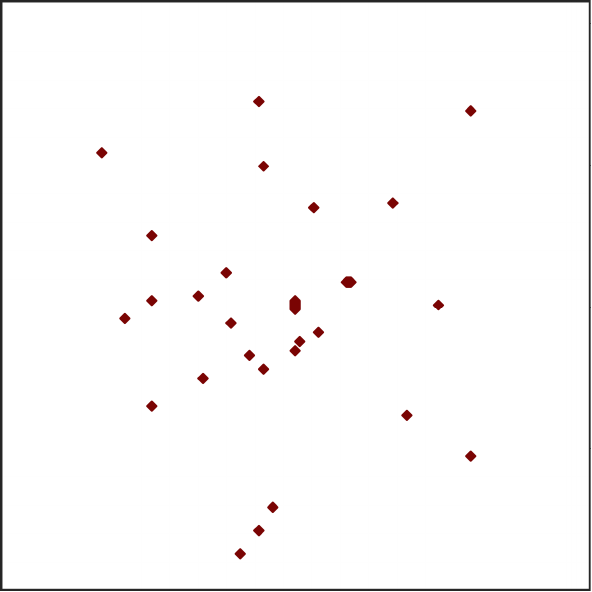}
		\caption{$\rho_0$ and $\rho_1$}
		\label{diracimage}
	\end{subfigure}
	\begin{subfigure}{0.49\textwidth}
		\centering
		\includegraphics[width = .9\textwidth]{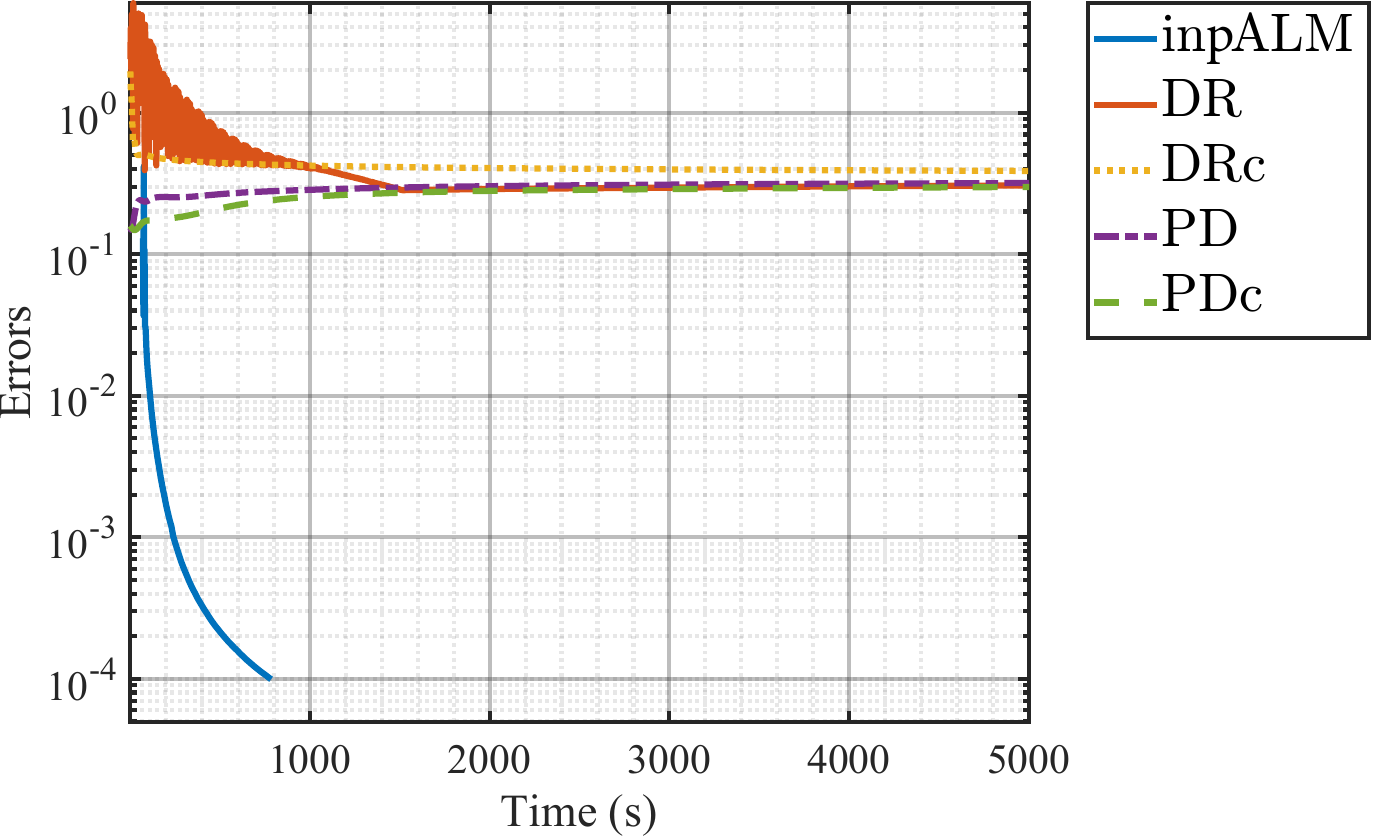}
		\caption{Comparison of efficiency}
		\label{performance}
	\end{subfigure}
	\caption{Measures in \Cref{example7} and numerical results}
\end{figure}

We discretize \Cref{example7} with the grid size $(n_0,n_1,n_2) = (128,128,128)$. The maximum running time is set to 5000 seconds and $\texttt{Tol} = 10^{-4}$.  
\Cref{performance} shows the decay of the relative error for stopping versus computational time, in which the starting time for each algorithm in \Cref{performance} indicates that the algorithm enters the finest grid.
We found that PDc (4294 steps, 5000.4s) performs the best among 
DR, DRc, PD, and PDc, while \Cref{alg:socinpalm} ($769|1169|1809$ steps, 779.90s) has a much more competitive performance.
Moreover, one can also observe from the snapshots of the evolutions computed by the two algorithms in Supplementary Material (Online Resource  1) that inpALM exhibits a relatively more uniform and stable evolution.

\subsubsection{Weighted DOT for irregular domains}
In this part, we extend our numerical investigation beyond rectangular grids to evaluate the performance of the inexact proximal ALM on the transport problem in irregular domains.  
We model these complex geometries by assigning weights to specific sub-regions, penalizing mass transport through the designated areas to force the optimal trajectory to bypass the obstacles.
Here, we introduce only the most essential parts of the procedure, and the routine details are provided in Supplementary Material (Online Resource 1).

Introducing a positive lower semicontinuous penalty function $\kappa(\x)$ defined on $X$ with a uniform positive lower bound, the weighted DOT problem reformulates \eqref{eq:opt-ene} to 
\begin{equation}
	\label{eq:opt-weight}
	\min_{\rho,\bm{v}}
	\left\{
	\int_0^1\int_{X} \frac{1}{2}\|\bm{v}(t,\cdot)\|^2 \kappa(\x) \dd\rho(t,\cdot) \dd t
	\ \Big\vert\ 
	\begin{array}{ll}
		\partial_t\rho+\mathrm{div}_{\x}(\rho\bm{v})=0,  \\
		\rho(0,\cdot)= \rho_0,  \rho(1,\cdot)= \rho_1 
	\end{array}
	\right\}.
\end{equation}
Transportation through points with large $\kappa(\x)$ is more expensive; thus, the optimal trajectory tends to avoid such points.
Following the analysis of \cite[Proposition 5.18]{Santamot}, one can prove that the objective function of \eqref{eq:opt-weight} admits an equivalent form given by
$$
\sup_{a, \bm{b}}\Big\{\int_\Omega a(t,\x)\dd\rho +\int_\Omega \bm{b}(t,\x)\cdot\dd\bm{m}\mid(a,\bm{b})\in  \mathcal{P}_\kappa \Big\},
$$
where  
$\mathcal{P}_\kappa: = \Big\{ (a, \bm b) \in C(\Omega) \times C(\Omega)^D \mid a + \frac{\|\bm b\|^2}{2\kappa(\x)} \le 0 \Big\}$.

Repeating the procedure for getting \eqref{eq:opt-dual}, the dual problem of the weighted DOT problem is given by
\begin{equation}
	\label{eq:weighted-conti}
	\max_{\phi,\p} \{G(\phi) - \delta_{\P_\kappa}(\bm p) \mid \nabla_{t,\x} \phi - \p = 0\},
\end{equation}
where $\bm p : = (a,\b) \in C(\Omega) \times C(\Omega)^D$. 
To discretize \eqref{eq:weighted-conti} using the procedure introduced in \Cref{subsec:dis}, we use a positive weighted vector  $\bm{\omega}\in \mathds{R}^{|\mathcal{G}^s_0|}_+$ to represent the discretization of $1/\kappa(x)$ at $x_{\i}$,
obtaining the discrete weighted DOT problem
\begin{equation}
	\label{eq:opt-weightdualori}
	\min\limits_{\bm{\varphi}, \q} 
	\big\{ 
	\langle \c,\bm{\varphi} \rangle \mid  
	\A \bm{\varphi} =\q\in  \mathds{P}_\omega
	\big\},
\end{equation}
where the weighted constraint set $\mathds{P}_{\omega}$ is defined by
\begin{equation*}
	\begin{array}{ll}
		\mathds{P}_{\omega}:=\Big\{  \q = (\q_0; \ldots; \q_D) \mid  
		\q_d \in  \mathds{R}^{|\G^s_d|}, 
		\\[1mm]
		\qquad
		~\qquad
		(\q_0)_{\i}+\frac{1}{2}\left( \bm{\omega}\odot\mathcal{L}_{\T}\mathcal{L}_{\X}^*(|\q_1|^2;\ldots ;|\q_D|^2)\right)_{\i}\leq 0\quad
		\forall\, \i \in \G^s_0 
		\Big\}.
	\end{array}
\end{equation*}
The discrete parabolic set $\mathds{P}_{\omega}$ preserves the same second-order cone structure as the set $\mathds{P}$. Hence, the SOCP reformulation approach also applies to \eqref{eq:opt-weightdualori}, and \Cref{alg:socinpalm} is still applicable, convergent, and admits the same implementation as \Cref{alg:inpalm-exp}.

In our numerical experiment, we construct the following examples to test the SOCP reformulation approach for the weighted DOT problem.  We define the obstacles by assigning a sufficiently small positive value to $\omega_{\i}$ in the infeasible regions and setting $ \omega_{\i} =1$ elsewhere.  
\begin{example}[Ex-heart]
	\label{example-heart}
	Transport between two normalized densities  $\rho_0$ and $\rho_1$
	in a heart-shaped domain with a heart-shaped obstacle, illustrated by \Cref{heartimage}. 
	Within the infeasible regions, the weights are set to $\omega_{i_0,i_1,i_2} = 10^{-6}$. 
\end{example}

\begin{example}[Ex-maze]
	\label{example6}
	Transport between two normalized densities  $\rho_0$ and $\rho_1$ in a domain with obstacles, illustrated by \Cref{mazeimage}. 
	Within the infeasible regions, the weights are set to $\omega_{i_0,i_1,i_2} = 10^{-6}$. 
\end{example}

\begin{figure} 
	\begin{subfigure}[b]{0.49\textwidth}
		\centering
		\includegraphics[width=0.45\linewidth]{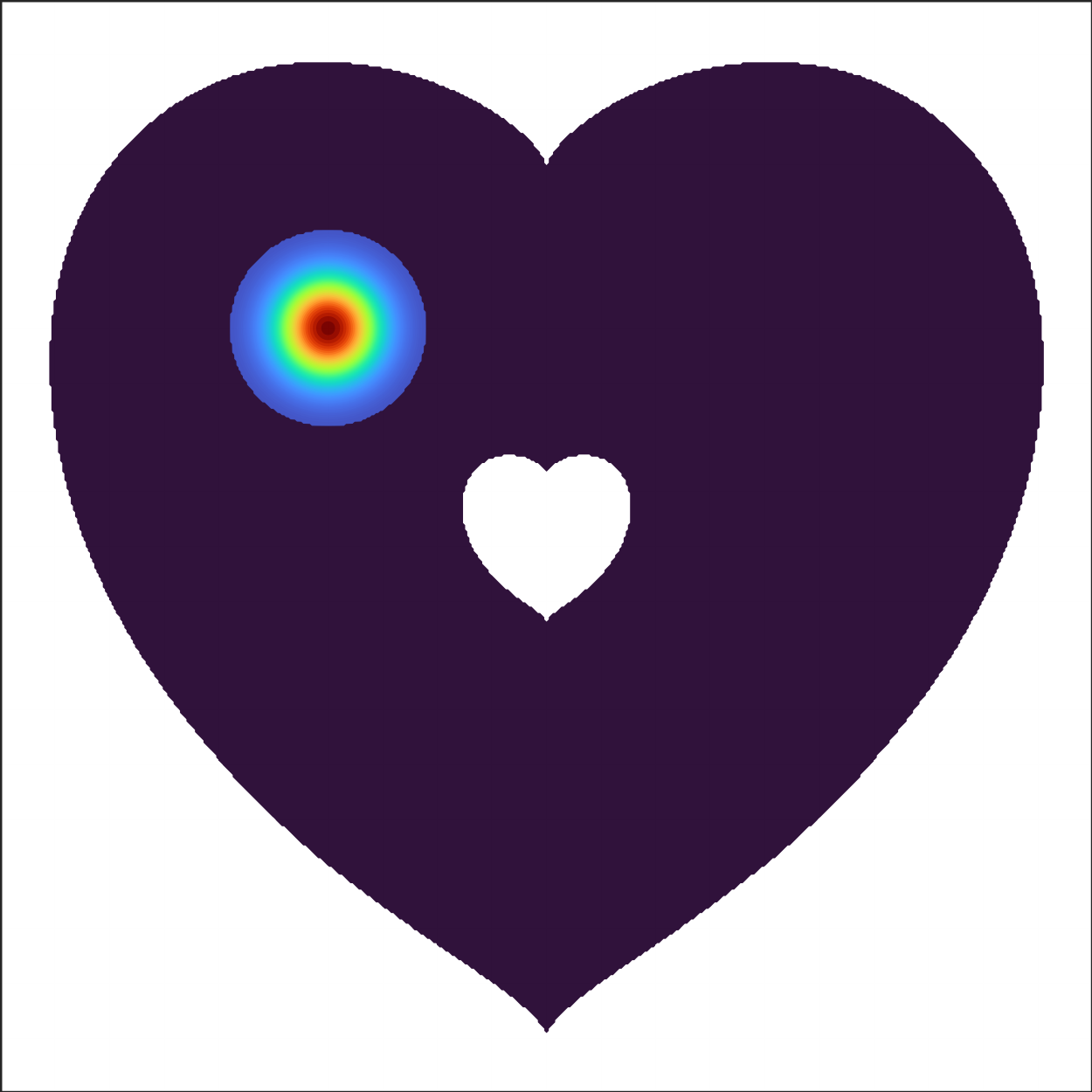}
		\includegraphics[width=0.45\linewidth]{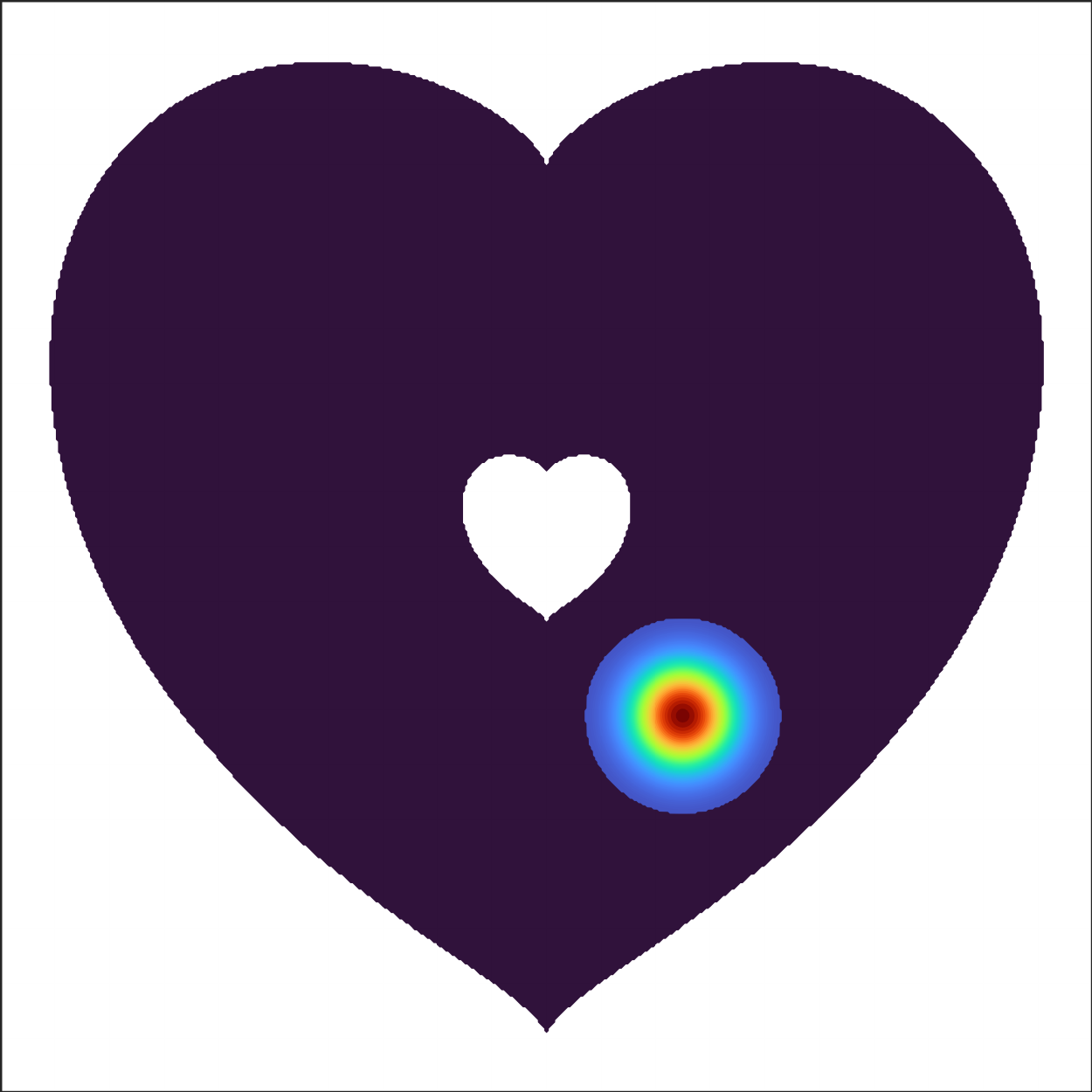}
		\caption{$\rho_0$ and $\rho_1$ in \Cref{example-heart}}
		\label{heartimage}
	\end{subfigure}
	\begin{subfigure}[b]{0.49\textwidth}
		\centering
		\includegraphics[width = .45\textwidth]{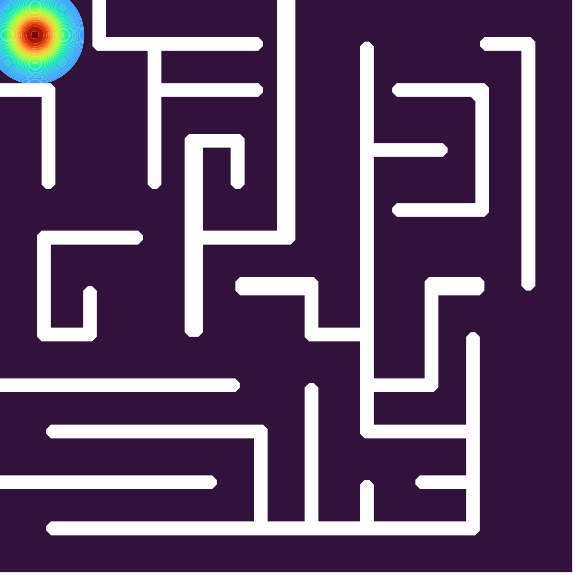}
		\includegraphics[width = .45\textwidth]{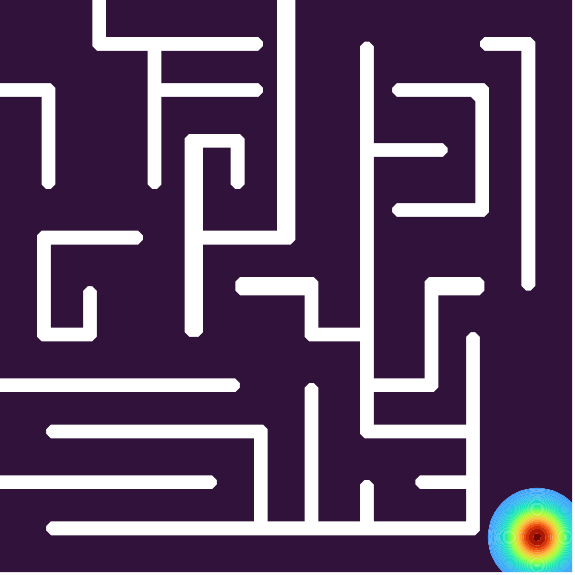}  
		\caption{$\rho_0$ and $\rho_1$ in \Cref{example6}}
		\label{mazeimage}
	\end{subfigure}
	\caption{Densities in \Cref{example-heart} and 
		\Cref{example6}}
\end{figure}

\begin{figure}
	\begin{subfigure}[b]{0.49\textwidth}
		\centering
		\includegraphics[width= .9\linewidth]{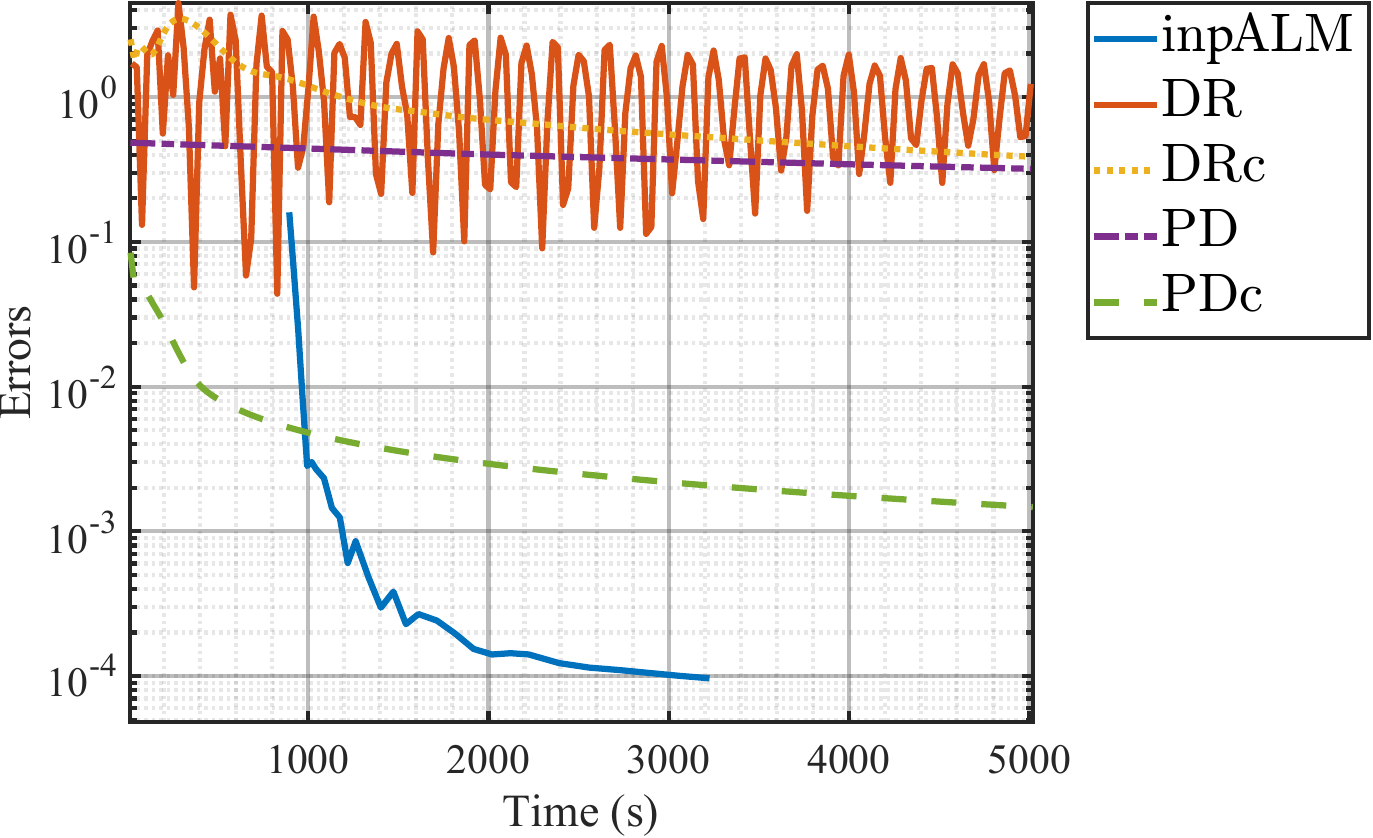}
		\caption{\Cref{example-heart}}
	\end{subfigure}
	\begin{subfigure}[b]{0.49\textwidth}
		\centering
		\includegraphics[width= .9\linewidth]{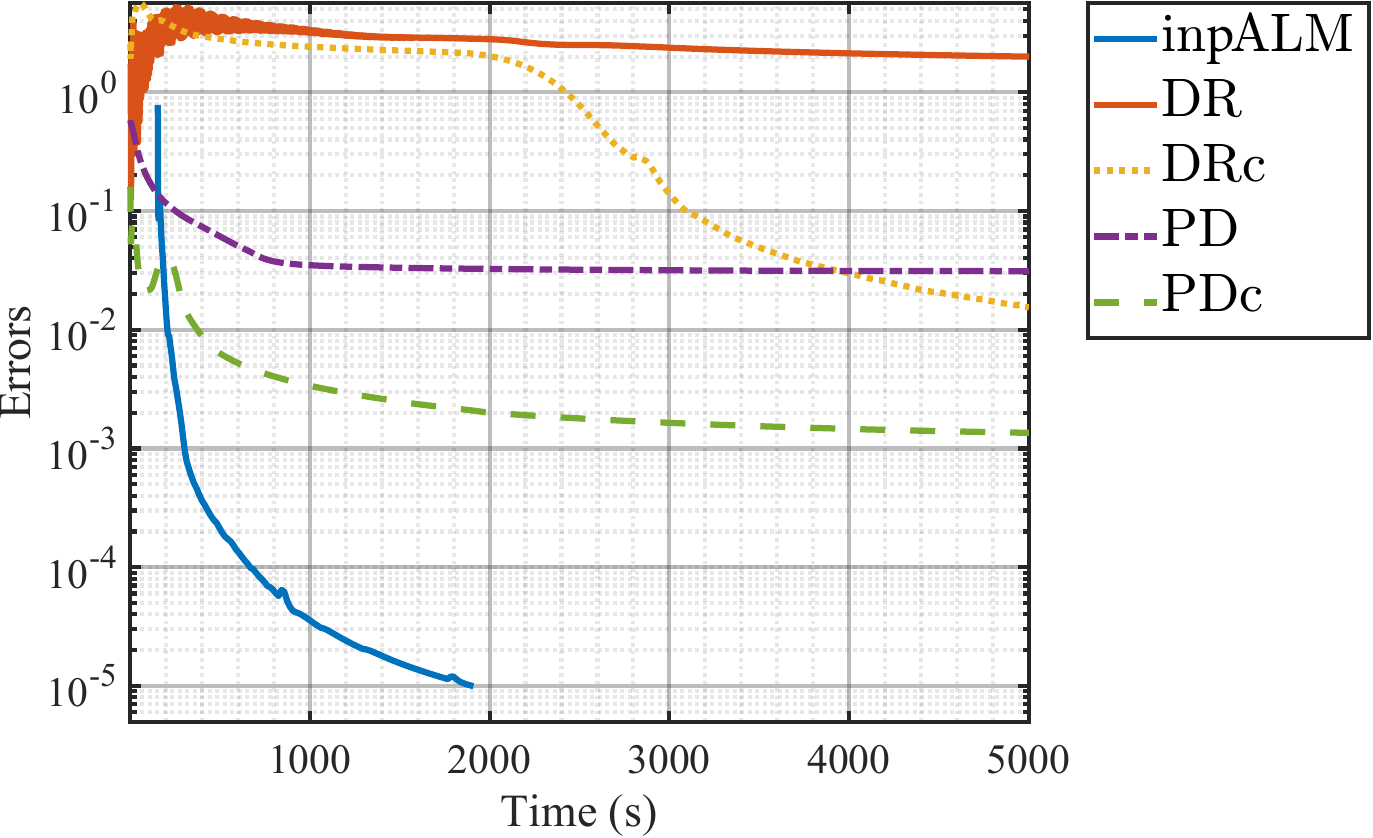}
		\caption{\Cref{example6}}
	\end{subfigure}
	\caption{Numerical results on comparing the efficiency for \Cref{example-heart} and \Cref{example6}}
	\label{fig:result89}
\end{figure}

We discretize \Cref{example-heart} on a grid with $(n_0,n_1,n_2) = (128,512,512)$ and use $\texttt{Tol} = 10^{-4}$.  
For \Cref{example6}, we set $(n_0,n_1,n_2) = (128,128,128)$ and a tighter tolerance  $\texttt{Tol} = 10^{-5}$. 
A uniform maximum runtime of 5000 seconds is imposed for both examples.
\Cref{tab:weighted} summarizes the numerical performance of \Cref{alg:socinpalm} in comparison with {PDc}, which was identified as the most competitive among the baseline algorithms ({DR}, {DRc}, {PD}, and {PDc}).
For all the tested algorithms, the decay of the KKT residual relative to computational time is illustrated in \Cref{fig:result89} (the starting time indicates entering the finest grid). 
Notably, the proposed inexact proximal ALM successfully achieves the required precision for both problems, while the other algorithms fail to reach the prescribed tolerance within the maximum time limit. 
Furthermore, as demonstrated by the density evolution snapshots in Supplementary Material (Online  Resource  1), the inexact proximal ALM appears to maintain a concentrated density distribution during the transportation process, which is more reasonable in the sense of OT.

\begin{table}[ht]
	\centering
	\caption{Numerical results for \Cref{example-heart} and \Cref{example6}}
	\label{tab:weighted}
	\begin{tabular}{cccccc}
\noalign{\smallskip}\hline\noalign{\smallskip}
 Problem    &Algorithm & $\eta_{\text{dot}}$ &  Iter & Time (s) \\ \hline
 Ex-heart(128-512-512)  & inpALM & 9.66-05 & 1169,339 & \bf 3347.0\\
   & PDc  & 1.57-03 & 249 & 5000.7 \\ \hline 
  Ex-maze(128-128-128)   & inpALM & 9.94-06 & 2849,4449 & \bf 1912.0\\
 &  PDc  & 1.34-03& 4311  & 5000.9 \\ 
\hline\noalign{\smallskip}
\end{tabular}

\end{table}

In summary, the four categories of numerical evidence presented above suggest that the proposed SOCP reformulation provides an effective framework for addressing DOT problems on staggered grids. In particular, the inexact proximal ALM (Algorithm \ref{alg:socinpalm}) appears to be a competitive choice for solving these reformulated optimization problems. The proposed approach demonstrates the potential to handle large-scale discretized OT problems and DOT problems in irregular domains. Furthermore, it exhibits numerical robustness in the presence of Dirac measures.

\section{Conclusions}
\label{sec:conclu}
This paper presents an efficient numerical framework for solving DOT problems on staggered grids by leveraging an SOCP reformulation. 
Based on establishing the equivalence between the discretized DOT problem and a linear SOCP, we used an inexact decomposition-based proximal ALM to solve the SOCP reformulation. We developed an open-source software package that encourages further research and practical applications. 
Extensive numerical experiments demonstrate that our method consistently achieves high precision and superior computational efficiency across diverse and challenging scenarios. 
Beyond the scope of this work, the underlying SOCP methodology provides a versatile foundation for addressing a broader class of optimization problems in Wasserstein spaces, a direction we intend to explore in future research.
Moreover, it is also of interest to see whether the numerical techniques designed in this work can be integrated with grid-free frameworks, such as \cite{dl-geo,dl-ot}, for solving high-dimensional DOT problems with both efficiency and precision.

\bigskip
\noindent
\small \textbf{Funding} 
This work was supported by
the National Key R\&D Program of China (No. 2021YFA001300),
the National Natural Science Foundation of China (No. 12271150).

\medskip
\noindent
\textbf{Author Contributions}
All authors contributed equally to the study conception, design, writing, and numerical experiments. 
All authors read and approved the final manuscript.

\medskip
\noindent
\textbf{Data availability}
The datasets generated during and/or analyzed during the current study are available in the GitHub repository (\url{https://github.com/chlhnu/DOT-SOCP}). 
\section*{Declarations}

\noindent
\textbf{Competing Interests} 
The authors have no relevant financial or non-financial interests to disclose.


\begin{thebibliography}{}
	
	\bibitem{soc}
	Alizadeh, F., Goldfarb, D.:
	Second order cone programming. 
	Math. Program. {\bf 95}, 3--51 (2003)
	
	
	\bibitem{cfd}
	Aref, H., Balachandar, S.:
	A First Course in Computational Fluid Dynamics. 
	Cambridge University Press, Cambridge (2017)
	
	
	\bibitem{fista}
	Beck, A., Teboulle, M.:
	A fast iterative shrinkage-thresholding algorithm for linear inverse problems. 
	SIAM J. Imaging Sci. {\bf 2}(1), 183--202 (2009)
	
	
	\bibitem{bb00}
	Benamou, J.-D., Brenier, Y.:
	A computational fluid mechanics solution to the Monge-Kantorovich mass transfer problem. 
	Numer. Math. {\bf 84}(3), 375--393 (2000)
	
	
	
	\bibitem{bb15}
	Benamou, J.-D., Carlier, G.:
	Augmented Lagrangian methods for transport optimization, mean field games and degenerate elliptic equations. 
	J. Optim. Theory Appl. {\bf 167}, 1--26 (2015)
	
	
	\bibitem{bonnans23}
	Bonnans, J.F., Lavigne, P., Pfeiffer, L.:
	Discrete potential mean field games: duality and numerical resolution.
	Math. Program. {\bf 202}, 241--278 (2023)
	
	
	\bibitem{jose22}
	Carrillo, J.A., Craig, K., Wang, L., Wei, C.Z.:
	Primal dual methods for Wasserstein gradient flows.
	Found. Comput. Math. {\bf 22}, 389--443 (2022)
	
	
	\bibitem{chambolle}
	Chambolle, A., Pock, T.:
	A first-order primal-dual algorithm for convex problems with applications to imaging.
	J. Math. Imaging Vis. {\bf 40}, 120--145 (2011)
	
	
	\bibitem{clcoap}
	Chen, L., Sun, D.F., Toh, K.-C.:
	A note on the convergence of ADMM for linearly constrained convex optimization problems.
	Comput. Optim. Appl. {\bf 66}, 327--343 (2017)
	
	\bibitem{cl17}
	Chen, L., Sun, D.F., Toh, K.-C.:
	An efficient inexact symmetric Gauss-Seidel based majorized ADMM for high-dimensional convex composite conic programming. 
	Math. Program. {\bf 161}(1), 237--270 (2017)
	
	\bibitem{cl21}
	Chen, L., Li, X.D., Sun, D.F., Toh, K.-C.:
	On the equivalence of inexact proximal ALM and ADMM for a class of convex composite programming.
	Math. Program. {\bf 185}, 111--161 (2021)
	
	\bibitem{chensocpreview2025}
	Chen, L., Yang, L.X., Zhu, J.Y.:
	A survey on some recent advances in linear and nonlinear second-order cone programming.
	J. Oper. Res. Soc. China (2025), \url{https://doi.org/10.1007/s40305-025-00643-7}
	
	
	
	\bibitem{davis}
	Davis, D., Yin, W.:
	Convergence rate analysis of several splitting schemes.
	In Splitting Methods in Communication, Imaging, Science, and Engineering,
	Glowinski, R., Osher, S.J., Yin, W., eds., Springer, pp. 115--163 (2016)
	
	
	\bibitem{fazel13}
	Fazel, M., Pong, T.K., Sun, D.F., Tseng, P.:
	Hankel matrix rank minimization with applications to system identification and realization.
	SIAM J. Matrix Anal. Appl. {\bf 34}(3), 946--977 (2013)
	
	\bibitem{flamary16}
	Flamary, R., Févotte, C., Courty, N., Emiya, V.:
	Optimal spectral transportation with application to music transcription.
	In: Conference on Neural Information Processing Systems, pp. 703--711 (2016)
	
	\bibitem{fukushima}
	Fukushima, M., Luo, Z.Q., Tseng, P.:
	Smoothing functions for second-order cone complementarity problems.
	SIAM J. Optim. {\bf 12}(2), 436--460 (2002)
	
	\bibitem{gabay76}
	Gabay, D., Mercier, B.:
	A dual algorithm for the solution of nonlinear variational problems via finite element approximation.
	Comput. Math. Appl. {\bf 2}(1), 17--40 (1976)
	
	\bibitem{galichon16}
	Galichon, A.:
	Optimal Transport Methods in Economics.
	Princeton University Press, Princeton (2016)
	
	\bibitem{glowinski75}
	Glowinski, R., Marroco, A.:
	Sur l'approximation, par \'el\'ements finis d'ordre un, et la r\'esolution, par p\'enalisation-dualit\'e d'une classe de probl\`emes de Dirichlet non lin\'eaires.
	Revue fran\cedilla{c}aise d'automatique, Informatique, Recherche  Op\'erationnelle. Analyse Num\'erique {\bf 9}(R2), 41--76 (1975)
	
	
	\bibitem{matrixcomp}
	Golub, G., Van Loan, C.:
	Matrix Computations, 4th edn.
	Johns Hopkins University Press,
	Baltimore (2013)
	
	
	
	\bibitem{hestenes}
	Hestenes, M.:
	Multiplier and gradient methods.
	J. Optim. Theory Appl. {\bf 4}(5), 303--320 (1969)
	
	
	\bibitem{hd24}
	Hou, D., Liang, L., Toh, K.-C.:
	A sparse smoothing Newton method for solving discrete optimal transport problems.
	ACM Trans. Math. Softw. {\bf 50}(3) (2024)
	
	
	\bibitem{Kantorovich42}
	Kantorovich, L.V.:
	On the transfer of masses (in Russian).
	Dokl. Akad. Nauk {\bf 37}(2), 227--229 (1942)
	
	
	
	\bibitem{lam}
	Lam, X.Y., Marron, J.S., Sun, D.F., Toh, K.-C.:
	Fast algorithms for large-scale generalized distance weighted discrimination.
	J. Comput. Graph. Statist. {\bf 27}(2), 368--379 (2018)
	
	
	
	\bibitem{hugo18}
	Lavenant, H., Claici, S., Chien, E., Solomon, J.:
	Dynamical optimal transport on discrete surfaces.
	ACM Trans. Graph. {\bf 37}(6), 1--16 (2018)
	
	\bibitem{hugo20}
	Lavenant, H.:
	Unconditional convergence for discretizations of dynamical optimal transport.
	Math. Comput. {\bf 90}, 739--786 (2021)
	
	
	\bibitem{fast-comp}
	Li, G., Chen, Y.X., Huang, Y., Chi, Y.J., Poor, H.V., Chen, Y.X.:
	Fast computation of optimal transport via entropy-regularized extragradient methods.
	SIAM J. Optim. {\bf 35}(2), 1330--1363 (2025)
	
	
	
	
	\bibitem{lxd16}
	Li, X.D., Sun, D.F., Toh, K.-C.:
	A Schur complement based semi-proximal ADMM for convex quadratic conic programming and extensions.
	Math. Program. {\bf 155}, 333--373 (2016)
	
	\bibitem{lxdsgs}
	Li, X.D., Sun, D.F., Toh, K.-C.:
	A block symmetric Gauss-Seidel decomposition theorem for convex composite quadratic programming and its application.
	Math. Program. {\bf 175}, 395--418 (2019)
	
	\bibitem{ll21}
	Liang, L., Sun, D.F., Toh, K.-C.:
	An inexact augmented Lagrangian method for second-order cone programming with applications.
	SIAM J. Optim. {\bf 31}(3), 1748--1773 (2021)
	
	
	\bibitem{ljl21}
	Liu, J.L., Yin, W.T., Li, W.C., Chow, Y.T.:
	Multilevel optimal transport: a fast approximation of Wasserstein-1 distances.
	SIAM J. Sci. Comput. {\bf 43}(1), A193--A220 (2021)
	
	
	
	\bibitem{dl-geo}
	Liu, S., Ma, S.J., Chen, Y.X., Zha, H.Y., Zhou, H.M.:
	Learning high dimensional Wasserstein geodesics. (2021),
	\href{https://arxiv.org/pdf/2102.02992}{arXiv: 2102.02992}
	
	
	\bibitem{lyl21}
	Liu, Y.L., Xu, Y.B., Yin, W.T.:
	Acceleration of primal-dual methods by preconditioning and simple subproblem procedures.
	J. Sci. Comput. {\bf 86}(2), 1--34 (2021)
	
	
	
	
	
	\bibitem{monge}
	Monge, G.:
	M\'emoire sur la th\'eorie des d\'eblais et des remblais.
	M\'emoires de l'Acad\'emie Royale des Sciences, Paris (1781)
	
	\bibitem{monteiro00}
	Monteiro, R.D.C., Tsuchiya, T.:
	Polynomial convergence of primal-dual algorithms for the second-order cone program based on the MZ-family of directions.
	Math. Program. {\bf 88}, 61--83 (2000)
	
	\bibitem{monteiro}
	Monteiro, R.D.C, Svaiter, B.F.:
	Iteration-complexity of block-decomposition algorithms and the alternating direction method of multipliers.
	SIAM J. Optim. {\bf 23}(1), 475--507 (2013)
	
	
	\bibitem{nata21}
	Natale, A., Todeschi, G.:
	Computation of optimal transport with finite volumes.
	ESAIM-Math. Model. Num. {\bf 55}, 1847--1871 (2021)
	
	\bibitem{papa14}
	Papadakis, N., Peyr\'{e}, G., Oudet, E.:
	Optimal transport with proximal splitting.
	SIAM J. Imaging Sci. {\bf 7}(1), 212--238 (2014)
	
	
	\bibitem{peyre}
	Peyr\'{e}, G.,  Cuturi, M.:
	Computational optimal transport: with applications to data science.
	Found. Trends Mach. Learn. {\bf 11}, 355--607 (2019)
	
	\bibitem{powell}
	Powell, M.:
	A method for nonlinear constraints in minimization problems.
	In: Fletcher, R.(ed.) Optimization, pp. 283--298. 
	Academic Press, New York (1969)
	
	\bibitem{rock-1}
	Rockafellar, R.T.:
	Convex Analysis. Princeton University Press, Princeton (1970)
	
	\bibitem{rock-alm}
	Rockafellar, R.T.:
	Augmented Lagrangians and applications of the proximal point algorithm in convex programming.
	Math. Oper. Res. {\bf 1}, 97--116 (1976)
	
	
	\bibitem{saad}
	Saad, Y.: Iterative Methods for Sparse Linear Systems, 2nd edn.
	Society for Industrial and Applied Mathematics, Philadelphia (2003)
	
	\bibitem{Santamot}
	Santambrogio, F.: Optimal Transport for Applied Mathematicians. Calculus of Variations, PDEs, and Modeling. Birkh\"{a}user, Cham (2015)
	
	\bibitem{dotmark}
	Schrieber, J., Schuhmacher, D., Gottschlich, C.:
	DOTmark--a benchmark for discrete optimal transport.
	IEEE Access {\bf 5}, 271--282 (2017)
	
	\bibitem{accadmm}
	Sun, D.F., Yuan, Y.C., Zhang, G.J., Zhao, X.Y.: Accelerating preconditioned ADMM via degenerate proximal point mappings.
	SIAM J. Optim. {\bf 35}(2), 1165--1193 (2025)
	
	
	\bibitem{tang23}
	Tang, T.Y., Toh, K.-C.:
	Self-adaptive ADMM for semi-strongly convex problems.
	Math. Program. Comput. {\bf 16}, 113--150 (2024)
	
	\bibitem{tart16}
	Tartavel, G., Peyré, G., Gousseau, Y.:
	Wasserstein loss for image synthesis and restoration.
	SIAM J. Imaging Sci. {\bf 9}(4), 1726--1755 (2016)
	
	\bibitem{sdpt3}
	Tutuncu, R.H., Toh, K.-C., Todd, M.J.:
	Solving semidefinite-quadratic-linear programs using SDPT3.
	Math. Program. {\bf 95}, 189--217 (2003)
	
	\bibitem{villani}
	Villani, C.:
	Topics in Optimal Transportation.
	Graduate studies in mathematics, American Mathematical Society, Providence (2003)
	
	
	\bibitem{dl-ot}
	Wan, W., Zhang, Y.J., Bao, C.L., Dong, B., Shi, Z.Q.:
	A scalable deep learning approach for solving high-dimensional dynamic optimal transport.
	SIAM J. Sci. Comput. {\bf 45}(4), B544--B563 (2023)
	
	
	\bibitem{wrd25}
	Wang, R., Zhang, Z.:
	Quadratic-form optimal transport.
	Math. Program. (2025), \url{https://doi.org/10.1007/s10107-025-02282-5}
	
	\bibitem{xiao2018}
	Xiao, Y.H., Chen, L., Li, D.H.: 
	A generalized alternating direction method of multipliers with semi-proximal terms for convex composite conic programming.  
	Math. Program.  Comput. {\bf 10}(4), 533--555 (2018)
	
	
	\bibitem{yl24}
	Yang, L., Liang, L., Chu, T., Toh, K.-C.:
	A corrected inexact proximal augmented Lagrangian method with a relative error criterion for a class of group-quadratic regularized optimal transport problems.
	J. Sci. Comput. {\bf 99}(3), 1--36 (2024)
	
	
	\bibitem{yjjmanifold}
	Yu, J.J., Lai, R.J., Li, W.C., Osher, S.:
	Computational mean-field games on manifolds.
	J. Comput. Phys. {\bf 484}, 112070 (2023)
	
	\bibitem{yjj24}
	Yu, J.J., Lai, R.J., Li, W.C., Osher, S.:
	A fast proximal gradient method and convergence analysis for dynamic mean field planning.
	Math. Comput. {\bf 93}, 603--642 (2024)
	
	\bibitem{hot}
	Zhang, G., Gu, Z., Yuan, Y., Sun, D.F.:
	HOT: an efficient Halpern accelerating algorithm for optimal transport problems.
	IEEE Trans. Pattern Anal. Mach. Intell. {\bf 47}(8), 6703--6714 (2025)
	
	
	
	
\end{thebibliography}
\end{document}